\DeclareMathOperator{\Id }{Id}
\DeclareMathOperator{\D}{div}
\newtheorem{theorem}{Theorem}[section]
\newtheorem{lemma}[theorem]{Lemma}
\newtheorem{proposition}[theorem]{Proposition}
\newtheorem{definition}[theorem]{Definition}
\newtheorem{corollary}[theorem]{Corollary}
\newtheorem{remark}[theorem]{Remark}
\def \TT  {\mathbb{T}} 
\def \RR {\mathbb{R}}  
\def \NN {\mathbb{N}}  
\def \ZZ {\mathbb{Z}}  
\def \p {\partial}
\def \g {\gamma}
\def \k {\kappa}
\def \ep {\epsilon}
\def \om {\omega}
\def \T {\mathbf{T}}
\def \N {\mathbf{N}}
\def \H {\mathcal{H}}
\numberwithin{equation}{section}
\begin{document}

\title{illposedness of $C^{2}$ vortex patches}

\author{Alexander Kiselev}

\address{Department of Mathematics, Duke University, Durham, NC 27708, USA.}
\email{kiselev@math.duke.edu}

\author{Xiaoyutao Luo}

\address{Department of Mathematics, Duke University, Durham, NC 27708, USA.}

\email{xiaoyutao.luo@duke.edu}


\keywords{2D Euler equations, vortex patches, curvature, wellposedness, illposedness}
\date{\today}

 \begin{abstract}
It is well known that vortex patches are wellposed in  $C^{1,\alpha}$  if $0<\alpha <1$. In this paper, we prove the illposedness of $C^{2}$ vortex patches. The setup is to consider the vortex patches in Sobolev spaces $W^{2,p}$ where the curvature of the boundary is $L^p$ integrable.  In this setting, we show the persistence of $W^{2,p}$ regularity when $1<p <\infty$ and construct $C^{2}$ initial patch data for which the curvature of the patch boundary becomes unbounded immediately for $t>0$,
though it regains $C^2$ regularity precisely at all integer times without being time periodic.
The key ingredient is the evolution equation for the curvature, the dominant term in which turns out to be linear and dispersive.
 \end{abstract}

\date{\today}

\maketitle
\section{Introduction}

\subsection{Vortex patches}

Vortex patches are an important family of weak solutions to the 2D Euler equations. We recall that the 2D Euler equation in the vorticity form is given by
\begin{equation}\label{eq:euler_vorticity}
\p_t \omega + (v \cdot \nabla ) \omega =0.
\end{equation}
At each time $t$, the velocity field $u$ is determined by the Biot-Savart law
\begin{equation}\label{eq:euler_biot_savart}
v(x, t) =  K \star \om (x,t) : =  \frac{1}{2\pi }\int_{\RR^2} \frac{(x-y)^\perp }{|x - y|^2} \omega(y, t) \, dy,
\end{equation}
where $x^\perp = (-x_2, x_1)$ for any $x\in \RR^2$.

A vortex patch is a solution to \eqref{eq:euler_vorticity} of the form
\begin{equation}\label{eq:euler_vortex_patch}
\omega(t,x) = \chi_{\Omega(t)},
\end{equation}
where $\chi_{\Omega(t)}$ denotes the characteristic function of a connected bounded domain 
$\Omega(t) \subset \RR^2$ that evolves according to \eqref{eq:euler_biot_savart}. Note that in the literature, the term vortex patch solution often refers to a solution of the form $ \omega = \sum_{1\leq i\leq N} \theta_i(x, t) \chi_{\Omega_i(t)}$ with $ \Omega_i$ being mutually disjoint bounded domains and $\theta_i(x,t)$ being the profiles of vorticity. In this paper, we consider only the case of a single patch in $\RR^2$ with a constant vorticity $2\pi$.

Given an initial patch data $\omega_0(x) = \chi_{\Omega_0}(x)$, there exists the unique patch solution $\omega  = \chi_{\Omega } $ thanks to the Yudovich theory \cite{MR0163529} of $L^1 \cap L^\infty$ weak solutions. The Yudovich theory only implies that $\Omega(t)$ remains a bounded domain whose area is constant in time, but does not address the regularity of $\Omega(t)$, which in this case refers to the smoothness of the patch boundary $\p\Omega$. The question of whether the smoothness of the patch boundary breaks down in finite time was a subject of debate \cite{doi:10.1063/1.857353,MR1050012} in numerical analysis. However, this controversy was settled by Chemin \cite{MR1235440} in 1993 who proved that the patch boundary remains smooth for all times if it is smooth initially.

A key step towards understanding global regularity for smooth vortex patches is the global wellposedness of patches in $C^{1,\alpha}$ for $0< \alpha < 1$ (see e.g. \cite{MR1867882}). The restriction $0<\alpha <1$ and the recent illposedness result of Bourgain-Li \cite{MR3320889} for the 2D and 3D Euler equations in the smooth setting for $\om_0 \in C^{k}$ with integer $k\geq 1$ suggest that the patch problem may also be ill-posed in $C^{k}$ for integer $k\geq 1$. The main purpose of this paper is twofold.
First, we confirm this conjecture by showing that the vortex patch problem is indeed illposed in $C^2$ or $C^{1,1}$.
Secondly, our approach to this question involves an analysis of the evolution equation for the curvature of the patch.
Essentially, the equations for intrinsic geometric parameters of the patch, the curvature and arc-length, can be viewed as an alternative formulation
for the patch evolution problem. Such type of reformulation using intrinsic quantities has been used before in the context of fluids mechanics in a variety of models~\cite{Mullins56,HLSjcp94,HKISjcp98}, and we refer interested readers to references therein.
In our case, the resulting equations do not contain information on patch orientation or position
but otherwise recapture the patch precisely (and the former details can be recovered by solving simple ODEs if needed).
Interestingly, using this reformulation allows us to see the illposedness in $C^2$ on the conceptual level quite directly (notwithstanding the technical estimates one needs to carry out). We believe that this approach can be useful in further analysis of finer features of patch dynamics. 

\subsection{Historical development}
As we mentioned above, the study of vortex patch dynamics reduces to the analysis of the patch boundary evolution. If the patch boundary is at least piece-wise $C^1$, then one can derive a 1D equation for the parametrization of the boundary. This equation, known as the contour dynamics equation (CDE), first appeared in \cite{MR524163} and \cite{MR861488}:
\begin{equation}\tag{CDE}\label{eq:CDE}
    \p_t \gamma (\xi,t) = \int_{\TT} \dot{\g}(\eta,t )\ln |\g (\xi,t) - \g(\eta,t)|  \, d\eta,
\end{equation}
where $\gamma :\TT\times [0,T]  \to \RR^2$ is a parametrization of the patch boundary at each time $t \in  [0,T] $.
In this paper, $\TT := \RR / 2 \pi\ZZ$ denotes the $1$-dimensional torus which we identify with an interval of length $2\pi$ with periodic boundary condition.
The local wellposedness of \eqref{eq:CDE} in the framework of $C^{k,\alpha}$ patches was proved by Bertozzi in \cite{MR2686124} (see also \cite{MR1867882}).

After the work of Chemin, several different proofs of global regularity for Euler patches appeared:  by Serfati \cite{MR1270072} and by Bertozzi and Constantin \cite{MR1207667} (see also more recent \cite{verdera2021regularity} for patches of other active scalar equations).
We should emphasize that these works consider the patch regularity problem in the $C^{k,\alpha}$ setting for $k\in \NN$ and $0<\alpha <1$, and more importantly, the 2D dynamics of the patches are used in an essential way. In other words, we are not aware of
a proof of global regularity for vortex patch using only the contour dynamics \eqref{eq:CDE}.

Our interest in the ill-posedness patch problem was partly motivated by the recent significant developments on the Cauchy problem of the Euler equations (both in $2D$ and $3D$ in the non-patch setting) with initial data in integer H\"older spaces $C^k$ or in critical Sobolev spaces~
\cite{MR3359050,
MR3320889,
MR4065655,
MR3451386}.
In particular, it has been shown by Bourgain and Li \cite{MR3359050,MR3320889} that the Euler equations (in $2D$ and $3D$) are illposed for vorticity in critical Sobolev spaces $\om_0 \in W^{\frac{d}{p},p}$ for $1\leq p< \infty$ and integer H\"older spaces $\om_0 \in C^{k}$ with $ k \geq 0$.
In  contrast to the illposeness results in the smooth setting, the following question has remained open for patches:
\begin{center}
    \emph{Is the vortex patch problem illposed
    in $C^k$ or $C^{k-1,1}$ with integer $k\geq 1$?}
\end{center}
Even whether Sobolev regularity of the patch boundary, say $W^{k,p}$ for $k \geq 2$, persists globally or locally in time was not known.
In this paper, we show global well-posedness in $W^{2,p}$ with $1 < p<\infty,$ and prove that the vortex patch problem is ill-posed in $C^2$
(and also $C^{1,1}$).

Another motivation for our work are beautiful simulations by Scott and Dritschel \cite{SD1,SD2} on singularity formation for SQG patches.
Rigorously, singularity formation for $\alpha$-SQG patches has only been proved for small $\alpha$ in the half-plane \cite{MR3549626,MR4235799}.
We think that curvature/arc-length equations can be useful for further analysis of singular scenarios in the $\alpha-$SQG patch without
boundary setting.

\subsection{Main results}

We now present the main results of this paper. To streamline the presentation, we introduce the definition of $C^{k,\alpha}$ (and  $W^{k,p}$) domains and refer to the domain $\Omega(t)$ as a patch solution.

\begin{definition}
Let $\Omega \subset \RR^2 $ be a simply-connected bounded domain. We say $ \Omega$ is  $C^{k,\alpha}$ (respectively $W^{k,p}$) if the arc-length parametrization $\gamma :\frac{L}{2\pi} \TT \to \RR^2 $ satisfies $\gamma \in C^{k,\alpha}( \frac{L}{2\pi} \TT) $ (resp. $W^{k,p}( \frac{L}{2\pi} \TT) )$ where $L$ is the length of $ \gamma  $
and $\frac{L}{2\pi} \TT = \RR / L \ZZ$.

We say $ \Omega (t) $ is a $C^{k,\alpha}$ (respectively $W^{k,p}$) patch solution (or vortex patch) on a time interval $I\subset \RR$ if
\begin{itemize}
    \item The characteristic function $\chi_{\Omega(t)}$ is a solution of \eqref{eq:euler_vorticity}--\eqref{eq:euler_vortex_patch};
    \item The domain $\Omega(t) $ is $C^{k,\alpha}$ (respectively $W^{k, p}$) for all $t \in I$.
\end{itemize}
\end{definition}

Our first main result is the wellposedness of $W^{2,p}$ patch solutions, which is needed to show the $C^2$ illposedness.
\begin{theorem}\label{thm:main_wellposedness}
Let $1<p <\infty$ and $\Omega_0 \subset \RR^2 $ be a $W^{2,p}$ domain. Then the unique patch solution $ \Omega(t) $ with initial data $ \Omega_0 $  is a $W^{2, p}$ patch solution for all $t>0$.
\end{theorem}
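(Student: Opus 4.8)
The plan is to work with the contour dynamics equation \eqref{eq:CDE} and the arc-length parametrization, and to show that the relevant regularity functionals stay finite on any finite time interval by a Gr\"onwall argument. First I would fix an arc-length parametrization $\gamma(\cdot,t)$ of $\partial\Omega(t)$; since the flow preserves area and, more importantly, the length $L(t)$ of the boundary evolves continuously, we may rescale to work on a fixed torus $\TT$. The key quantities are the tangent angle $\theta$ (so that $\partial_s\gamma = (\cos\theta,\sin\theta)$) and the curvature $\kappa = \partial_s\theta$; a $W^{2,p}$ patch is precisely one with $\kappa \in L^p$ together with a bi-Lipschitz (arc-chord) bound on $\gamma$. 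The first step is to record the evolution equations for the arc-length element, for $\theta$, and then differentiate once more to obtain the evolution equation for $\kappa$ — this is exactly the curvature equation advertised in the abstract. The structure one expects is
\begin{equation}\label{eq:curv_evol_plan}
\partial_t \kappa = \mathcal{L}[\kappa] + \mathcal{N}[\gamma,\kappa],
\end{equation}
where $\mathcal{L}$ is a linear (dispersive, principal) term and $\mathcal{N}$ collects lower-order contributions built from $\gamma$, its tangent, and $\kappa$ through the Biot--Savart kernel.

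Second, I would establish the a priori control. The $C^{1,\alpha}$ well-posedness theory (available for any $0<\alpha<1$, cf.\ \cite{MR1867882,MR2686124}) already furnishes, for a $W^{2,p}\subset C^{1,1-1/p}$ initial patch, a local-in-time $C^{1,\alpha}$ solution with uniform control of $\|\gamma\|_{C^{1,\alpha}}$ and of the arc-chord constant on any interval where the solution persists; since $W^{2,p}$ patches are in particular $C^{1,\alpha}$, and since $C^{1,\alpha}$ regularity is known to be global (Chemin), this bootstrap lives on all of $[0,\infty)$. The remaining task is to propagate the extra half-derivative, i.e.\ to bound $\|\kappa(\cdot,t)\|_{L^p}$. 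Applying $\partial_t$ to $\|\kappa\|_{L^p}^p$ and using \eqref{eq:curv_evol_plan}, the principal linear term should contribute a commutator/antisymmetry that is harmless in $L^p$ (this is where $1<p<\infty$ enters, via Calder\'on--Zygmund boundedness of the singular integral operators appearing when one differentiates the Biot--Savart kernel along the contour — the endpoints $p=1,\infty$ fail precisely here), while the nonlinear term is estimated by
\[
\Big|\frac{d}{dt}\|\kappa\|_{L^p}^p\Big| \lesssim C\big(\|\gamma\|_{C^{1,\alpha}},\text{arc-chord}\big)\,\big(1 + \|\kappa\|_{L^p}^p\big).
\]
Gr\"onwall then gives $\|\kappa(\cdot,t)\|_{L^p} \le F(t) < \infty$ for every finite $t$, and combined with the arc-chord bound this says exactly that $\Omega(t)$ is a $W^{2,p}$ domain for all $t>0$.

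Third, one must upgrade these a priori estimates to an actual well-posedness/persistence statement: approximate $\Omega_0$ by smooth patches $\Omega_0^{(n)}$, use Chemin's global smooth theory to get smooth solutions $\Omega^{(n)}(t)$, apply the uniform bounds above (which depend only on the $W^{2,p}$ norm, the arc-chord constant, and $t$), and pass to the limit using the uniqueness of the Yudovich weak solution to identify the limit with $\Omega(t)$; lower semicontinuity of the $W^{2,p}$ seminorm under the relevant convergence then gives $\kappa(\cdot,t)\in L^p$.

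The main obstacle I anticipate is the derivation and estimation of the curvature equation \eqref{eq:curv_evol_plan}: one has to differentiate the nonlocal CDE twice along the arc-length variable, carefully split the resulting integrals into a principal singular-integral part (which must be shown to be of Calder\'on--Zygmund type with kernel depending mildly on $\gamma$) and a remainder, and verify that the commutators produced when estimating $\tfrac{d}{dt}\|\kappa\|_{L^p}^p$ do not cost a full derivative — i.e.\ that all genuinely singular contributions are either antisymmetric or absorbed by $L^p$-boundedness of Cauchy-type operators on a $C^{1,\alpha}$ curve. Keeping the constants in terms of only the arc-chord bound and $\|\gamma\|_{C^{1,\alpha}}$ (which we already control globally) is what makes the Gr\"onwall argument close.
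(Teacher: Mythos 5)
Your outline is correct in substance and reaches the theorem, but it takes a genuinely different route from the paper. The paper establishes local wellposedness not by a priori estimates plus smooth approximation, but by a Banach fixed-point argument on the contour equation \eqref{eq:CDE} in the Lagrangian parametrization (Section~\ref{sec:existence}): the closed set $B^M_p$ is shown to be a complete metric space, the velocity map $\g\mapsto v(\g)$ is proved to be $W^{2,p}$ (Proposition~\ref{prop:derivatives_v}) and Lipschitz on $B^M_p$ (Propositions~\ref{prop:Lip_v}, \ref{prop:Lip_ds2v}), and the integral form of \eqref{eq:CDE} contracts. This buys uniqueness, continuous dependence, and $C^1$-in-time $W^{2,p}$ regularity for free, which the approximation route you propose would require extra work to recover (passage to the limit, identification via Yudovich, and lower semicontinuity, as you acknowledge). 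Second, the paper's global continuation does not pass through the curvature equation at all: it runs Gr\"onwall directly on $|\g|_{W^{2,p}}$ using the linear bound $|\p_s^2 v|_{L^p}\le C_0|\g|_{W^{2,p}}$ of Proposition~\ref{prop:Global_Sobolev}, where $C_0$ depends only on the globally controlled $C^{1,\alpha}$ data; the curvature evolution equation \eqref{eq:evo_curvature} is derived and used only in Sections~\ref{sec:curvatureeq}--\ref{sec:illposedness}, for the illposedness part. Your Gr\"onwall at the level of $\|\k\|_{L^p}$ is morally equivalent and would close for the same reason ($\p_s v\cdot\T\in L^\infty$ and $\p_s^2 v\cdot\N\in L^p$ with constants depending only on $C^{1,\alpha}$ data), but it front-loads the derivation and justification of the curvature equation; note that making that derivation rigorous at $W^{2,p}$ regularity is itself nontrivial (in the paper it is Proposition~\ref{prop:equivalence} and requires the $C^1([0,T];W^{2,p})$ regularity of $\g$, which comes out of the fixed-point machinery). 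So the hard analytical content --- Calder\'on--Zygmund and maximal-function bounds for the singular integrals obtained by differentiating \eqref{eq:CDE} twice along a $C^{1,\alpha}$ curve --- is identical in both approaches; the packaging differs, and the paper's choice avoids prerequiring the curvature formulation for the wellposedness half and delivers a stronger (fixed-point, $C^1$-in-time) statement.
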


\begin{remark}\label{remark:wellposedness}
\hfill
\begin{enumerate}
\item This $W^{2, p}$ regularity result does not follow from simple modification of the arguments in \cite{MR1235440,MR1207667} for the $C^{1,\alpha}$ patches.

 \item We use the contour equation \eqref{eq:CDE} to establish the local wellposedness of $W^{2,p}$ patches. The global wellposedness of $W^{2,p}$ patches follows from a continuation criterion based on known global regularity for $C^{1,\alpha}$ patches.



    \item The result generalizes to higher order Sobolev spaces $W^{k,p}$ for all $k\geq 2$ and $1<p<\infty$ in a straightforward (though highly computational) manner.

\end{enumerate}
\end{remark}

In contrast to the wellposedness of $W^{2,p}$ patch solutions when $1<p<\infty$, we show that the patch problem is ill-posed in $C^2$. 

\begin{theorem}\label{thm:main_illposedness}
There exist $C^{2}$ domains  $\Omega_0 \subset \RR^2 $ and $T>0$ such that the unique patch solution $ \Omega(t) $ with initial data $ \Omega_0 $ is not $W^{2, \infty}$ for any $t\in (0,T]$.

\end{theorem}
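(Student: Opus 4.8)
The plan is to derive the evolution equation for the curvature $\kappa$ of the patch boundary, isolate its dominant linear part, and show that this linear part is dispersive in a way that instantaneously destroys $L^\infty$ bounds on $\kappa$ while preserving $W^{2,p}$ regularity (consistent with Theorem~\ref{thm:main_wellposedness}).

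First I would set up the intrinsic formulation. Parametrizing $\partial\Omega(t)$ by arc-length and writing the boundary in terms of the tangent angle $\theta(s,t)$, so that $\kappa = \partial_s\theta$, one uses \eqref{eq:CDE} together with the Biot--Savart law to compute $\partial_t\theta$ and then differentiate to get an equation of the schematic form $\partial_t\kappa = \mathcal{L}\kappa + (\text{lower order / quadratic terms})$, where the arc-length measure also evolves by its own equation. The key computation — and I expect this to be the technical heart of the construction — is to extract $\mathcal{L}$ explicitly and recognize it as (a constant multiple of) a linear dispersive operator; the natural candidate, given that the singular kernel in \eqref{eq:CDE} is logarithmic and one differentiates twice, is something like the Hilbert transform composed with $\partial_s^2$, i.e.\ a third-order dispersive operator of Airy type or a $|\partial_s|^{?}$-type operator on $\TT$ after linearizing around a disc (a circle is a steady state, and linearizing the contour equation around it gives the classical Kelvin-mode dispersion relation $\partial_t \hat\theta_n \sim i\,c_n\,\hat\theta_n$ with $c_n$ growing in $n$).

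Next I would exploit the dispersion relation. On the circle, the linearized evolution has frequencies $n$ rotating at speeds $c_n$ that are \emph{not} linear in $n$ (this is exactly why smoothness is not improved but also why high frequencies defocus). Choosing initial curvature data $\kappa_0 \in L^\infty$ (so $\Omega_0$ is $C^2$, in fact one wants $\kappa_0$ just barely in $L^\infty$, e.g.\ lacunary Fourier series $\sum 2^{-k} e^{i 2^k s}$ type, which is continuous hence $C^2$ boundary) such that the phases $c_n t$ scramble the partial sums, one gets that for every $t\in(0,T]$ the solution $\kappa(\cdot,t)$ has unbounded partial sums, hence $\kappa(\cdot,t)\notin L^\infty$, i.e.\ $\Omega(t)$ is not $W^{2,\infty}$. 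The mechanism is the standard one behind ill-posedness of transport/dispersive equations in endpoint spaces: a dispersive symbol maps a carefully tuned lacunary $L^\infty$ datum out of $L^\infty$ immediately, while the same datum, being in every $W^{2,p}$ with $p<\infty$ by Littlewood--Paley, stays there by Theorem~\ref{thm:main_wellposedness}. The ``regains $C^2$ at integer times'' phenomenon advertised in the abstract strongly suggests the $c_n$ are arranged (after a suitable time normalization) so that $c_n \equiv 0 \pmod{2\pi}$ at $t\in\ZZ$, which also tells us the precise form of the symbol to aim for.

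The main obstacle is controlling the nonlinear and non-local error terms: \eqref{eq:CDE} is quasilinear and the curvature equation will have commutator-type remainders (from the difference between the true kernel $\ln|\gamma(\xi)-\gamma(\eta)|$ and its flat model), plus contributions from the evolving arc-length and from the patch not being an exact disc. I would handle this perturbatively: work on a short time interval, set up a fixed-point/energy estimate for the remainder in a space like $W^{2,p}$ (where well-posedness holds, so the remainder is controlled) while simultaneously tracking the $L^\infty$ norm of $\kappa$ through the linear flow, showing the linear dispersive growth of $\|\kappa\|_{L^\infty}$ dominates the $W^{2,p}$-bounded, hence $L^\infty$-subcritically-bounded-in-a-weak-sense, error. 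Concretely: decompose $\kappa = e^{t\mathcal{L}}\kappa_0 + r$, prove $\|r(t)\|_{L^\infty} \lesssim \|r(t)\|_{W^{2,p}} \lesssim$ bounded via Theorem~\ref{thm:main_wellposedness} and the local theory, and prove $\|e^{t\mathcal{L}}\kappa_0\|_{L^\infty} = \infty$ for $t\in(0,T]$ by the lacunary construction; this forces $\|\kappa(t)\|_{L^\infty}=\infty$. A secondary technical point is ensuring the constructed $\Omega_0$ is a genuine simply-connected $C^2$ Jordan domain (small perturbation of a disc) so that all the earlier machinery applies, and that $T$ can be taken uniform. Getting the model operator $\mathcal{L}$ exactly right — in particular its symbol's growth rate and its arithmetic at integer times — is what everything hinges on, so I would do that computation first and in full before attempting the perturbative closure.
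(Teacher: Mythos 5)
Your overall architecture---derive an evolution equation for the curvature, isolate a linear model operator $\mathcal{L}$, decompose $\k = e^{t\mathcal{L}}\k_0 + r$, show the linear evolution exits $L^\infty$ while the remainder stays bounded---is exactly the architecture the paper uses. You also correctly flag that ``getting the model operator $\mathcal{L}$ exactly right is what everything hinges on.'' But your two central guesses---what $\mathcal{L}$ is and how to control the remainder---are both off in ways that would derail the execution.

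\textbf{The model operator is not high-order dispersive.} The paper computes (Theorem~\ref{curveq9122}) that the curvature equation has the form $\p_t\k = a(\xi,t)\k + \pi\mathcal{H}(\k) + F$, where $\mathcal{H}$ is the \emph{zeroth-order} periodic Hilbert transform, $a = -\p_s v\cdot\T$ is Hölder continuous, and $F$ is Hölder continuous. The group is $e^{t\pi\mathcal{H}} = \cos(\pi t)\Id + \sin(\pi t)\mathcal{H}$, which is a rotation in the two-dimensional space spanned by $\Id$ and $\mathcal{H}$---the symbol $-i\sign(n)$ is the \emph{same} for every frequency, so there is no $n$-dependent dispersion relation and no ``phase scrambling.'' Your own observation that the solution regains $C^2$ precisely at integer times should have tipped you off: a symbol $c_n$ growing in $n$ cannot satisfy $c_n t\equiv 0\pmod{2\pi}$ at $t\in\ZZ$ for all $n$ simultaneously unless it is actually constant. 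Your lacunary candidate $\k_0 = \sum 2^{-k}e^{i2^k s}$ would fail outright here: $\mathcal{H}$ applied to a lacunary series with $\ell^1$ coefficients produces another lacunary series with the same modulus of coefficients, hence still in $L^\infty$. The correct mechanism is the failure of $\mathcal{H}$ to map $C(\TT)\to L^\infty(\TT)$, exploited in the paper via data with a $(\ln|\xi|^{-1})^{-1/2}\sign(\xi)$ profile near a point, whose Hilbert transform develops a $(\ln|\xi|^{-1})^{1/2}$ singularity.

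\textbf{The remainder bound you propose is too weak.} You write $\|r(t)\|_{L^\infty}\lesssim\|r(t)\|_{W^{2,p}}$, but the $W^{2,p}$ well-posedness theory controls the \emph{boundary} in $W^{2,p}$, hence the \emph{curvature} only in $L^p$---and an $L^p$ bound on the Duhamel remainder is useless for concluding $\k(t)\notin L^\infty$. What is actually needed, and what the paper proves (Propositions~\ref{prop:F_L_holder} and~\ref{prop:F_holder}, plus the commutator Lemma~\ref{lemma:H_holder_commutator}), is that the source terms $F$ and the commutator $[\mathcal{H}, e^{-\int a}]\k$ are Hölder continuous, with a Hölder exponent uniform in $p$ (for $p\geq 4$). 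This is a genuinely stronger statement than $L^p$ boundedness, and it is the technical heart of the argument; there is no shortcut through the $W^{2,p}$ well-posedness theorem alone. Without this estimate, your perturbative closure does not close.
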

\begin{remark}\label{remark:illposedness}
A few remarks concerning Theorem \ref{thm:main_illposedness}.
\begin{enumerate}

\item It follows that the patch problem is illposed in $C^{2} $ and $C^{1,1}$ thanks to the equvalence $C^{1,1}(\TT) = W^{2,\infty}(\TT)$.

\item The illposedness mechanism is based on a certain dispersion effect in the evolution of the curvature which is purely linear. We discuss this in detail in the coming subsection.

\item  In fact, one can show that the constructed patch solution $ \Omega(t) $ in our ill-posedness example is $C^2$ if and only if $t\in \ZZ$, and for other times $t\in \RR \setminus \ZZ$, $ \Omega(t) $ is only $W^{2, p} $ for all $p<\infty$. See Theorem \ref{thm:final_illposedness} for details.

\item Using our setup it should be possible to prove the illposedness of the patch problem in $C^{k}$ for any integer $k\geq 2$.
The case $C^{1}$ seems to be out of reach at the moment.

\end{enumerate}
\end{remark}

\subsection{Outline of the proof}
Our proof for both the wellposedness and illposedness is Lagrangian in its heart and based on the contour equation \eqref{eq:CDE} $\gamma: \TT \times [0,T] \to \RR^2$.

The existence of $W^{2,p}$ vortex patches follows from a standard Banach fixed-point argument. The key step is to show the contraction estimates in this setup. It is known that for the vortex patch the velocity gradient is not continuous in $\RR^2$, specifically across the patch boundary. However, we show that in the Lagrangian variable, the velocity field $v $ is $W^{2,p}$ along the patch boundary. This can be used to establish local well-posedness.
Once we have the local existence for $W^{2,p}$ vortex patches, the global regularity follows from the well-known $C^{1,\alpha}$ global regularity.

As expected, the $W^{2,p}$ wellposedness argument breaks down at $p=1$ and $p=\infty$, but for different reasons. The case $p =1$ is due to a lack of regularity/control and seems to be out of reach of the techniques in the current paper.
The case $p=\infty$ fails due to the unboundedness of certain singular operator appearing in the second-order derivative of the velocity $v$, indicative of the illposedness of the patch problem in $L^\infty$-based spaces.

To prove the $C^2$ illposedness, we do not use the contour equation \eqref{eq:CDE} directly.
Instead, we track the evolution of the (signed) curvature $\k:\TT \times [0,T] \to \RR$ of the patch boundary.
A careful examination of the curvature equation reveals that it has the following structure:
\begin{equation}\label{eq:heuristic_intro}
  \p_t \k  = (-\p_s v \cdot\T   + \pi \mathcal{H}) \k + l.o.t
\end{equation}
where  $\mathcal{H}$ is the periodic Hilbert transform and $\T$ is the tangent vector. The rest of the terms entering the equation are strongly nonlinear but can be shown to have higher (H\"older) regularity.
This type of equations is known to be illposed in $L^\infty$-based spaces,
such as $C^2$ or $W^{2,\infty}$ (see e.g. \cite{MR4065655}). 

These observations together allow us to pick fairly explicit initial data with curvature $\k_0 \in C(\TT) $ such that the unique solution  $ \k (t) \in L^{p}$ for all $p<\infty$ but $ \k (t) \not \in L^{\infty}$ on $0<t<T$ for some $T>0$. The claim in Remark \ref{remark:illposedness} about the patch being $C^2$ only for $t \in \ZZ$ follows from the explicit formula of the group $e^{ t \pi \mathcal{H}} = \cos(\pi t)\Id + \sin(\pi t) \mathcal{H}$ which follows from $\mathcal{H}^2=-1$ identity.

\subsection{Organization of the paper}
The rest of the paper is as follows.
\begin{enumerate}

\item In Section \ref{sec:existence}, we establish the (local) wellposedness for $W^{2,p}$. The proof is based on $W^{2,p}$ Sobolev estimates of the velocity field along the patch boundary.

\item In Section \ref{sec:globalregularity}, we show      how to obtain the global $W^{2,p}$ regularity  of vortex patches  by using known $C^{1,\alpha}$ regularity results, thus proving Theorem \ref{thm:main_wellposedness}.

\item In Section \ref{sec:curvatureeq}, we introduce some preliminary geometric calculations and then derive the evolution equation for the (signed) curvature of the patch boundary. We also discuss the arc-length/curvature
system as an alternative formulation of patch evolution.

\item In Section \ref{sec:illposedness}, we use the curvature equation to show that the vortex patch problem is ill-posed in $C^{2}$.

\end{enumerate}

\subsection*{Acknowledgment.}
AK acknowledges partial support of the NSF-DMS grant 2006372 and of the Simons Fellowship grant 667842. XL is partially supported by the NSF-DMS grant 1926686.
Tarek Elgindi should have been a rightful co-author of this article, but modestly stepped aside.
We would like to thank him for numerous helpful discussions and essential insights. We also thank Thomas Yizhao Hou for pointing us to a few helpful references.

\section{Existence of \texorpdfstring{$W^{2,p}$}{W2p} vortex patches}\label{sec:existence}
In this section, we prove the existence of $W^{2,p}$ vortex patches using the contour equation \eqref{eq:CDE}. The proof is based on a fixed-point argument in a suitable Sobolev setting. Compared to the $C^{1,\alpha}$ local existence result~\cite{MR1867882}, we emphasize geometric quantities such as tangent vector, arc-length metric, and curvature. Such an emphasis allows us not only to prove the local existence of vortex patches in the Sobolev spaces $W^{2,p}$ but also to reveal the dispersive characteristics in the evolution that we will exploit in Section \ref{sec:illposedness} for the illposedness.

As a general comment, throughout the paper, the time variable $t$ will often be suppressed, and the evolution equations are understood at each fixed time $t$.

\subsection{Preliminaries}
The motion of a vortex patch  is given by a parametric curve $\gamma :\TT \times [0,T] \to \RR^2$
\begin{equation}\label{eq:gamma_curve}
\p_t \gamma  (\xi,t )= v(\gamma (\xi,t ),t).
\end{equation}
where the velocity field $v :\RR^2 \times [0,T] \to \RR^2$ is defined by the Biot-Savart law,
\begin{align} \label{eq:velocity}
v(\gamma(\xi,t),t) &   =   \int_{ \TT  }    \dot{\gamma } (\eta)  \ln |\gamma(\xi)-\g( \eta ) |     \, d\eta .
\end{align}
Here the factor $\frac{1}{2\pi}$ is dropped from \eqref{eq:euler_biot_savart} since we consider a single patch with constant vorticity $2\pi$.

Throughout the paper, we also view the parameterization $\g : \TT\times [0,T] \to \RR^2 $ as a time-dependent $2\pi$-periodic function on $\RR$; we denote the arc-length metric by $g= |\dot{\gamma}|$, where the dot on top indicates $\p_\xi = \frac{\p}{\p \xi}$, i.e. the usual differentiation; we use $\p_s =\frac{1}{g} \p_\xi $ to indicate the derivative with respect to the arc-length parameter $s = \ell(\xi) : \xi \mapsto \ell(\xi) = \int_0^\xi g(\eta)\, d\eta$.

We consider the usual counterclockwise orientation of the curve $\g$, and let $\T $ be the unit tangent vector and $\N=-\T^\perp$ be the outer unit normal vector for the curve $\gamma $. More precisely, in the Lagrangian coordinates we write $\T(\xi,t)$ or just $\T(\xi)$ for the tangent vector at a point $\g(\xi)$ and time $t$. The (signed) curvature $\k : \TT\times [0,T] \to \RR$ of $\gamma$ is defined via
\begin{equation}
\begin{cases}\label{Neq1222}
\p_s \T  = - \k  \N, &\\
\p_s \N  =   \k  \T .  &
\end{cases}
\end{equation}

Here and in what follows, we sometimes switch between a given parametrization $\TT \ni \xi \mapsto \gamma(\xi)$ and an arc-length parametrization $ \frac{L }{2\pi}\TT \ni s \mapsto \gamma(s)$ and still use the same notation $\gamma,\k,\T,\N\dots$ for the position $\g$, curvature $\k$, tangent $\T$, normal $\N$ etc, except that we use arguments $s$ and $s'$ as the arc-length parameters instead of $\xi,\eta$ and $\tau$ for Lagrangian labels.

\subsection{Functional setup}
In this subsection, we set up functional spaces for the parametric curves. To ensure a $C^1$ function $\g:\TT\to \RR^2$ parameterizes a simple curve (no self-intersection), we need to consider the so-call arc-chord $\Gamma$, defined by
\begin{equation}\label{eq:def_arc_chord}
 \Gamma (\g)=  \sup_{\xi,\eta \in \TT,\, \xi \ne \eta} \frac{ |\xi -\eta | }{|\gamma(\xi)  - \gamma( \eta)| },
\end{equation}
where the choice of distance $|\xi -\eta|$ is inessential for our purposes--- it can be the distance on the torus or the Euclidean one when viewing $\xi,\eta$ as points in an interval of $\RR$.
To show the existence of $W^{2,p}$ patches, let us consider an open subset $X_p$ of the Banach space $W^{2,p}(\TT) $ 
$$
X_p := \{     \gamma :\TT   \to \RR^2 : \g\in W^{2,p}(\TT)  \,\, \text{such that} \, \,   \Gamma < \infty   \},
$$
which includes all proper parametrizations of the boundaries of all $W^{2,p}$ simply-connected bounded domains since $\Gamma < \infty$ implies $g=|\dot{\g}|>0$.

To run the Banach fixed point argument, we need a complete metric space in $X_p$. For any $M > 1$, we consider a closed set $B^M_p \subset X_p $
\begin{equation}\label{eq:B_p_M_definition}
B^M_p:= \{   \gamma  \in X_p:  | \gamma  |_{W^{2,p}(\TT)}  \leq M , \, | g|_* \leq M, \, \Gamma \leq M  \},
\end{equation}
where for $g\in W^{1,p}(\TT)$ the functional $ |\cdot |_*$ is defined by
\begin{equation}\label{eq:g_*_definition}
 | g|_*: = \max\{ |1/g|_{L^\infty(\TT)} ,|g|_{L^\infty(\TT)} \}.
\end{equation}
(in particular, it is not a norm).

For any $M>1$, $B^M_p$ is a subset of simple closed $W^{2,p}$ curves, i.e  with $L^p$ curvature, and $X_p = \bigcup_{M>0} B^M_p.$
Next, we show that $B^M_p$ is a complete metric space with the natural norm $ | \cdot  |_{W^{2,p}(\TT)}$, which is necessary for the Banach fixed point argument later.
\begin{lemma}\label{lemma:completeness}
For any $1< p \leq \infty$  and $M>1$, $B^M_p$ is a complete metric space with metric $d(\g_1,\g_2) = | \g_1 -\g_2|_{W^{2,p}(\TT)}$.
\end{lemma}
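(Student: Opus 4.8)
The plan is to show that $B^M_p$ is a closed subset of the Banach space $W^{2,p}(\TT;\RR^2)$, from which completeness with respect to $d$ follows immediately. So I take a sequence $\g_n \in B^M_p$ with $\g_n \to \g$ in $W^{2,p}(\TT)$ and check that the three defining constraints---$|\g|_{W^{2,p}}\leq M$, $|g|_*\leq M$, and $\Gamma(\g)\leq M$---pass to the limit. The first is automatic since the norm is lower semicontinuous (in fact continuous) under norm convergence. For the other two I will use that $W^{2,p}(\TT)\hookrightarrow C^1(\TT)$ compactly for $1<p\leq\infty$ (Morrey / Sobolev embedding on the compact one-dimensional manifold $\TT$), so that $\g_n\to\g$ and $\dot\g_n\to\dot\g$ uniformly on $\TT$.

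Given the uniform convergence of $\dot\g_n$, the arc-length metrics $g_n=|\dot\g_n|$ converge uniformly to $g=|\dot\g|$. The bound $|g_n|_{L^\infty}\leq M$ then gives $|g|_{L^\infty}\leq M$. For the lower bound, $|1/g_n|_{L^\infty}\leq M$ means $g_n\geq 1/M$ pointwise, and uniform convergence preserves this, so $g\geq 1/M$, hence $g$ never vanishes and $|1/g|_{L^\infty}\leq M$; in particular $\g$ is a genuine immersion and $|g|_*\leq M$. Similarly, for the arc-chord constant: for each fixed pair $\xi\neq\eta$ we have $|\xi-\eta|\leq M\,|\g_n(\xi)-\g_n(\eta)|$, and passing $n\to\infty$ using pointwise (indeed uniform) convergence of $\g_n$ gives $|\xi-\eta|\leq M\,|\g(\xi)-\g(\eta)|$; taking the supremum over $\xi\neq\eta$ yields $\Gamma(\g)\leq M$. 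In particular $\g$ is injective, so $\g\in X_p$ and thus $\g\in B^M_p$. Since $B^M_p$ is closed in a Banach space, it is complete under the induced metric $d$.

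The only mild subtlety---and the step I would be most careful about---is the endpoint $p=\infty$: here $W^{2,\infty}(\TT)=C^{1,1}(\TT)$ is not separable and the embedding into $C^1$ is continuous but the argument above only uses that bounded sequences in $W^{2,\infty}$ that converge in $W^{2,\infty}$-norm also converge in $C^1$, which is immediate, so no real difficulty arises. One should also note that closedness of $B^M_p$ (rather than, say, weak-$*$ compactness) is exactly what is needed because the fixed-point map will be a contraction in the $d$-metric, so genuine norm-completeness of the ambient metric space is the right statement. I expect the whole proof to be short once the Sobolev embedding is invoked; there is no genuine obstacle, only the bookkeeping of checking each of the three constraints survives the limit.
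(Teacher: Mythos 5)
Your proposal is correct and follows essentially the same approach as the paper: show $B^M_p$ is closed in the Banach space $W^{2,p}(\TT)$ by invoking the Sobolev embedding to get $C^1$-convergence and then verifying each of the three defining constraints passes to the limit. Your treatment of the arc-chord constant (for each fixed pair $\xi\neq\eta$ pass the inequality $|\xi-\eta|\leq M|\g_n(\xi)-\g_n(\eta)|$ to the limit, then take the supremum) is in fact a little cleaner than the paper's, which instead lower-bounds the denominator $|\g(\xi)-\g(\eta)|\ge|\g_n(\xi)-\g_n(\eta)|-4\pi|\dot\g-\dot\g_n|_{L^\infty}$ and lets $n\to\infty$.
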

\begin{proof}
Let $\g_n  \in B^M_p$ be a Cauchy sequence in the metric $d$. Then there is $\g  \in W^{2,p}(\TT) $ such that $\g_n \to \g$. We need to show that in fact $\g \in  B^M_p $.

Apparently, $|\g   |_{W^{2,p}(\TT)}  \leq M $, so it remains to verify the bounds for $g$ and $\Gamma$. Due to the Sobolev embedding $  W^{1,p}(\TT)\subset  C (\TT)  $ for $p>1$ and the inequality
$$
g_n - |g_n -g|_{L^\infty(\TT)} \leq g \leq g_n + |g_n -g|_{L^\infty(\TT)}
$$
we have that
$$
1/M -\ep< g < M +  \ep \quad \text{for any $\ep>0$},
$$
which concludes that $ |g|_* \leq M$.

For the arc-chord
$\Gamma$, we see that  for any $\eta ,\xi \in \TT$, $\eta\neq \xi$ and any $n\in \NN$,
\begin{align*}
 \frac{ |\xi -\eta | }{|\g(\xi) - \g(\eta ) |}
 &\leq  \frac{ |\xi -\eta |  }{|\g_n(\xi) - \g_n(\eta ) | -   4\pi|\dot{\g}-\dot{\g}_n|_{L^\infty(\TT)} }.
\end{align*}
Since $|\dot{\g}-\dot{\g}_n|_{L^\infty(\TT)} \leq C_p  | {\g}- {\g}_n|_{W^{2,p}(\TT)}  \to 0  $ as $n\to \infty$, for any $\ep>0$, we can choose $n$ sufficiently large depending on $\xi,\eta, M$, and $\ep$ such that
$$
\frac{ |\xi -\eta |  }{|\g(\xi) - \g(\eta ) |}
 \leq   M  + \ep.
$$
Since $\ep>0$ is arbitrary, this implies $\Gamma(\g) \leq M $.
We conclude that $ \g  \in  B^M_p$ .
\end{proof}

\subsection{Basic estimates of \texorpdfstring{$W^{2, p }$}{W2p} curves}
In this subsection, we derive suitable Sobolev estimates for the $W^{2, p }$ curves.  These estimates will rely on $L^p$-boundedness of the maximal function, for which we recall the necessary definitions here.

Given any $f \in L^1(\TT)$,  we denote by $\mathcal{M}f:\TT\to \RR$ the maximal function of $f$,
\begin{equation}\label{eq:def_maximal_function}
\mathcal{M}f(\xi) = \sup_{0< \ep < 4\pi }\frac{1}{2\ep}\int_{ \xi-\ep}^{\xi + \ep} |f (\eta )| \, d\eta .
\end{equation}
The restriction of $\ep < 4\pi$ is non-essential and the boundedness of $\mathcal{M}$ on $L^p(\TT)$ for $1<p\leq \infty$ follows from the standard $\RR^d$ results \cite{MR0290095}.

In the remainder of this paper, we denote by $C_M$ a positive constant depending only on $M$ and $p$ that may change from line to line. We also recall the big O notation $X = O(Y)$ for a quantity $X$ such that $|X| \leq C Y$ for some absolute constant $C>0$.

Now we state a few basic estimates that we will use as the building blocks.
\begin{lemma}\label{lemma:building_blocks}
Let  $1< p \leq \infty$, $\alpha = 1- \frac{1}{p}$,  and $\g \in B^M_p$. For any $\xi, \eta\in \TT$, we have
\begin{subequations}
\begin{align}
\T(\xi)  \cdot \T(\eta) &= 1+ O(C_M |\xi -\eta |^{2\alpha }) \label{eq:building_blocks_a}\\
\T(\xi)  -\T(\eta) &=  O(C_M |\xi -\eta |^{ \alpha }) \label{eq:building_blocks_b}\\
\T(\eta) \cdot \N(\xi) & = O(C_M |\xi -\eta |^{\alpha }) \label{eq:building_blocks_c} \\
(\g(\xi) - \g(\eta)) \cdot \N(\xi) &= O(C_M |\xi -\eta |^{1+\alpha })  \label{eq:building_blocks_d} \\
(\T(\xi) - \T(\eta)) \cdot \T(\xi) &= O(C_M |\xi -\eta |^{2\alpha }) \label{eq:building_blocks_e} \\
|\g(\xi) - \g(\eta)|^{-1} &= O(C_M |\xi -\eta|^{-1}) \label{eq:building_blocks_f}
\end{align}
\end{subequations}
and for any $ \zeta \in \TT$ such that $|\eta-\zeta|, |\xi-\zeta| \leq |\xi-\eta|$, the maximal estimates
\begin{subequations}
\begin{align}
\T(\eta) \cdot \N(\xi) & =  O(C_M \mathcal{M}\k ( \zeta ) |\xi -\eta | ) \label{eq:building_blocks_M_a} \\
\T(\eta) \cdot \T(\xi) & = 1+ O(C_M \mathcal{M}\k ( \zeta ) |\xi -\eta |^{1+\alpha } ) \label{eq:building_blocks_M_b} \\
(\g(\xi) - \g(\eta)) \cdot \N(\xi) &= O(C_M \mathcal{M}\k ( \zeta ) |\xi -\eta |^{2})  \label{eq:building_blocks_M_c} \\
  \T(\xi)  \cdot \left[\T(\xi) -\T(\eta)  \right] &= O(C_M \mathcal{M}\k ( \zeta ) |\xi -\eta |^{1+\alpha}) \label{eq:building_blocks_M_d} \\
\left[(\g(\xi) - \g(\eta) \right]\cdot \left[\T(\xi) -\T(\eta)  \right] &= O(C_M \mathcal{M}\k (\zeta ) |\xi -\eta |^{2+\alpha}) \label{eq:building_blocks_M_e}.
\end{align}

\end{subequations}

Finally,
\begin{align}\label{eq:building_blocks_linear}
(\g(\xi) - \g(\eta)) \cdot \T(\xi) &= g(\zeta)(\xi -\eta )+ O(C_M |\xi -\eta |^{1+ \alpha }).
\end{align}
\end{lemma}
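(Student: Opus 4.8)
The plan is to reduce the whole list to two structural observations, after which each displayed estimate follows from the Frenet relations \eqref{Neq1222} and the fundamental theorem of calculus. First I would record the consequences of $\dot{\gamma}=g\,\T$ together with \eqref{Neq1222}: differentiating once more gives $\ddot{\gamma}=\dot g\,\T-g^{2}\k\,\N$, hence $\dot g=\ddot{\gamma}\cdot\T$ and $g^{2}\k=-\ddot{\gamma}\cdot\N$. Since $\g\in B^{M}_{p}$ we have $|g|_{*}\le M$ and $|\g|_{W^{2,p}(\TT)}\le M$, so $\|\dot g\|_{L^{p}(\TT)}\le M$ and $\|\k\|_{L^{p}(\TT)}\le M^{2}\|\ddot{\gamma}\|_{L^{p}(\TT)}\le M^{3}$, and therefore $\|g\k\|_{L^{p}(\TT)}\le C_{M}$. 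By the Morrey embedding $W^{1,p}(\TT)\subset C^{0,\alpha}(\TT)$ (Lipschitz when $p=\infty$), the functions $g$, $\T$ and $\N$ are $\alpha$-H\"older with seminorm $\le C_{M}$; in particular, using $\partial_{\xi}\T=-g\k\,\N$ and H\"older's inequality, $|\T(\xi)-\T(\eta)|\le\int_{[\eta,\xi]}g|\k|\le\|g\k\|_{L^{p}(\TT)}|\xi-\eta|^{\alpha}\le C_{M}|\xi-\eta|^{\alpha}$, which is \eqref{eq:building_blocks_b}, and likewise $|\g(\xi)-\g(\eta)|\le M|\xi-\eta|$. Estimate \eqref{eq:building_blocks_f} is then immediate from the definition \eqref{eq:def_arc_chord} and $\Gamma\le M$.

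Second, for the maximal estimates I would note that the hypothesis $|\zeta-\xi|,|\zeta-\eta|\le|\xi-\eta|$ forces $\zeta$ to lie on the shorter arc between $\eta$ and $\xi$, so that $[\eta,\xi]\subseteq[\zeta-|\xi-\eta|,\zeta+|\xi-\eta|]$ and hence, by \eqref{eq:def_maximal_function}, $\int_{[\eta,\xi]}|\k|\le 2|\xi-\eta|\,\mathcal{M}\k(\zeta)$; likewise $|\tau-\zeta|\le|\xi-\eta|$ for $\tau\in[\eta,\xi]$, so that $|g(\tau)-g(\zeta)|\le C_{M}|\xi-\eta|^{\alpha}$ by the H\"older continuity of $g$. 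Feeding the first bound into $\partial_{\xi}\T=-g\k\,\N$ upgrades the H\"older bound to $|\T(\xi)-\T(\eta)|\le 2M|\xi-\eta|\,\mathcal{M}\k(\zeta)$ whenever $\zeta$ is admissible.

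With these two facts the remaining estimates are bookkeeping. From $|\T|\equiv 1$ and $\T(\xi)\cdot\N(\xi)=0$ one has $\T(\xi)\cdot\T(\eta)=1-\tfrac12|\T(\xi)-\T(\eta)|^{2}$, $(\T(\xi)-\T(\eta))\cdot\T(\xi)=\tfrac12|\T(\xi)-\T(\eta)|^{2}$ and $\T(\eta)\cdot\N(\xi)=(\T(\eta)-\T(\xi))\cdot\N(\xi)$, which give \eqref{eq:building_blocks_a}, \eqref{eq:building_blocks_c}, \eqref{eq:building_blocks_e} via \eqref{eq:building_blocks_b}; the maximal versions \eqref{eq:building_blocks_M_a}, \eqref{eq:building_blocks_M_b}, \eqref{eq:building_blocks_M_d} follow by writing $\T(\eta)\cdot\N(\xi)=-\int_{[\xi,\eta]}g\k\,(\N\cdot\N(\xi))$ and bounding it by $M\int_{[\xi,\eta]}|\k|\le 2M|\xi-\eta|\,\mathcal{M}\k(\zeta)$, and, for the quadratic expressions, by estimating one factor of $|\T(\xi)-\T(\eta)|^{2}$ by the H\"older bound and the other by the maximal bound. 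For the displacement estimates I pair $\g(\xi)-\g(\eta)=\int_{\eta}^{\xi}g(\tau)\T(\tau)\,d\tau$ with $\N(\xi)$ or $\T(\xi)$: replacing $\T(\tau)\cdot\N(\xi)$ by $(\T(\tau)-\T(\xi))\cdot\N(\xi)$ and using \eqref{eq:building_blocks_b}, respectively the maximal bound, gives \eqref{eq:building_blocks_d} and \eqref{eq:building_blocks_M_c}; for \eqref{eq:building_blocks_linear} I split $\int_{\eta}^{\xi}g(\tau)\T(\tau)\cdot\T(\xi)\,d\tau=\int_{\eta}^{\xi}g(\tau)\,d\tau+\int_{\eta}^{\xi}g(\tau)\big(\T(\tau)\cdot\T(\xi)-1\big)\,d\tau$, estimate the second integral by \eqref{eq:building_blocks_a} and replace $g(\tau)$ by $g(\zeta)+O(C_{M}|\xi-\eta|^{\alpha})$ in the first; and for \eqref{eq:building_blocks_M_e} I pair $\int_{\eta}^{\xi}g\T$ with $\T(\xi)-\T(\eta)$ and write the integrand bracket as $\tfrac12\big(|\T(\tau)-\T(\eta)|^{2}-|\T(\tau)-\T(\xi)|^{2}\big)$, again splitting each square into one H\"older and one maximal factor.

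There is no deep obstacle here; the substance lies in choosing the Frenet identities and tracking the powers of $|\xi-\eta|$. The one point that requires care is the interval-containment step behind every maximal estimate — and, relatedly, the remark after \eqref{eq:def_arc_chord} about the choice of distance on $\TT$ — both of which are dispatched by first reducing to $|\xi-\eta|\le 1$, the complementary regime being trivially bounded and thus absorbable into $C_{M}$.
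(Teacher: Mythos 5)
Your proof is correct and rests on the same two pillars as the paper's: the fundamental theorem of calculus combined with the Frenet relations \eqref{Neq1222}, and the interval-inclusion bound $\int_{[\eta,\xi]}|\k|\le 2|\xi-\eta|\mathcal{M}\k(\zeta)$ (the paper's auxiliary inequality \eqref{eq:building_blocks_aux1}). The organizing principle differs slightly: where the paper handles each displayed estimate by direct appeal to FTC and then declares the remaining ones to ``follow mutatis mutandis,'' you first extract the two structural facts (the H\"older bound on $\T$ and the maximal bound on $\T(\xi)-\T(\eta)$) and then feed them through the algebraic identities $\T(\xi)\cdot\T(\eta)=1-\tfrac12|\T(\xi)-\T(\eta)|^{2}$, $(\T(\xi)-\T(\eta))\cdot\T(\xi)=\tfrac12|\T(\xi)-\T(\eta)|^{2}$, and $\T(\eta)\cdot\N(\xi)=(\T(\eta)-\T(\xi))\cdot\N(\xi)$. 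This buys you a uniform treatment of the dot-product estimates (e.g.\ \eqref{eq:building_blocks_M_b} and \eqref{eq:building_blocks_M_d} both drop out of factoring one copy of $|\T(\xi)-\T(\eta)|$ by the H\"older bound and the other by the maximal bound), whereas the paper does not spell out those identities. You are also more explicit than the paper about the point that matters most — that the admissibility condition on $\zeta$ forces $[\eta,\xi]$ into a small neighborhood of $\zeta$ — and correctly note this should be checked after reducing to small $|\xi-\eta|$, the complementary regime being absorbed into $C_M$. One place worth tightening if you write it out in full: in the $\N$-projected displacement estimate \eqref{eq:building_blocks_M_c} you apply the maximal bound to $\T(\tau)-\T(\xi)$ for $\tau\in[\eta,\xi]$; since $\zeta$ need not be admissible for the pair $(\tau,\xi)$, you should bound that difference by $M\int_{\tau}^{\xi}|\k|\le M\int_{\eta}^{\xi}|\k|\le C_M|\xi-\eta|\,\mathcal{M}\k(\zeta)$ via the larger interval rather than by invoking the two-sided admissibility directly on $(\tau,\xi)$. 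With that small caveat, both arguments are essentially equivalent.
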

\begin{proof}
Let us consider the first set of estimates. \eqref{eq:building_blocks_f} is a consequence of the assumption $\g \in B^M_p$.

Bounds \eqref{eq:building_blocks_b} and \eqref{eq:building_blocks_c} follow from the fundamental theorem of calculus and the assumption $\g \in B^M_p$. For instance,
\[ \T(\eta) \cdot \N(\xi) \leq |\int_{\eta}^{\xi}\k( \tau ) g(\tau) \N(\tau )\cdot   \N(\eta) \, d\tau| \leq |\k|_{L^p(\TT)} |g|_{L^\infty(\TT)}|\xi -\eta|^{\alpha} \]
 and due to $g \in W^{1,p}(\TT)$ with $|g|_* \leq M$ we have $ |\k|_{L^p(\TT)}|g|_{L^\infty(\TT)} \leq C_M$.

The fundamental theorem of calculus with \eqref{eq:building_blocks_c} implies \eqref{eq:building_blocks_a} and \eqref{eq:building_blocks_e}. Indeed, we have
\begin{align*}
(\T(\xi) - \T(\eta)) \cdot \T(\xi) = -\int_{\eta}^{\xi} \k( \tau ) g( \tau )\N(\tau )\cdot   \T(\xi) \, d\tau = O(C_M | \xi - \eta|^{2\alpha} ) .
\end{align*}

At last, \eqref{eq:building_blocks_d} follows from \eqref{eq:building_blocks_c}, and the estimate \eqref{eq:building_blocks_a} together with the H\"older continuity of $g$ also imply the last identity \eqref{eq:building_blocks_linear}.

Now we focus on the second set of estimates \eqref{eq:building_blocks_M_a}--\eqref{eq:building_blocks_M_e} involving maximal function. These estimates rely on the following simple bound:
\begin{equation}\label{eq:building_blocks_aux1}
\int_{\eta}^{\xi}    |\k(\tau)g(\tau)| \, d\tau \leq  2|\xi -\eta | \mathcal{M}  \k(\zeta) |g|_{L^\infty(\TT)}
\end{equation}
for any $\zeta \in\TT $ such that $|\zeta - \xi|,|\zeta-\eta| \leq |\xi-\eta|.$

We demonstrate how to obtain \eqref{eq:building_blocks_M_a} and \eqref{eq:building_blocks_M_c}, as the rest follows mutatis mutandis.
By the fundamental theorem of calculus,
\begin{align*}
 \T(\eta)\cdot  \N(\xi)& = -\int_{\eta}^{\xi} \N(\tau)\cdot  \N(\xi) g(\tau) \k(\tau)\, d\tau  = O\left(C_M \int_{\eta}^{\xi}    |\k(\tau)| \, d\tau \right)
\end{align*}
So  \eqref{eq:building_blocks_M_a} follows from the bound \eqref{eq:building_blocks_aux1}. For \eqref{eq:building_blocks_M_c}, we use the fundamental theorem of calculus twice:
\begin{align*}
(\g(\xi) - \g(\eta)) \cdot \N(\xi) & = \int_{\eta}^{\xi}  \T(\tau)     \cdot \N(\xi) g(\tau) \,d\tau \\
& =\int_{\eta}^{\xi} \int_{\tau}^{\xi}  \N(\tau')     \cdot \N(\xi) \k(\tau') g(\tau') \,d\tau' g(\tau) \,d\tau
\end{align*}
and hence for any $\zeta$ lying between $\xi$ and $\eta$,
\begin{align*}
\Big |(\g(\xi) - \g(\eta)) \cdot \N(\xi)   \Big| \leq C_M  \int_{\eta}^{\xi} \int_{\eta }^{\xi}  |\k(\tau')| \,d\tau'    \,d\tau = O(C_M \mathcal{M}\k (\zeta) |\xi -\eta |^2 ).
\end{align*}
\end{proof}

The next result will be crucial for both the wellposedness and ill-posedness results.
\begin{corollary}\label{corollary:building_blocks}
Let  $1< p \leq \infty$, $\alpha = 1- \frac{1}{p}$,  and $\g \in B^M_p$. For any $\xi,\eta \in \TT$, $\xi \neq \eta$ and any $\zeta \in\TT $ such that $|\zeta - \xi|,|\zeta-\eta| \leq |\xi-\eta|$, there holds
\begin{align*}
 \frac{  (\gamma(\xi)- \gamma(\eta)) \cdot \T(\xi)}{  |\gamma(\xi)- \gamma(\eta)|^{2   }}  = \frac{\xi - \eta}{g(\zeta) |\xi -\eta|^2}   + O(C_M |\xi -\eta |^{\alpha -1}).
\end{align*}

\end{corollary}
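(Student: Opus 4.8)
The plan is to reduce the claim to two scalar facts already established in Lemma~\ref{lemma:building_blocks}, namely the tangential expansion~\eqref{eq:building_blocks_linear} and the normal bound~\eqref{eq:building_blocks_d}, together with the two lower bounds $g(\zeta)\ge 1/M$ and $|\g(\xi)-\g(\eta)|\ge |\xi-\eta|/M$, which are built into the definition of $B^M_p$ through $|g|_*\le M$ and $\Gamma\le M$ respectively. No dynamics enters, only the geometry of a fixed $W^{2,p}$ curve, so the whole argument is elementary $O$-calculus once these ingredients are in place.

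First I would expand the denominator. Since $\{\T(\xi),\N(\xi)\}$ is an orthonormal frame of $\RR^2$,
\[
|\g(\xi)-\g(\eta)|^2=\big((\g(\xi)-\g(\eta))\cdot\T(\xi)\big)^2+\big((\g(\xi)-\g(\eta))\cdot\N(\xi)\big)^2 .
\]
Inserting~\eqref{eq:building_blocks_linear} into the first square produces a cross term of size $O(C_M|\xi-\eta|^{2+\alpha})$ and an error square of size $O(C_M|\xi-\eta|^{2+2\alpha})$, while squaring~\eqref{eq:building_blocks_d} contributes another $O(C_M|\xi-\eta|^{2+2\alpha})$. Since $\TT$ has bounded diameter and $\alpha>0$, one has $|\xi-\eta|^{2+2\alpha}\le C|\xi-\eta|^{2+\alpha}$, so altogether
\[
|\g(\xi)-\g(\eta)|^2=g(\zeta)^2(\xi-\eta)^2+O(C_M|\xi-\eta|^{2+\alpha}) .
\]

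Next I would carry out the division directly. Write $A=(\g(\xi)-\g(\eta))\cdot\T(\xi)$, $D=|\g(\xi)-\g(\eta)|^2$, $a=g(\zeta)(\xi-\eta)$, $d=g(\zeta)^2(\xi-\eta)^2$, so that $a/d=\tfrac{\xi-\eta}{g(\zeta)|\xi-\eta|^2}$ is precisely the main term of the corollary, and by~\eqref{eq:building_blocks_linear} and the previous display $A=a+O(C_M|\xi-\eta|^{1+\alpha})$ and $D=d+O(C_M|\xi-\eta|^{2+\alpha})$. From the algebraic identity
\[
\frac{A}{D}-\frac{a}{d}=\frac{(A-a)\,d-a\,(D-d)}{D\,d}
\]
the numerator is $O(C_M|\xi-\eta|^{3+\alpha})$, using $d\le C_M|\xi-\eta|^2$ and $|a|\le C_M|\xi-\eta|$, whereas the denominator satisfies $D\,d\ge M^{-4}|\xi-\eta|^4$ by the two lower bounds recorded above. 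Dividing gives $\tfrac{A}{D}-\tfrac{a}{d}=O(C_M|\xi-\eta|^{\alpha-1})$, which is the assertion.

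I do not anticipate a genuine obstacle. The only point that needs a moment's care is the denominator expansion, where one must check that every error term there is at worst $O(C_M|\xi-\eta|^{2+\alpha})$ after absorbing powers of the (bounded) diameter of $\TT$; this is exactly where $\alpha=1-\tfrac1p>0$, i.e. $p>1$, is used, and it is also the reason the estimate would be vacuous at $p=1$. Everything else is bookkeeping with the $O(\cdot)$ notation, uniform in $\xi,\eta,\zeta$ with constants depending only on $M$ and $p$.
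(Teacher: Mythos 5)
Your proof is correct, and it amounts to a cleaner, more explicit version of the paper's argument. The paper's proof first asserts the Taylor-type expansion $\frac{|\xi-\eta|^2}{|\gamma(\xi)-\gamma(\eta)|^2}=\frac{1}{g(\xi)^2}+O(C_M|\xi-\eta|^\alpha)$ for small $|\xi-\eta|$ and then separately invokes the arc-chord bound $\Gamma\le M$ to handle $|\xi-\eta|\ge\ep_M$, before combining with~\eqref{eq:building_blocks_linear}. You instead decompose $|\g(\xi)-\g(\eta)|^2$ in the orthonormal frame $\{\T(\xi),\N(\xi)\}$ and feed in~\eqref{eq:building_blocks_linear} and~\eqref{eq:building_blocks_d}, both of which the lemma has already established uniformly over $\TT\times\TT$; the difference-of-quotients identity then closes the argument in one pass without any small/large split. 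The ingredients are the same (the linear expansion~\eqref{eq:building_blocks_linear}, $|g|_*\le M$, and $\Gamma\le M$), but your route makes the denominator expansion and the division step explicit, and it localizes at $g(\zeta)$ directly rather than first expanding around $g(\xi)$ and silently replacing one by the other via H\"older continuity. This is a modest improvement in transparency over the paper's sketch.
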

\begin{proof}
Taylor expansion shows that for any $M>1$, there exists a small $\ep_M>0$ such that for any $0< |\xi -\eta|\leq \ep_M$ we have
\begin{equation}\label{aux22122}
\frac{  |\xi - \eta |^{2 } }{  |\gamma(\xi) - \gamma(\eta)|^{2 } }  = \frac{1}{g(\xi)^2}  + O( C_M|\xi -\eta|^\alpha  );
\end{equation}
this follows from H\"older regularity of $g$ and $\T$.
Then due to the arc-chord $\Gamma\leq M$, we also have that \eqref{aux22122} holds in the regime $|\xi -\eta|\geq \ep_M$ as well (for a sufficiently large $C_M>0$). The conclusion then follows from the estimate \eqref{eq:building_blocks_linear} in Lemma \ref{lemma:building_blocks} and the assumption $\g \in B^M_p$.
\end{proof}

\subsection{Differentiablity of the velocity field}

A key ingredient in the existence proof is the differentiability of the velocity field along the patch boundary. In this subsection, we show that if $\g$ is a simple closed $W^{2,p}(\TT)$ parametric curve, i.e. $\g\in B^M_p$, then the velocity $ v(\g)$ defined according to the Biot-Savart law \eqref{eq:velocity} is also $W^{2,p}(\TT)$.  Note that the argument below proving $L^p$ bounds of $\p_s^2 v $ fails when $p=\infty$, which suggests the $C^2 $ illposedness of the vortex patch.

The main result of this subsection is the following. Note that we specifically use arc-length derivatives as these formulas will be used in Section \ref{sec:illposedness}.

\begin{proposition}\label{prop:derivatives_v}

Let  $1< p < \infty$ and $\g \in B^M_p$ for some $M>1$. Then the velocity $v = v(\g) $ defined according to \eqref{eq:velocity} satisfies $  v  \in W^{2,p}(\TT)$. In particular, $ v $ is differentiable a.e with arc-length derivative
\begin{align}\label{eq:d_v_xi}
\p_s v(\g(\xi)) = P.V. \int_{ \TT   } \T (\eta )\frac{   (\g(\xi  )-\g(\eta)) \cdot \T(\xi ) }{  |\g(\xi)-\g(\eta)|^{2  }} g(\eta)\,d\eta  ,
\end{align}
and  $ \p_s   v $ is a.e. differentiable  with arc-length derivative
\begin{equation}\label{eq:d2_v_xi}
	\begin{aligned}
		 &\p^2_s   v (  \g(\xi)  )  =      -  P.V. \int_{\TT }   \k( \eta )\N ( \eta )    \frac{  (\gamma( \xi )- \gamma(  \eta )) \cdot \T( \xi )}{  |\gamma( \xi )- \gamma(  \eta )|^{2  }}  g(\eta )\, d \eta  \\
		&   + \int_{\gamma } \T( \eta )  \frac{ \big[ (\T( \xi ) - \T(\eta  ))\cdot \T( \xi ) -\k( \xi ) (\gamma(\xi )- \gamma(\eta ))\cdot \N( \xi )  \big]  }{ |\gamma(\xi )- \gamma( \eta )|^{2 } }    g(\eta ) \, d\eta \\
		&  -  2\int_{\gamma } \T(\eta ) \frac{    \big( (  \gamma(\xi)- \gamma( \eta )  ) \cdot \T(\xi) \big) \big( (  \gamma(\xi)- \gamma(\eta )  ) \cdot (\T(\xi) -\T(\eta )) \big)  }{|\gamma(\xi)- \gamma(\eta )|^{4  }}  g(\eta )\, d\eta,
	\end{aligned}
\end{equation}
where the right-hand side of \eqref{eq:d_v_xi} and \eqref{eq:d2_v_xi} are well-defined functions in $W^{1,p}(\TT)$ and $L^p(\TT)$ respectively.

In addition, for any $M>1$ there exists a constant $C_M$ such that
\begin{equation}
| v  |_{W^{2,p}(\TT)}  \leq C_M.
\end{equation}

\end{proposition}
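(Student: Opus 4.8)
Fix $\g\in B^M_p$. Since $g\in W^{1,p}(\TT)$ with $|g|_*\le M$, a routine computation relating $\p_s$- and $\p_\xi$-derivatives reduces the asserted bound $|v|_{W^{2,p}(\TT)}\le C_M$ to the three $L^p$ bounds on $v$, $\p_s v$ and $\p_s^2 v$. The first is immediate from \eqref{eq:velocity}: $|\dot\g|\le M$, and using $\Gamma\le M$ for the lower bound and $|\g(\xi)-\g(\eta)|\le M|\xi-\eta|$ for the upper one, $\sup_{\xi}\int_{\TT}\big|\ln|\g(\xi)-\g(\eta)|\big|\,d\eta\le C_M$. So the real content is to (i) produce the candidate identities \eqref{eq:d_v_xi}--\eqref{eq:d2_v_xi}, (ii) show that their right-hand sides are well defined --- as principal values where indicated, absolutely convergent otherwise --- and lie in $L^p(\TT)$ with norm $\le C_M$, and (iii) verify that they really are $\p_s v$ and $\p_s^2 v$; then $\p_s v\in W^{1,p}(\TT)$ follows from (ii), giving $v\in W^{2,p}(\TT)$.

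\textbf{Deriving the formulas.} Differentiating \eqref{eq:velocity} in $\xi$, dividing by $g(\xi)$, and writing $\dot\g(\eta)=g(\eta)\T(\eta)$ gives the kernel $\frac{(\g(\xi)-\g(\eta))\cdot\T(\xi)}{|\g(\xi)-\g(\eta)|^{2}}$, which by Corollary \ref{corollary:building_blocks} (with $\zeta=\xi$) equals $\frac{1}{g(\xi)(\xi-\eta)}+O\big(C_M|\xi-\eta|^{\alpha-1}\big)$ near the diagonal; this is \eqref{eq:d_v_xi}, necessarily read as a principal value. To differentiate once more it pays to first remove the principal value. Splitting $\T(\xi)=\T(\eta)+(\T(\xi)-\T(\eta))$, the $(\T(\xi)-\T(\eta))$-part has an absolutely convergent kernel by \eqref{eq:building_blocks_M_e}, while the $\T(\eta)$-part equals $-\int_{\TT}\T(\eta)\,\p_\eta\ln|\g(\xi)-\g(\eta)|\,d\eta$; integrating by parts in $\eta$ (no boundary term on $\TT$, since by \eqref{Neq1222} one has $\p_\eta\T(\eta)=-g(\eta)\k(\eta)\N(\eta)$ and the $\log$-singularities at the two endpoints of the excised interval cancel) turns it into the absolutely convergent $-\int_{\TT}g(\eta)\k(\eta)\N(\eta)\ln|\g(\xi)-\g(\eta)|\,d\eta$. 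Differentiating this principal-value-free form of $\p_s v$ once more in $\xi$ and dividing by $g(\xi)$: the $\log$-term reproduces a Hilbert-transform-type principal value --- the first term of \eqref{eq:d2_v_xi} --- and the bounded-kernel term, differentiated with the help of \eqref{Neq1222}, produces exactly the last two terms of \eqref{eq:d2_v_xi}.

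\textbf{The estimates.} To bound the right-hand side of \eqref{eq:d2_v_xi} in $L^p(\TT)$, split each integral at $|\xi-\eta|=\rho_M$; on $|\xi-\eta|\ge\rho_M$ all kernels are $O(C_M)$ because $\Gamma\le M$. Near the diagonal: in the first (principal-value) term, pull out $\N(\xi)g(\xi)$ at the cost of a H\"older factor $O(C_M|\xi-\eta|^{\alpha})$ and invoke Corollary \ref{corollary:building_blocks} to see that this term is $\mathcal{H}$ applied to $\k$ times bounded H\"older factors --- bounded on $L^p(\TT)$ precisely because $1<p<\infty$ --- plus a remainder $\le C_M(1+\mathcal{M}\k(\xi))$. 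For the last two terms no principal value is needed: combining $|\g(\xi)-\g(\eta)|^{-1}=O(C_M|\xi-\eta|^{-1})$ with \eqref{eq:building_blocks_d}, \eqref{eq:building_blocks_linear} and the maximal bounds \eqref{eq:building_blocks_M_d}, \eqref{eq:building_blocks_M_e}, and taking $\zeta=\xi$ so that $\mathcal{M}\k(\zeta)=\mathcal{M}\k(\xi)$, each integrand is $\le C_M\big(|\k(\xi)|+\mathcal{M}\k(\xi)\big)|\xi-\eta|^{\alpha-1}$; since $\alpha-1=-\tfrac1p>-1$, the $\eta$-integral converges to $\le C_M\big(1+|\k(\xi)|+\mathcal{M}\k(\xi)\big)$. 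Hence the right-hand side of \eqref{eq:d2_v_xi} is pointwise bounded by $C_M\big(1+|\k(\xi)|+\mathcal{M}\k(\xi)\big)$ plus the $\mathcal{H}$-term, and so lies in $L^p(\TT)$ with norm $\le C_M$ by the $L^p$-boundedness of $\mathcal{M}$ and of $\mathcal{H}$ --- both valid only for $1<p<\infty$ --- together with $|\k|_{L^p(\TT)}\le C_M$. The same splitting applied directly to \eqref{eq:d_v_xi} yields its $L^p$ bound as well.

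\textbf{Justification, and the main obstacle.} That \eqref{eq:d_v_xi}--\eqref{eq:d2_v_xi} really are the arc-length derivatives of $v$ I would establish by approximation: take smooth curves $\g_n\to\g$ in $W^{2,p}(\TT)$ (e.g.\ by Fourier truncation), which for $n$ large lie in $B^{M+1}_p$ by the arc-chord stability used in the proof of Lemma \ref{lemma:completeness}. For smooth $\g_n$ all the manipulations above are classical, and by step (ii) applied at level $M+1$ one has $|v(\g_n)|_{W^{2,p}(\TT)}\le C_{M+1}$ uniformly in $n$; since $v(\g_n)\to v(\g)$ uniformly and $W^{2,p}(\TT)$ is reflexive ($p<\infty$), $v(\g)\in W^{2,p}(\TT)$ with $|v(\g)|_{W^{2,p}(\TT)}\le C_M$, and the weak $L^p$ limit of $\p_\xi^2 v(\g_n)$ is identified with the right-hand side of \eqref{eq:d2_v_xi} using continuity of $\mathcal{H}$ on $L^p$ for the principal-value term and dominated convergence (against the $\mathcal{M}\k_n$ bounds, once $\k_n\to\k$ in $L^p$) for the rest. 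The main obstacle is exactly this interplay of differentiation with singular integration: one must organize the second differentiation so that a single genuinely singular, Hilbert-transform contribution survives while everything else is absorbed by the maximal function --- equivalently, never differentiate a principal value directly. This is also precisely where $1<p<\infty$ is indispensable, and the breakdown of these bounds at $p=\infty$ is the analytic origin of the $C^{2}$ (that is, $W^{2,\infty}$) ill-posedness proved in Section \ref{sec:illposedness}.
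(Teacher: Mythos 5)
Your proposal is essentially correct but takes a genuinely different route in two respects. For the derivation of \eqref{eq:d2_v_xi}, you split $\T(\xi)=\T(\eta)+(\T(\xi)-\T(\eta))$ in the kernel of \eqref{eq:d_v_xi}, integrate by parts in $\eta$ on the $\T(\eta)$-part (using $\p_\eta\ln|\g(\xi)-\g(\eta)|=-g(\eta)\frac{(\g(\xi)-\g(\eta))\cdot\T(\eta)}{|\g(\xi)-\g(\eta)|^2}$ and checking the boundary terms at the excised singularity vanish), and arrive at a principal-value-free representation
$\p_s v(\xi)=-\int_{\TT}g\k\N\,\ln|\g(\xi)-\g(\eta)|\,d\eta+\int_{\TT}\T(\eta)\tfrac{(\g(\xi)-\g(\eta))\cdot(\T(\xi)-\T(\eta))}{|\g(\xi)-\g(\eta)|^2}g(\eta)\,d\eta$,
which upon differentiating in $\xi$ reproduces \eqref{eq:d2_v_xi} term by term. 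This is a genuinely different organization: the paper never passes through a PV-free form of $\p_s v$ but instead re-parametrizes to arc-length, uses Fubini against test functions and differentiates the inner shifted integrand a.e.; your version localizes the singularity once and for all, and makes the appearance of the three terms in \eqref{eq:d2_v_xi} transparent, which I think is a nice alternative. Your $L^p$ estimates in the third paragraph are substantively the same as the paper's Step 3.

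The second point of divergence is the rigorous justification. The paper justifies differentiation under the integral directly: it tests the $\ep$-truncated integrals against $\varphi\in C^\infty$, integrates by parts, and shows the boundary terms and the error integral over $|s-s'|<\ep$ vanish using the maximal-function bounds. You instead propose approximation by smooth curves $\g_n\to\g$ in $W^{2,p}$, uniform $W^{2,p}$ bounds on $v(\g_n)$, and weak compactness. This strategy works in principle, but the one line ``the weak $L^p$ limit of $\p_\xi^2 v(\g_n)$ is identified with the right-hand side of \eqref{eq:d2_v_xi} using continuity of $\mathcal{H}$ on $L^p$ \dots and dominated convergence against the $\mathcal{M}\k_n$ bounds'' conceals the crux. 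What you actually need is that the map $\g\mapsto$ (RHS of \eqref{eq:d2_v_xi}) is continuous from $B^M_p$ into $L^p$ (or at least into distributions), and the kernels here depend on $\g_n$ in a singular, nonlinear way: for instance the near-diagonal cancellation in the PV term requires comparing $\frac{(\g_n(\xi)-\g_n(\eta))\cdot\T_n(\xi)}{|\g_n(\xi)-\g_n(\eta)|^2}$ with $\frac{1}{g_n(\xi)(\xi-\eta)}$ and controlling the difference of these differences in $n$. This is essentially the content of the paper's later Proposition~\ref{prop:Lip_ds2v} and is not obviously shorter than the direct argument you are replacing; you should either cite forward to such a stability estimate or spell out the generalized dominated-convergence step (domination by $C_M(\mathcal{M}\k(\xi)+\mathcal{M}(\k_n-\k)(\xi))$, pointwise a.e.\ convergence along a subsequence, $L^p$-continuity of $\mathcal{H}$ for the principal-value piece). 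As written, that identification is the one real gap; the rest of your argument --- the reduction of the $\p_\xi$-bounds to $\p_s$-bounds, the $L^\infty$ bound on $v$, the splitting into PV and absolutely convergent parts, and the role of $1<p<\infty$ via boundedness of $\mathcal{H}$ and $\mathcal{M}$ --- is correct and matches the paper's estimates in substance.
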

\begin{proof}
We proceed in several steps: first, show the expressions for derivatives are $W^{1,p}$ and $L^p$ functions; next, show that they are derivatives of the velocity in the distributional sense; then, conclude that velocity is differentiable a.e. with derivative equal to those expressions.

\noindent
{\bf Step $1$:  bounds of \eqref{eq:d_v_xi}}

Let us first show that the right-hand side of \eqref{eq:d_v_xi} is a well-defined function in $L^{\infty}(\TT)$. Appealing to Corollary \ref{corollary:building_blocks} and the bound \eqref{eq:building_blocks_b}, we have
\begin{equation}\label{eq:eq:d_v_xi_aux1}
\begin{aligned}
  &   P.V. \int_{ \TT   } \T (\eta )\frac{   (\g(\xi  )-\g(\eta)) \cdot \T(\xi ) }{  |\g(\xi)-\g(\eta)|^{2  }} g(\eta)\,d\eta  \\
& = P.V. \int_{ \TT   } \T (\xi )\frac{    \xi -  \eta }{  g(\eta)| \xi  - \eta |^{2  }} g(\eta)\,d\eta + \int_{ \TT   } O(C_M |\xi-\eta|^{-1+\alpha}) \,d\eta   \\
&=   \int_{ \TT   } O(C_M |\xi-\eta|^{-1+\alpha}) \,d\eta  \leq C_M;
\end{aligned}
\end{equation}
the first term in the penultimate line is zero due to the odd symmetry.

\noindent
{\bf Step $2$: $W^{1,p}$ differentiability of $v$}


To justify the formal differentiation under the integral, we define $ D_\ep (\xi )$, a function on $\TT$, by
$$
 D_\ep (\xi ) = g(\xi) \int_{ |\xi - \eta |\geq \ep } \T (\eta )\frac{   (\g( \xi )-\g( \eta )) \cdot \T( \xi ) }{  |\g( \xi )-\g(\eta)|^{2  }} g(\eta ) \, d\eta ,
$$
such that $\lim_{\ep \to 0^+}   D_\ep(\xi)   $ equals to the right hand side of \eqref{eq:d_v_xi} times $g(\xi)$. In fact, the proof of \eqref{eq:eq:d_v_xi_aux1} above implies that $|D_\ep ( \xi ) | \leq C_M  $ for any $ \xi \in \TT$ uniformly for $0<\ep\leq 1$. So by the dominated convergence, it suffices to show that for any $ \varphi \in C^\infty  (  \TT )$ we have
\begin{equation}\label{eq:d_v_aux0}
\lim_{\ep \to 0^+} \int_{ \TT } D_\ep    \varphi  \, d \xi  = - \int_{\TT }   v    \varphi'   \, d \xi .
\end{equation}

By Fubini for all $\ep>0$ we have
\begin{align*}
\int_{\TT  } D_\ep   \varphi \, d \xi   = \int_{ \TT  }\int_{ | \xi  -\eta|\geq \ep } g(\xi )\T (\eta) \frac{   (\g( \xi )-\g(\eta)) \cdot \T( \xi ) }{  |\g( \xi )-\g(\eta)|^{2  }} \varphi( \xi ) \,d \xi    \, g(\eta )d\eta  .
\end{align*}
Since in the region $|\xi -\eta| \geq \ep$, we have $g(\xi ) \frac{   (\g( \xi )-\g(\eta)) \cdot \T(\xi) }{  |\g( \xi )-\g(\eta)|^{2  }} = \p_\xi \ln|\g(\xi )-\g(\eta)|$, we can integrate by parts in the variable $\xi $, and the conclusion would follow if we can show that the boundary terms vanish:
\begin{equation}\label{eq:d_v_aux1}
\varphi( \xi  ) \ln|\g(\xi ) -\g(\eta) |  \Big|_{ \xi  =\eta -  \ep  }^{ \xi =\eta
+ \ep  }  \to 0 \quad \text{uniformly as $\ep \to 0$.}
\end{equation}
This is not hard to show by using the $W^{2,p}$, $p>1$ regularity of $\gamma$. Indeed, let  $\alpha=1 - \frac{1}{p}>0$ be such that $  W^{2,p}  (  \TT )\subset C^{1,\alpha} (  \TT )$. Then the mean value theorem with $\g \in B^M_p$ implies  that
\begin{align*}
|\g(\eta) -\g( \eta +\ep)|  & \leq g(\eta) \ep    +C_M \ep^{1+ \alpha} \\
 |\g(\eta) -\g( \eta - \ep)| & \geq g(\eta) \ep -C_M \ep^{1+ \alpha}.
\end{align*}
With these bounds, we can show \eqref{eq:d_v_aux1}.
Indeed, we have
\begin{align*}
\left|  \varphi ( \xi ) \ln|\g( \xi ) -\g(\eta) |  \Big|_{ \xi  =\eta -  \ep  }^{ \xi =\eta
+ \ep  } \right|   &=    \bigg| \big[   \varphi( \eta  +\ep   )  -  \varphi (\eta  - \ep  ) \big]  \ln| \g( \eta +\ep ) -\g( \eta) |\\
&\qquad \quad  + \varphi(\eta -\ep)  \ln \Big|  \frac{\g(\eta+\ep ) -\g(\eta) }{\g(\eta-\ep ) -\g(\eta)}  \Big|  \bigg| \\
 &\lesssim \ep |\varphi'|_{L^\infty(\TT)} \big(1+ |\ln  \ep   |\big) + \ln \Big|\frac{1+ C_M \ep^\alpha }{1- C_M \ep^\alpha} \Big| \to 0 \quad \text{as $\ep \to 0^+$}.
\end{align*}
With \eqref{eq:d_v_aux1} proved, the integration by parts is justified and we have established \eqref{eq:d_v_aux0}. So $v \in W^{1,p}(\TT)$ and \eqref{eq:d_v_xi} holds.

\noindent
{\bf Step $3$: bounds of \eqref{eq:d2_v_xi}}

For simplicity, let us still denote by $\p^2_s   v$ the right-hand side of \eqref{eq:d2_v_xi}. To show that this object is in $L^p(\TT)$, we consider the decomposition
\begin{align*}
		 \p^2_s   v(\xi )  & =   \sum_{1\leq i\leq 4}  K_i(\xi )
\end{align*}
where terms $K_i$ are explicitly defined below
\begin{align*}
K_1 &  =  -P.V.  \int_{\TT }   \k( \eta )\N ( \eta )    \frac{  (\gamma( \xi )- \gamma(  \eta )) \cdot \T( \xi )}{  |\gamma( \xi )- \gamma(  \eta )|^{2  }}  g(\eta )\, d \eta   \\
K_2 &=
  \int_{\TT } \T( \eta )  \frac{  (\T( \xi ) - \T(\eta  ))\cdot \T( \xi )   }{ |\gamma(\xi )- \gamma( \eta )|^{2 } }    g(\eta ) \, d\eta\\
K_3 &= - \k( \xi )\int_{\TT } \T( \eta )  \frac{  (\gamma(\xi )- \gamma(\eta ))\cdot \N( \xi )  \big] }{ |\gamma(\xi )- \gamma( \eta )|^{2 } }    g(\eta ) \, d\eta\\
K_4 &=-2\int_{\TT } \T(\eta ) \frac{    \big( (  \gamma(\xi)- \gamma( \eta )  ) \cdot \T(\xi) \big) \big( (  \gamma(\xi)- \gamma(\eta )  ) \cdot (\T(\xi) -\T(\eta )) \big)  }{|\gamma(\xi)- \gamma(\eta )|^{4  }}  g(\eta )\, d\eta.
\end{align*}

\noindent
{\bf Estimate of $K_1$: }

By Corollary \ref{corollary:building_blocks},
\begin{align*}
 | K_1 	 |_{L^p(\TT )}   &\leq  \left[   \int_{ \TT }  \left|   \int_{\TT}   \k(\eta  )\N ( \eta)   \frac{  ( \xi  - \eta  )  }{  g(\eta)| \xi - \eta  |^{2   }}  g(\eta ) \, d \eta    \right|^p  \, d\xi  \right]^\frac{1}{p} \\
 & \quad + O\left( C_M  \int_{\TT}  \left|   \int_{ \TT }  | \k(\eta )|  | \xi-   \eta  |^{ \alpha -1   }   \, d \eta    \right|^p  \, d\xi    \right)^\frac{1}{p}.
\end{align*}
Since $1<p<\infty$ and $ |\k|_{L^p(\TT)} \leq M   $, the $L^p$-boundedness of the Hilbert transform and Young's inequality imply that $ | K_1 	 |_{L^p(\TT )}    \leq C_M$.

\noindent
{\bf Estimate of $K_2$: }

By the maximal estimate \eqref{eq:building_blocks_M_d} from Lemma \ref{lemma:building_blocks} with $\zeta=\xi$,
\begin{align*}
| K_2 (\xi)| & \leq \int_{\TT }\Big|  \frac{  (\T( \xi ) - \T(\eta  ))\cdot \T( \xi )   }{ |\gamma(\xi )- \gamma( \eta )|^{2 } }    g(\eta ) \Big| \, d\eta\\
& \leq C_M  \int_{\TT }  \mathcal{M}\k ( \xi ) |\xi -\eta |^{-1+\alpha}  \, d\eta,
\end{align*}
so Young's inequality implies that $ | K_2 	 |_{L^p(\TT )}    \leq C_M$.

\noindent
{\bf Estimate of $K_3$: }

Since $|\k|_{L^p (\TT)} \leq C_M$, it suffices to show that
$$
\sup_{\xi} \Big|  \int_{\TT } \T( \eta )  \frac{    (\gamma(\xi )- \gamma(\eta ))\cdot \N( \xi )  \big] }{ |\gamma(\xi )- \gamma( \eta )|^{2 } }    g(\eta ) \, d\eta \Big| < \infty.
$$
Thanks to \eqref{eq:building_blocks_d}, we have
\begin{align*}
\sup_{\xi} \Big| \int_{\TT } \T( \eta )  \frac{    (\gamma(\xi )- \gamma(\eta )\cdot \N( \xi )  \big] }{ |\gamma(\xi )- \gamma( \eta )|^{2 } }    g(\eta ) \, d\eta  \Big| \leq C_M \sup_{\xi} \int_{\TT }  |\xi -\eta|^{-1+\alpha}    \, d\eta,
\end{align*}
and thus the bound for $K_3$ is established.

\noindent
{\bf Estimate of $K_4$: }

For $K_4$, we apply absolute value to the integrand and then the estimate \eqref{eq:building_blocks_M_e} to obtain
\begin{align*}
K_4 (\xi) & \leq C_M \int_{\TT } \left|   \frac{    \big( (  \gamma(\xi)- \gamma( \eta )  ) \cdot \T(\xi) \big) \big( (  \gamma(\xi)- \gamma(\eta )  ) \cdot (\T(\xi) -\T(\eta )) \big)  }{|\gamma(\xi)- \gamma(\eta )|^{4  }} \right|  \, d\eta.\\
&\leq C_M \int_{\TT } \left|   \frac{       (  \gamma(\xi)- \gamma(\eta )  ) \cdot (\T(\xi) -\T(\eta ))    }{|\gamma(\xi)- \gamma(\eta )|^{3  }} \right|  \, d\eta\\
&\leq C_M \int_{\TT } \mathcal{M}\k(\xi)  |\xi - \eta|^{-1+\alpha} \, d\eta.
\end{align*}
Since $|\k|_{L^p(\TT) } \leq C_M$, we conclude that $|K_4|_{L^p(\TT) } \leq C_M$.

We have thus shown the right-hand side of \eqref{eq:d2_v_xi} defines a function in $L^p(\TT)$.

\noindent
{\bf Step $4$: $W^{2,p}$ differentiability of $v$}

To establish  \eqref{eq:d2_v_xi}, we will rewrite the integrals involved in the arc-length parameter and differentiate in $s.$ This will reduce computations compared
to working in Lagrangian labels.
Consider the arc-length $s = \ell(\xi) \equiv \int_0^\xi g(\tau ) \, d\tau $ of the curve, and we are going to denote $\gamma(s)$ the same curve in the arc-length variable $s$ and similarly for its tangent $\T(s)$, normal $\N(s)$, curvature $\k(s)$, and the velocity $v(s) $. In this parametrization, the right-hand side of \eqref{eq:d2_v_xi} becomes
\begin{equation}\label{eq:d2_v_aux000}
\begin{aligned}
       &-  \lim_{\ep\to 0 } \int_{|\ell^{-1}(s) -\ell^{-1}(s') | \geq \ep }   \k( s' )\N ( s' )    \frac{  (\gamma( s )- \gamma(  s' )) \cdot \T( s )}{  |\gamma( s )- \gamma(  s' )|^{2  }}   \, d s'  \\
		&   + \int_{\gamma } \T( s' )  \frac{ \big[ (\T( s ) - \T(s'  ))\cdot \T( s ) -\k(s ) (\gamma(s )- \gamma(s' ))\cdot \N( s )  \big]  }{ |\gamma( s )- \gamma( s' )|^{2 } }     \, ds'  \\
		&  -  2\int_{\gamma } \T(s' ) \frac{    \big( (  \gamma(  s )- \gamma( s'  )  ) \cdot \T(  s ) \big) \big( (  \gamma(s  )- \gamma(  s' )  ) \cdot (\T(s  ) -\T(  s'  )) \big)  }{|\gamma(  s  )- \gamma( s' )|^{4  }}   \, d s',
\end{aligned}
\end{equation}
where $\ell^{-1}$ is the inverse of the map $\xi \mapsto \ell(\xi) $.
Now we show that the first term in \eqref{eq:d2_v_aux000} is the Cauchy principal value integral in $s.$ Since the inverse map $\ell^{-1}$ is $C^{1,\alpha}$ with $\alpha =1 -\frac{1}{p}>0$, the (symmetric) difference of the sets
	$$
	 D_\ep:= \{s' : |s-s'| \geq  \ep \} \Delta \{s': |\ell^{-1}(s)-\ell^{-1}(s')|  \geq g( \ell^{-1}(s) )  \ep\}
	 $$
has Lebesgue measure at most $C_M \ep^{1+\alpha}$, uniformly in $s$. Indeed, by the fundamental theorem of calculus and $C^{1,\alpha}$ regularity of $\ell^{-1}$, for all
sufficiently small $\ep,$ the set $D_\ep$ can be covered by two intervals of length $C_M \ep^{1+\alpha}$ centered at $s \pm  \ep.$  This smallness condition on $\ep$ can be
made uniformly in $s$ since we are on a compact set.
Hence, we have that the difference between the first term in \eqref{eq:d2_v_aux000} (with modulated approximation parameter $  \frac{\ep}{g( \ell^{-1}(s) )}   $) and its counterpart of the Cauchy principal value  satisfies
\begin{align*}
 \Big|  	\int_{D_\ep }   \k( s' )\N ( s' )   & \frac{  (\gamma( s )- \gamma(  s' )) \cdot \T( s )}{  |\gamma( s )- \gamma(  s' )|^{2  }}   \, d s'    \Big|  \leq C_M \int_{D_\ep }|\k(s')| |s-s'|^{-1}	 \\
 & \leq  C_M \ep^{\alpha} \big[\mathcal{M} \k ( s -   \ep) +  \mathcal{M} \k ( s+ \ep) \big]
\end{align*}
where we have used the definition of the maximal function and the bound $|s-s'|^{-1} \leq C_M \ep^{-1}$ on $D_\ep$.
Since $\k \in L^p$, this term converges to $0$ in $L^p$ as $\ep\to 0$. 
Therefore, the first term in \eqref{eq:d2_v_aux000} is equal to  $ -P.V.    \int_{\gamma }   \k( s' )\N ( s' )    \frac{  (\gamma( s )- \gamma(  s' )) \cdot \T( s )}{  |\gamma( s )- \gamma(  s' )|^{2  }}   \, d s' $ almost everywhere $s\in \frac{L}{2\pi} \TT$. A similar reasoning also shows that in the arc-length \eqref{eq:d_v_xi} becomes $\p_s v = P.V.    \int_{\gamma }   \T ( s' )    \frac{  (\gamma( s )- \gamma(  s' )) \cdot \T( s )}{  |\gamma( s )- \gamma(  s' )|^{2  }}   \, d s' $. Hence to verify \eqref{eq:d2_v_xi}, it suffices to show that
\begin{equation}\label{eq:d2_v_aux0}
\begin{aligned}
\p_s(\p_s  v)  =       &-P.V.    \int_{\gamma }   \k( s' )\N ( s' )    \frac{  (\gamma( s )- \gamma(  s' )) \cdot \T( s )}{  |\gamma( s )- \gamma(  s' )|^{2  }}   \, d s'  \\
		&   + \int_{\gamma } \T( s' )  \frac{ \big[ (\T( s ) - \T(s'  ))\cdot \T( s ) -\k(s ) (\gamma(s )- \gamma(s' ))\cdot \N( s )  \big]  }{ |\gamma( s )- \gamma( s' )|^{2 } }     \, ds'  \\
		&  -  2\int_{\gamma } \T(s' ) \frac{    \big( (  \gamma(  s )- \gamma( s'  )  ) \cdot \T(  s ) \big) \big( (  \gamma(s  )- \gamma(  s' )  ) \cdot (\T(s  ) -\T(  s'  )) \big)  }{|\gamma(  s  )- \gamma( s' )|^{4  }}   \, d s'.
\end{aligned}
\end{equation}
Since Step 3 above also shows the right-hand side of \eqref{eq:d2_v_aux0} is $L^p $, it suffices to show  that for any $ \varphi \in C^\infty (  \gamma )$,
\begin{equation}\label{eq:d2_v_aux1}
\lim_{\ep \to 0^+} \int_{\gamma  }   \int_{ |s - s' |\geq \ep } \T ( s'  )\frac{   (\g( s )-\g( s' )) \cdot \T( s ) }{  |\g(s )-\g(  s' )|^{2  }}  \, d s'        \p_s \varphi (s) \, d s  = - \int_{\gamma }   
\eqref{eq:d2_v_aux0}  \varphi (s) \, ds .
\end{equation}
We proceed to a proof of \eqref{eq:d2_v_aux1}. Reparametrizing the inner integral  via   $ s' \mapsto s + s'  $ and then applying Fubini yield
\begin{align}
\lim_{\ep \to 0^+} \int_{\gamma  }   & \int_{ |s - s' |\geq \ep } \T ( s'  )\frac{   (\g( s )-\g( s' )) \cdot \T( s ) }{  |\g(s )-\g(  s' )|^{2  }}  \, d s'       \p_s \varphi (s) \, d s  \nonumber \\
&  = \lim_{\ep \to 0^+}\int_\gamma \int_{|  s' | \geq \ep } \T (s  + s' )\frac{   (\g(s )-\g (  s  + s' )) \cdot \T( s ) }{  |\g( s )-\g(  s  + s' )|^{2  }}   \,d s'   \p_s \varphi (s ) \, ds \nonumber \\
&= \lim_{\ep \to 0^+}\int_{|  s' | \geq \ep } \int_\gamma \underbrace{\T (s  + s' )\frac{   (\g(s )-\g (  s  + s' )) \cdot \T( s ) }{  |\g( s )-\g(  s  + s' )|^{2  }} }_{ \text{a.e.  differentiable with respect to $s$} }   \p_s \varphi (s) \, ds \,   d s' \label{eq:d_v_aux2}.
\end{align}
Since $\g$ is a simple $W^{2,p} $ curve, for each fixed   $|s'| \geq \ep$, the inner integrand in \eqref{eq:d_v_aux2} is  a.e.  differentiable with respect to $s$, with derivative
\begin{equation}\label{eq:d_v_aux3}
	\begin{aligned}
 & -  \k(s  + s' )\N (s  + s' ) \frac{   (\gamma(  s  )- \gamma(  s  + s'  )) \cdot \T(  s  )}{  |\gamma( s  )- \gamma( s  + s' )|^{2  }}   \\
		& \quad +  \T( s  + s'  ) \frac{ \big[ (\T(  s  ) - \T( s  + s'  ))\cdot \T(s ) - (\gamma(  s )- \gamma( s  + s' ))\cdot \k( s )\N( s )  \big] }{ |\gamma( s )- \gamma( s  + s' )|^{2  } }     \\
		&\qquad -2   \frac{   \T( s  + s' ) \big( (  \gamma( s  )- \gamma( s  + s'  )  ) \cdot \T( s ) \big) \big( (  \gamma( s )- \gamma( s  + s'  )  ) \cdot (\T( s ) -\T( s  + s' )) \big)  }{|\gamma( s )- \gamma( s  + s' )|^{4  }} .
	\end{aligned}
\end{equation}
As a result, based on the $L^p$ estimates we proved earlier, we can integrate by parts in the $s $ variable in \eqref{eq:d_v_aux2} and use Fubini once again to obtain that
\begin{align*}
\lim_{\ep \to 0^+} \int_{\gamma  }   & \int_{ |s - s' |\geq \ep } \T ( s'  )\frac{   (\g( s )-\g( s' )) \cdot \T( s ) }{  |\g(s )-\g(  s' )|^{2  }}  \, d s'       \p_s \varphi (s) \, d s \\
&= -\lim_{\ep \to 0^+}\int_{|  s' | \geq \ep } \int_\gamma \eqref{eq:d_v_aux3} \varphi(s  ) \, d s \,d s' \\
&= -\lim_{\ep \to 0^+} \int_\gamma \int_{| s' | \geq \ep } \eqref{eq:d_v_aux3} \varphi(s ) \, ds' \,d s .
\end{align*}
We reparametrize back $s'+s \mapsto s'$ and compare the above to \eqref{eq:d2_v_aux1} to obtain
\begin{align*}
\lim_{\ep \to 0^+} \int_{\gamma  }    & \int_{ |s - s' |\geq \ep } \T ( s'  )\frac{   (\g( s )-\g( s' )) \cdot \T( s ) }{  |\g(s )-\g(  s' )|^{2  }}  \, d s'       \p_s \varphi (s) \, d s  \\
&  = - \int_{\gamma }   \p_s^2 v   \varphi(s)  \, d s +\lim_{\ep \to 0^+}  \int_\gamma    \varphi(s)  \int_{   |  s  - s' | < \ep} Z ( s  ,s')\, ds'  \,d  s
\end{align*}
where the integrand $Z ( s  , s') $ corresponds to the last two lines of \eqref{eq:d_v_aux3} (since the principal value term cancels by definition) and is given by
\begin{align*}
Z ( s , s' ):= &    -     \T(s' )  \frac{ \big[ (\T(s ) - \T(s' ))\cdot \T(s ) -\k(s ) (\gamma(s)- \gamma(s' ))\cdot \N( s )  \big] }{ |\gamma(s )- \gamma( s' )|^{2 } }      \\
		&\qquad + 2   \T(s' ) \frac{    \big( (  \gamma( s )- \gamma(s' )  ) \cdot \T(s ) \big) \big( (  \gamma( s )- \gamma( s' )  ) \cdot (\T( s ) -\T( s' )) \big)  }{|\gamma( s )- \gamma( s' )|^{4  }}.
\end{align*}
To show \eqref{eq:d2_v_aux1} holds, we need to show the error vanishes:
\begin{equation}\label{eq:d_v_auxZerror}
 \begin{aligned}
 \int_\g   \varphi  \int_{   |  s -s'| < \ep} Z ( s  ,s' )\, ds'  \,d  s    \to 0 \quad \text{as $\ep \to 0$} .
\end{aligned}
\end{equation}

This is essentially done in Step 3, and here we present a proof of \eqref{eq:d_v_auxZerror} using Lemma \ref{lemma:building_blocks} for completeness.  Observe that \eqref{eq:building_blocks_M_d}, \eqref{eq:building_blocks_d}, and \eqref{eq:building_blocks_f} imply
\begin{align}
\Big| \T( s'  )  \frac{ \big[ (\T( s  ) - \T(s' ))\cdot \T( s  )   \big] }{ |\gamma( s )- \gamma( s' )|^{2 } }  \Big| &\leq C_M  \mathcal{M}\k( s ) | s -  s' |^{-1+\alpha} \label{eq:d_v_auxZ1a}\\
 \Big |\T( s'  )  \frac{ \k(s  ) (\gamma(s  )- \gamma( s'  ))\cdot \N(s  )  \big] }{ |\gamma(s  )- \gamma( s' )|^{2 } }   \Big| &  \leq   C_M \k( s ) |s - s'|^{-1+\alpha}  \label{eq:d_v_auxZ1b};
\end{align}
while \eqref{eq:building_blocks_M_e} and \eqref{eq:building_blocks_f} imply
\begin{align}\label{eq:d_v_auxZ1c}
 \Big| \T( s'  ) &\frac{    \big( (  \gamma(s )- \gamma( s' )  ) \cdot \T( s  ) \big)  \big( (  \gamma( s  )- \gamma( s' )  ) \cdot (\T( s  ) -\T( s'  )) \big)  }{|\gamma( s )- \gamma( s' )|^{4  }} \Big|\nonumber \\
  &\qquad \leq  C_M \mathcal{M}\k( s ) | s  -  s' |^{-1+\alpha} .
\end{align}
These estimates \eqref{eq:d_v_auxZ1a}--\eqref{eq:d_v_auxZ1c} imply for all $s' \neq s $ the bound
\begin{align}\label{eq:d_v_auxZ2}
 \left|  Z ( s  , s' )\right| & \leq C_M \Big( \mathcal{M}\k(s ) |s  -s' |^{-1+\alpha } + \k(s ) | s - s' |^{-1 +\alpha}   \Big).
\end{align}
By \eqref{eq:d_v_auxZ2}, the H\"older inequality, and the boundedness of the maximal function in $L^p$ for $p>1$, we have that
\begin{align*}
 \lim_{\ep \to 0^+} &\bigg|\int_\g \varphi(s ) \int_{   | s  - s' | < \ep} Z \, d s'  \,d s \bigg|   \\
 & \leq C_M |\varphi|_{L^\infty }\lim_{\ep \to 0^+}    \int_{   | s | < \ep}   |s |^{-1+ \alpha}   \,d s     = 0.
\end{align*}
Now that \eqref{eq:d2_v_aux1} is established, we have that $ \p_s^2 v \in L^{ p} $ and  \eqref{eq:d2_v_aux0} holds, and thus in the original label, \eqref{eq:d2_v_xi} holds as well.

\end{proof}

\subsection{Contraction estimates of the solution map}

The last ingredient for the Banach fixed-point argument is the Lipschitz continuity for the nonlinear map $\gamma \mapsto v(\g)  $ in the Sobolev space $W^{2,p}$. The main results are Proposition \ref{prop:Lip_v} and Proposition \ref{prop:Lip_ds2v} below.

We fix $1<p<\infty$ in this subsection and  let $\g_i  \in B^M_p$ for $i=1,2$. We use the notation $ g_i $, $\T_i$, $\N_i$ and $\k_i$ to denote the arc-length, tangent, normal, and curvature of the curve $\g_i$. In addition $v_i:\TT\to \RR^2$ denotes the velocity associated to $\g_i$, and $ \p_s v_i   = \frac{1}{g_i} \dot{v_i}$ denotes the corresponding differentiation in arc-length on the curve $\g_i$.

Since we will be frequently taking difference between functions defined by $\g_1 $ and $\g_2$, we introduce the notation $\Delta \left[f_i \right] := f_1 -f_2$ for any functions $f_i$ defined by $\g_i $ on $\TT$. For instance, $\Delta [\g_i](\xi) = \g_1(\xi) -\g_2(\xi)  $ and $\Delta \left[ \T_i(\xi) - \T_i(\eta) \right]= \left[ \T_1(\xi) - \T_1(\eta) \right]-\left[ \T_2(\xi) - \T_2(\eta) \right]    $. We will frequently use the telescoping formula
\begin{equation}\label{aux22122a}
\Delta [f_i g_i] = \Delta [f_i] g_1 +f_2 \Delta [ g_i].
\end{equation}
For brevity, we denote by  $\delta \geq 0$ the distance between two sets of data in $X_p$:
\begin{equation}\label{aux22122b}
\delta=  |\g_1-\g_2|_{W^{2,p} (\TT)} .
\end{equation}
These conventions allow for a more streamlined argument.

Again, we start with a few basic estimates that will serve as the ``building blocks'' in the estimation below. Recall that $\alpha:= 1- \frac{1}{p}$  and $C_M>0$ denotes a constant   depending on $M$ and $p$ that may change from line to line.

The assumption on $\g_i$ and the definition of $\delta$ imply directly the estimates
\begin{subequations}
\begin{align}
|\Delta [\T_i]|_{L^\infty(\TT)} &\leq C_M \delta \label{eq:Delta_T_g_a}\\
|\Delta [g_i]|_{L^\infty(\TT)} \leq C|\Delta [g_i]|_{W^{1,p}} &\leq C_M \delta. \label{eq:Delta_T_g_b}\\
|\Delta [\k_i]|_{L^p(\TT)} &\leq C_M \delta, \label{eq:Delta_T_g_c}
\end{align}
\end{subequations}
which by the fundamental theorem of calculus and simple telescoping further implies the following set of estimates
\begin{subequations}
\begin{align}
 \big| \Delta \left[\g_i(\xi)  - \g_i(\eta)\right] \big| &\leq C_M \delta |\xi -\eta| \label{eq:Delta_points_a}\\
\big| \Delta \left[\T_i(\xi)  - \T_i(\eta)\right] \big| &\leq C_M \delta |\xi -\eta|^{\alpha}\label{eq:Delta_points_b}\\
\big| \Delta \left[g_i(\xi)  - g_i(\eta)\right] \big| &\leq C_M \delta |\xi -\eta|^{\alpha}. \label{eq:Delta_points_c}
\end{align}
\end{subequations}

The following lemma is our main building block for proving the Lipschitz continuity for the map $\gamma \mapsto v(\g)  $ in $W^{2,p}(\TT)$.

\begin{lemma}\label{lemma:building_blocks_difference}
Let $\g_i \in B_p^M$, $i= 1,2$. For any $\xi ,\eta \in \TT$,
\begin{subequations}
\begin{align}
 \Delta \big[ (\g_{i}(\xi ) -\g_i( \eta ))  \cdot \T_i( \xi) \big] &= \Delta [g_i](\xi)  (\xi -\eta ) + O(C_M \delta |\xi -\eta|^{1+\alpha}) \label{eq:building_blocks_difference_a}\\
 \Delta \big[ \big (\gamma_{i}(\xi)- \gamma_{i}(\eta )\big)\cdot \N_{i}(\xi) \big]    & = O(C_M \delta |\xi -\eta|^{1+\alpha })\label{eq:building_blocks_difference_b}
\end{align}
\end{subequations}
and there exists a bounded function $ |C_\delta (\xi )| \leq C_M \delta $ such that
\begin{align}
\Delta\left[\frac{ 1    }{ |\gamma_{i}(\xi)- \gamma_{i}(\eta)|^{2  } }     \right] &=  \frac{ C_\delta (\xi )
     }{| \xi -\eta|^{2  }}  +O(C_M \delta |\xi -\eta|^{-2+\alpha}).\label{eq:building_blocks_difference_g}
 \end{align}
In addition, for any $\xi ,\eta \in \TT$ and any $ \zeta \in \TT$ such that $|\eta-\zeta|, |\xi-\zeta| \leq |\xi-\eta|$
we have the maximal estimates
\begin{subequations}
\begin{equation}
\begin{aligned}
&\left| \Delta\left[    \left( \T_i(\xi) -\T_i(\eta)  \right)  \cdot\T_i(\xi) \right] \right| \\
& \qquad \leq  C_M|\xi-\eta|^{1  +\alpha}  \Big( \delta   \max_i \mathcal{M} \k_i (\zeta) )  +  \mathcal{M} \Delta[\k_i] (\zeta)   \Big)   ,  \label{eq:building_blocks_difference_Ma}\\
\end{aligned}
\end{equation}
\begin{equation}
\begin{aligned}
& \left| \Delta\left[   \left(  \g_i (\xi) - \g_i(\eta) \right) \left( \T_i(\xi) -\T_i(\eta)  \right)    \right] \right|  \\
& \qquad \leq   C_M|\xi-\eta|^{2  +\alpha}  \Big( \delta   \max_i \mathcal{M} \k_i (\zeta) )  +  \mathcal{M} \Delta[\k_i] (\zeta)   \Big)    \label{eq:building_blocks_difference_Mb}.
\end{aligned}
\end{equation}
\end{subequations}
\end{lemma}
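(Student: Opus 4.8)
The plan is to prove the six identities of Lemma~\ref{lemma:building_blocks_difference} by differentiating the telescoping structure already set up, reducing every claim to the elementary bounds \eqref{eq:Delta_T_g_a}--\eqref{eq:Delta_points_c} together with the single-curve building blocks of Lemma~\ref{lemma:building_blocks} and the maximal bound \eqref{eq:building_blocks_aux1}. For \eqref{eq:building_blocks_difference_a}, I would write $(\g_i(\xi)-\g_i(\eta))\cdot\T_i(\xi)=\int_\eta^\xi \T_i(\tau)\cdot\T_i(\xi)\,g_i(\tau)\,d\tau$, expand $\Delta$ by the telescoping formula \eqref{aux22122a} into a term carrying $\Delta[\T_i(\tau)\cdot\T_i(\xi)]$, a term carrying $\Delta[g_i(\tau)]$, and a leading term $\int_\eta^\xi \Delta[g_i](\xi)\,d\tau=\Delta[g_i](\xi)(\xi-\eta)$ (using $\T_1\cdot\T_1=1$ to leading order and absorbing $\Delta[g_i(\tau)]-\Delta[g_i(\xi)]$ via \eqref{eq:Delta_points_c}); each error integrand is $O(C_M\delta|\xi-\eta|^{\alpha})$ pointwise by \eqref{eq:building_blocks_a}, \eqref{eq:Delta_points_b}, \eqref{eq:Delta_T_g_b}, \eqref{eq:Delta_points_c}, so integrating over an interval of length $|\xi-\eta|$ gives the stated $O(C_M\delta|\xi-\eta|^{1+\alpha})$. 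Claim \eqref{eq:building_blocks_difference_b} is the same computation with $\N_i(\xi)$ in place of $\T_i(\xi)$, where now there is no leading term because $\Delta[\T_i(\tau)\cdot\N_i(\xi)]$ is already $O(C_M\delta|\xi-\eta|^{\alpha})$ (telescoping \eqref{eq:building_blocks_c} with \eqref{eq:Delta_T_g_a}).

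For \eqref{eq:building_blocks_difference_g} I would start from the identity $\Delta[|\g_i(\xi)-\g_i(\eta)|^{-2}] = -\,\Delta[|\g_i(\xi)-\g_i(\eta)|^2]\,\bigl(|\g_1(\xi)-\g_1(\eta)|^2|\g_2(\xi)-\g_2(\eta)|^2\bigr)^{-1}$, bound the two denominators below by $C_M^{-1}|\xi-\eta|^2$ each using \eqref{eq:building_blocks_f}, and control $\Delta[|\g_i(\xi)-\g_i(\eta)|^2]$ by writing it as $(\g_1(\xi)-\g_1(\eta)+\g_2(\xi)-\g_2(\eta))\cdot\Delta[\g_i(\xi)-\g_i(\eta)]$; the second factor is $O(C_M\delta|\xi-\eta|)$ by \eqref{eq:Delta_points_a} and the first is $O(C_M|\xi-\eta|)$. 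This yields a bound $O(C_M\delta|\xi-\eta|^{-2})$, which I would then split into its ``leading constant'' part by setting $C_\delta(\xi):=-|\xi-\eta|^2\,\Delta[|\g_i(\xi)-\g_i(\eta)|^2]\cdot(\text{denominators})^{-1}$ evaluated in the limit $\eta\to\xi$ — concretely $C_\delta(\xi)=\Delta[g_i^2](\xi)/g_1(\xi)^2 g_2(\xi)^2$-type expression — and showing the remainder gains the extra $|\xi-\eta|^\alpha$ from the Hölder continuity of $g_i$ and $\T_i$ exactly as in the proof of Corollary~\ref{corollary:building_blocks} (Taylor expansion \eqref{aux22122}); for $|\xi-\eta|\ge\ep_M$ the arc-chord bound makes everything uniformly bounded so one absorbs it into $C_M$.

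For the two maximal estimates \eqref{eq:building_blocks_difference_Ma}--\eqref{eq:building_blocks_difference_Mb}, the model is the derivation of \eqref{eq:building_blocks_M_d} and \eqref{eq:building_blocks_M_e} in Lemma~\ref{lemma:building_blocks}: write $(\T_i(\xi)-\T_i(\eta))\cdot\T_i(\xi) = -\int_\eta^\xi \k_i(\tau)g_i(\tau)\,\N_i(\tau)\cdot\T_i(\xi)\,d\tau$, apply $\Delta$ with the telescoping formula, and distribute the difference onto the three factors $\k_i$, $g_i\N_i(\tau)$, and $\N_i(\tau)\cdot\T_i(\xi)$. The term where $\Delta$ hits $\k_i$ produces $\int_\eta^\xi |\Delta[\k_i](\tau)|\cdot|\N_1(\tau)\cdot\T_1(\xi)|\,g_1(\tau)\,d\tau$, and since $|\N_1(\tau)\cdot\T_1(\xi)|\le C_M|\xi-\eta|^\alpha$ by \eqref{eq:building_blocks_c} this is $\le C_M|\xi-\eta|^\alpha\int_\eta^\xi|\Delta[\k_i](\tau)|\,d\tau\le C_M|\xi-\eta|^{1+\alpha}\mathcal M\Delta[\k_i](\zeta)$ by \eqref{eq:building_blocks_aux1}; the terms where $\Delta$ hits $g_i$, $\N_i$, or $\N_i\cdot\T_i(\xi)$ each carry a factor $C_M\delta|\xi-\eta|^\alpha$ (from \eqref{eq:Delta_T_g_b}, \eqref{eq:Delta_T_g_a}, and the telescoped \eqref{eq:building_blocks_c}) together with $\int_\eta^\xi|\k_i(\tau)|\,d\tau\le C_M|\xi-\eta|\mathcal M\k_i(\zeta)$, giving $\delta\max_i\mathcal M\k_i(\zeta)$ with the right power. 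Estimate \eqref{eq:building_blocks_difference_Mb} is the double-integral version: expand $(\g_i(\xi)-\g_i(\eta))\cdot(\T_i(\xi)-\T_i(\eta)) = -\int_\eta^\xi\int_\eta^{\tau}\cdots$ (two applications of the fundamental theorem, as in the proof of \eqref{eq:building_blocks_M_c}/\eqref{eq:building_blocks_M_e}), take $\Delta$ through, and repeat the same case analysis, gaining one extra power of $|\xi-\eta|$ from the second integration. I expect the main obstacle to be \eqref{eq:building_blocks_difference_g}: isolating a genuinely $\eta$-\emph{independent} leading coefficient $C_\delta(\xi)$ from the difference of two $|\xi-\eta|^{-2}$ singularities — rather than merely bounding the difference by $O(\delta|\xi-\eta|^{-2})$ — requires the careful Taylor expansion and the split into small/large $|\xi-\eta|$ regimes, and this is the place where one must be most careful that all implicit constants remain uniform over $\g_i\in B^M_p$.
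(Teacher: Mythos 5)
Your proposal follows essentially the same route as the paper's proof: for \eqref{eq:building_blocks_difference_a} the telescoping-plus-FTC decomposition and the isolation of $\Delta[g_i](\xi)(\xi-\eta)$ match; for \eqref{eq:building_blocks_difference_g} your factorization, your identification of the explicit leading coefficient (indeed $\pm\Delta[g_i^2]/(g_1^2g_2^2)$, since $\Delta[\dot\g_i]\cdot(\dot\g_1+\dot\g_2)=\Delta[g_i^2]$), and your appeal to the Taylor expansion \eqref{aux22122} from Corollary~\ref{corollary:building_blocks} are exactly the paper's argument; and the case-analysis for \eqref{eq:building_blocks_difference_Ma}--\eqref{eq:building_blocks_difference_Mb}, distributing $\Delta$ across $\k_i$, $g_i$ and $\N_i(\tau)\cdot\T_i(\xi)$ and using \eqref{eq:building_blocks_aux1} to feed in the maximal function, is also the same. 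The only genuine divergence is minor: for \eqref{eq:building_blocks_difference_b} you use a single application of the fundamental theorem and then show $\Delta[\T_i(\tau)g_i(\tau)\cdot\N_i(\xi)]=O(C_M\delta|\xi-\tau|^\alpha)$ pointwise (telescoping and \eqref{eq:Delta_points_b}, \eqref{eq:building_blocks_b}, \eqref{eq:Delta_T_g_a}--\eqref{eq:Delta_T_g_b}), which is a valid and slightly leaner route than the paper's double-FTC expansion followed by a H\"older estimate against $|\Delta[\k_i]|_{L^p}$; both land on the same bound.
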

\begin{proof}
We prove these one by one using repeatedly the fundamental theorem of calculus, \eqref{eq:Delta_T_g_a}--\eqref{eq:Delta_T_g_c}, and \eqref{eq:Delta_points_a}--\eqref{eq:Delta_points_c}.

\noindent{\textbf{Estimate for \eqref{eq:building_blocks_difference_a}.}}

We first rewrite the left-hand side by the telescoping formula \eqref{aux22122a}
\begin{align*}
   \Delta \left[ \big( \g_{i}(\xi ) -\g_i( \eta ) \big) \cdot \T_i( \xi)\right]  &= \int_{\eta}^{\xi}    \Delta \left[   \T_i( \tau ) g_i(\tau )  \cdot \T_i( \xi)\right] \, d\tau\\
   & = \int_{\eta}^{\xi}   \left( \Delta \left[ \big( \T_i( \tau) -\T_i( \xi)\big) g_i(\tau ) \cdot \T_i( \xi)\right]  +  \Delta [g_i](\tau ) \right) \, d\tau\\
   & = \Delta [g_i](\xi) (\xi -\eta )+\int_{\eta}^{\xi}   \left(  \Delta \left[ \big( \T_i( \tau )-\T_i( \xi)\big) g_i(\tau ) \cdot \T_i( \xi)\right] \right. \\
   &   \qquad\qquad\qquad\qquad\qquad\qquad+ \left. \Delta \left[ g_i(\tau )  -g_i(\xi)\right] \right) \, d\tau.
\end{align*}
By \eqref{eq:Delta_T_g_a}, \eqref{eq:Delta_T_g_b}, and \eqref{eq:Delta_points_b},
$$
|\Delta \left[ \big( \T_i( \tau )-\T_i( \xi)\big) g_i(\tau ) \cdot \T_i( \xi)\right]| = O(C_M \delta |\xi -\tau|^\alpha),
$$
so this and \eqref{eq:Delta_points_c} prove \eqref{eq:building_blocks_difference_a}:
$$
\Delta \left[ \big( \g_{i}(\xi ) -\g_i( \eta ) \big) \cdot \T_i( \xi)\right]  = \Delta [g_i](\xi)  (\xi -\eta ) +O(C_M \delta |\xi-\eta|^{1+\alpha}).
$$

\noindent{\textbf{Estimate for \eqref{eq:building_blocks_difference_b}.}}

We use the fundamental theorem of calculus twice to obtain
\begin{align}\label{eq:building_blocks_difference_aux1}
\Delta \left[ \big( \g_{i}(\xi ) -\g_i( \eta ) \big) \cdot \N_i( \xi)\right]  &=  \Delta \left[ \int_{\eta}^{\xi}    \int^{\xi}_{\tau}    \k_i( \tau' )g_i( \tau ') \N_i( \tau' ) \cdot \N_i( \xi) g_i(\tau )  \, d\tau' \, d\tau \right].
\end{align}
Distributing $\Delta$ in \eqref{eq:building_blocks_difference_aux1} gives
\begin{align*}
\Delta \left[ \big( \g_{i}(\xi ) -\g_i( \eta ) \big) \cdot \N_i( \xi)\right]  &=   \int_{\eta}^{\xi}    \int^{\xi}_{\tau}  \Delta \left[  \k_i( \tau' )\right] g_1( \tau ') \N_1( \tau' ) \cdot \N_1( \xi) g_1(\tau )  \, d\tau' \, d\tau \\
& \qquad +   \int_{\eta}^{\xi}    \int^{\xi}_{\tau}    \k_2( \tau' ) \Delta \left[g_i( \tau ') \N_i( \tau' ) \cdot \N_i( \xi) g_i(\tau )  \right]\, d\tau' \, d\tau .
\end{align*}
We can estimate the first term by its absolute value and the second term by using the pointwise bound $| \Delta[    g_i( \tau ') \N_i( \tau' ) \cdot \N_i( \xi) g_i(\tau )  ] | \leq C_M \delta $ thanks to \eqref{eq:Delta_T_g_a}--\eqref{eq:Delta_T_g_c}. These considerations along with the assumption $|\k|_{L^p(\TT)} \leq M$ and the H\"older inequality imply
\begin{align*}
\Delta \left[ \big( \g_{i}(\xi ) -\g_i( \eta ) \big) \cdot \N_i( \xi)\right]  &=   O(C_M|\Delta[\k_i]|_{L^p(\TT)} |\xi -\eta|^{1+\alpha } )  \\
& \qquad\qquad +  O\left(C_M  \delta \int_{\eta}^{\xi}    \int^{\xi}_{\tau}   | \k_2( \tau' )|   \, d\tau' \, d\tau \right) \\
& =   O(C_M \delta |\xi -\eta|^{1+\alpha }).
\end{align*}

\noindent{\textbf{Estimate for   \eqref{eq:building_blocks_difference_g}.}}

We start with
\begin{align}\label{eq:Delta_gamma-2_1}
\Delta\left[\frac{ 1    }{ |\gamma_{i}(\xi)- \gamma_{i}(\eta)|^{2  } }     \right] &= \frac{ \Delta[\g_i(\xi) - \g_i(\eta)] \cdot \left(\g_1(\xi) -\g_1(\eta)+ \g_2(\xi) -\g_2(\eta)  \right) }{|\g_1 (\xi) - \g_1(\eta)|^2|\g_2 (\xi) - \g_2(\eta)|^2}.
\end{align}
Observe that
\begin{equation}\label{eq:Delta_gamma-2_2}
\Delta \left[  \gamma_i(\xi) - \g_i(\eta)  \right]    = \Delta \left[ \dot{\g}_i(\xi)   \right] (\xi -\eta) + O(C_M \delta |\xi - \eta|^{1+\alpha}).
\end{equation}
Also, as in \eqref{aux22122}, for any $\xi,\eta \in \TT$ with $\xi\neq \eta$, we have
\begin{equation}\label{eq:Delta_gamma-2_3}
 \frac{   |\xi  -   \eta|^{2 } }{  |   \gamma_i(\xi) - \gamma_i(\eta)   |^{2 } }   = \frac{   1 }{  g_i (\xi)^{2 } } + O(  C_M |\xi -\eta|^{\alpha}  ).
\end{equation}
So by \eqref{eq:Delta_gamma-2_1}, \eqref{eq:Delta_gamma-2_2}, and \eqref{eq:Delta_gamma-2_3}, we have for some bounded function $|C_\delta(\xi)| \leq C_M \delta $
that
\begin{align*}
\Delta\left[\frac{ 1    }{ |\gamma_{i}(\xi)- \gamma_{i}(\eta)|^{2  } }     \right] &=    \frac{ \left[  \Delta \left[ \dot{\g}_i(\xi)   \right] (\xi -\eta) + O(C_M \delta |\xi - \eta|^{1+\alpha})  \right]     }{  |g_{1}(\xi)|^2|g_{2}(\xi)|^2 | \xi -  \eta |^{4  } }   \\
&\qquad \qquad \times\left[ (\dot{\g_1}(\xi) +  \dot{\g_2}(\xi) )(\xi - \eta) + O(C_M |\xi -\eta|^{1+\alpha}) \right] \\
&\qquad\qquad   + O(C_M \delta|\xi -\eta|^{-2+\alpha })  \\
     & = \frac{ C_\delta(
     \xi )
     }{|\xi-\eta|^2} +O(C_M \delta |\xi -\eta|^{\alpha-2}).
\end{align*}

\noindent{\textbf{Estimate for   \eqref{eq:building_blocks_difference_Ma}.}}

By the telescoping formula \eqref{aux22122a},
\begin{align*}
  \Delta\left[    \left( \T_i(\xi) -\T_i(\eta)  \right)  \cdot\T_i(\xi) \right] &= -\int_{\eta}^{\xi}    \Delta \left[   \N_i( \tau )   \cdot \T_i( \xi) \k_i(\tau ) g_i(\tau ) \right] \, d\tau \\
&= -\int_{\eta}^{\xi}    \Delta \left[   \N_i( \tau )   \cdot \T_i( \xi) \right] \k_1(\tau ) g_1(\tau )  \, d\tau \\
&\qquad- \int_{\eta}^{\xi}    \N_2( \tau )   \cdot \T_2( \xi)  \Delta \left[  \k_i(\tau )\right] g_1(\tau )  \, d\tau \\
&\qquad\qquad- \int_{\eta}^{\xi}    \N_2( \tau )   \cdot \T_2( \xi)    \k_2(\tau ) \Delta \left[g_i(\tau ) \right] \, d\tau .
\end{align*}

We use \eqref{eq:building_blocks_c}, the difference bounds \eqref{eq:Delta_points_a}--\eqref{eq:Delta_points_c} to obtain that
\begin{align*}
  \Delta\left[    \left( \T_i(\xi) -\T_i(\eta)  \right)  \cdot\T_i(\xi) \right]
&= O\left( C_M \delta |\xi-\eta|^\alpha  \int_{\eta}^{\xi}  |\k_1(\tau ) |    \, d\tau \right)\\
&\qquad + O\left( C_M   |\xi-\eta|^\alpha  \int_{\eta}^{\xi}    \big|  \Delta \left[  \k_i(\tau )\right]    \big| \, d\tau\right) \\
&\qquad\qquad+ O\left(  C_M \delta |\xi-\eta|^\alpha  \int_{\eta}^{\xi}       \big|\k_2(\tau ) \big|   \, d\tau \right) .
\end{align*}
So by the definition of maximal function we have
\begin{align*}
  \Delta\left[    \left( \T_i(\xi) -\T_i(\eta)  \right)  \cdot\T_i(\xi) \right]
&= O ( C_M \delta |\xi-\eta|^{1+\alpha}   \max_i \mathcal{M} \k_i (\zeta) ) \\
&\qquad  + O ( C_M   |\xi-\eta|^{1+\alpha}    \mathcal{M} \Delta[\k_i] (\zeta) ) .
\end{align*}

\noindent{\textbf{Estimate for   \eqref{eq:building_blocks_difference_Mb}}}

Since by the fundamental theorem of calculus,
\begin{align*}
\Delta & \left[   \left(  \g_i (\xi) - \g_i(\eta) \right) \left( \T_i(\xi) -\T_i(\eta)  \right)    \right] \\
&= - \int_{\eta}^{\xi} \int_{\eta}^{\xi} \Delta \left[ g_i(\tau ) \T_i (\tau ) \cdot \N_i(\tau' )  \k_i (\tau' )  g_i(\tau' ) \right] \, d\tau '  d\tau,
\end{align*}
the bound follows from the same argument for \eqref{eq:building_blocks_difference_Ma}. The extra power of $|\xi -\eta|$ is given by the extra layer of integration.

\end{proof}

Thanks to \eqref{eq:building_blocks_difference_g} and \eqref{eq:building_blocks_difference_a}, we have the following estimate for the differences of factors with linear dispersion in $\p_s^2 v$.

\begin{corollary}\label{corollary:delta_linear}
Let $\g_i\in B_p^M$, $i= 1,2$. There exists a bounded function $|C_\delta (\xi)| \leq C_M \delta  $ such that for all $\xi , \eta \in \TT$, we have
\begin{subequations}
\begin{align}
    \Delta\left[\frac{ (\g_i(\xi) - \g_i(\eta)) \cdot \T_i(\xi )    }{ |\gamma_{i}(\xi)- \gamma_{i}(\eta)|^{2  } }     \right] & =  C_\delta (\xi) \frac{\xi -\eta}{|\xi -\eta|^2} + O(C_M \delta |\xi -\eta|^{1-\alpha }).
\end{align}
\end{subequations}

\end{corollary}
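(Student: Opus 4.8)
The plan is to expand the difference with the product rule \eqref{aux22122a}, splitting the numerator $(\g_i(\xi)-\g_i(\eta))\cdot\T_i(\xi)$ from the reciprocal $|\g_i(\xi)-\g_i(\eta)|^{-2}$, and then to substitute the building blocks \eqref{eq:building_blocks_difference_a}, \eqref{eq:building_blocks_difference_g} together with the Taylor expansion \eqref{aux22122}. Concretely, \eqref{aux22122a} gives
\begin{align*}
\Delta\!\left[\frac{(\g_i(\xi)-\g_i(\eta))\cdot\T_i(\xi)}{|\g_i(\xi)-\g_i(\eta)|^2}\right]
&=\Delta\big[(\g_i(\xi)-\g_i(\eta))\cdot\T_i(\xi)\big]\,\frac{1}{|\g_1(\xi)-\g_1(\eta)|^2}\\
&\quad+\big((\g_2(\xi)-\g_2(\eta))\cdot\T_2(\xi)\big)\,\Delta\!\left[\frac{1}{|\g_i(\xi)-\g_i(\eta)|^2}\right]=:\mathrm{I}+\mathrm{II},
\end{align*}
a pointwise identity valid for $\xi\neq\eta$, so no principal value is involved at this stage.

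For $\mathrm{I}$ I would insert \eqref{eq:building_blocks_difference_a} for the first factor, $\Delta[g_i](\xi)(\xi-\eta)+O(C_M\delta|\xi-\eta|^{1+\alpha})$, and \eqref{aux22122} for the second, $g_1(\xi)^{-2}|\xi-\eta|^{-2}+O(C_M|\xi-\eta|^{\alpha-2})$, and multiply the two expansions out. The product of the leading terms is $\frac{\Delta[g_i](\xi)}{g_1(\xi)^2}\frac{\xi-\eta}{|\xi-\eta|^2}$; since $\g_1\in B^M_p$ gives $g_1^{-2}\leq M^2$ and \eqref{eq:Delta_T_g_b} gives $|\Delta[g_i](\xi)|\leq C_M\delta$, this is exactly of the form $C_\delta^{(1)}(\xi)\frac{\xi-\eta}{|\xi-\eta|^2}$ with $|C_\delta^{(1)}(\xi)|\leq C_M\delta$; the three remaining products are each $O(C_M\delta|\xi-\eta|^{\alpha-1})$ (the one carrying both remainders is $O(|\xi-\eta|^{2\alpha-1})$ and is absorbed since $|\xi-\eta|\leq 2\pi$ on $\TT$). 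For $\mathrm{II}$ I would bound the first factor by $g_2(\xi)(\xi-\eta)+O(C_M|\xi-\eta|^{1+\alpha})$ (this is \eqref{eq:building_blocks_linear} combined with the H\"older continuity of $g_2$, or just the fundamental theorem of calculus) and substitute \eqref{eq:building_blocks_difference_g} for the second factor; the leading product $g_2(\xi)C_\delta(\xi)\frac{\xi-\eta}{|\xi-\eta|^2}$ is again of the form $C_\delta^{(2)}(\xi)\frac{\xi-\eta}{|\xi-\eta|^2}$ with $|C_\delta^{(2)}(\xi)|\leq C_M\delta$ (using $g_2\leq M$), and the other products are $O(C_M\delta|\xi-\eta|^{\alpha-1})$. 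Setting $C_\delta:=C_\delta^{(1)}+C_\delta^{(2)}$ and collecting the errors then yields the asserted decomposition, with remainder $O(C_M\delta|\xi-\eta|^{\alpha-1})$, which is integrable in $\eta$ near $\eta=\xi$ precisely because $1<p<\infty$ — exactly what is needed when this corollary is later inserted into the Lipschitz estimate for $\p_s^2 v$.

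The computation is routine big-$O$ bookkeeping via the telescoping identity \eqref{aux22122a}; the single point that must be handled with care — and the reason the statement isolates a separate term rather than merely writing an error bound — is structural: after the telescoping, the entire non-integrable part of the difference collapses into the one term $C_\delta(\xi)\frac{\xi-\eta}{|\xi-\eta|^2}$, whose coefficient depends on $\xi$ only (not on $\eta$) and is $O(\delta)$. This is the odd, dispersive piece that in the subsequent contraction estimates for $\p_s^2 v$ will either vanish under the principal value or be controlled through the $L^p$-boundedness of the Hilbert transform, so it is essential that it emerge with an explicit bounded-in-$\xi$ coefficient rather than being hidden inside the remainder. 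I do not anticipate any genuine obstacle beyond keeping the powers of $|\xi-\eta|$ straight.
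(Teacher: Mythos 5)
Your proof is correct and follows the exact route the paper intends: the paper states only that the corollary is ``thanks to'' \eqref{eq:building_blocks_difference_g} and \eqref{eq:building_blocks_difference_a}, and you carry out precisely that telescoping, together with \eqref{aux22122} and \eqref{eq:building_blocks_linear}, to produce the decomposition. The one discrepancy worth flagging is that your bookkeeping yields the error $O(C_M\delta|\xi-\eta|^{\alpha-1})$ while the paper's display writes $O(C_M\delta|\xi-\eta|^{1-\alpha})$; comparing with how the corollary is actually invoked in the estimate of $H_{12}$ in Proposition~\ref{prop:Lip_ds2v} (where the error is written $|\xi-\eta|^{\alpha-1}$), your exponent is the correct one and the sign in the corollary statement is a typo.
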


With all the preparations, we first show the Lipschitz continuity of $\g \to v(\g)$ in $B_p^M$ with a norm depending on $M$. As before, we will frequently use the telescoping formula
$$
\Delta [f_i g_i] = \Delta [f_i] g_1 +f_2 \Delta [ g_i].
$$

\begin{proposition}\label{prop:Lip_v}
Let $\g_i \in B_p^M$, $i= 1,2$. Denote the distance between them in $X_p$ by $\delta =  |\Delta[ \g_i ]  |_{W^{2,p}(\TT)}$. The map $\g\mapsto v(\g)$ satisfies
\begin{equation}
\left|    v(\g_1 )  -  v (\g_2) \right|_{ L^{\infty}(\TT)  } \leq C_M \delta.
\end{equation}
\end{proposition}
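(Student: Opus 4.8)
The plan is to expand both velocities via the Biot--Savart law \eqref{eq:velocity} rewritten in geometric variables,
\[
v(\g_i(\xi)) = \int_{\TT} \T_i(\eta)\, g_i(\eta)\, \ln|\g_i(\xi) - \g_i(\eta)|\, d\eta ,
\]
and to estimate $v(\g_1)(\xi) - v(\g_2)(\xi)$ pointwise in $\xi$. Applying $\Delta[\cdot]$ under the integral and using the telescoping identity \eqref{aux22122a}, I would split the integrand as
\[
\Delta\big[\T_i(\eta) g_i(\eta) \ln|\g_i(\xi)-\g_i(\eta)|\big] = \Delta[\T_i(\eta) g_i(\eta)]\, \ln|\g_1(\xi)-\g_1(\eta)| + \T_2(\eta) g_2(\eta)\, \Delta\big[\ln|\g_i(\xi)-\g_i(\eta)|\big],
\]
and bound the two resulting integrals over $\eta \in \TT$ separately.

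For the first integral, the pointwise bound $|\Delta[\T_i g_i]| \le C_M \delta$ follows from \eqref{eq:Delta_T_g_a}--\eqref{eq:Delta_T_g_b}, telescoping \eqref{aux22122a}, and $|g_i|_* \le M$. At the same time, the arc-chord bound $\Gamma \le M$ and $|g_1|_{L^\infty}\le M$ give the two-sided comparison $|\xi-\eta|/M \le |\g_1(\xi)-\g_1(\eta)| \le M|\xi-\eta|$ on $\TT$, hence $\big|\ln|\g_1(\xi)-\g_1(\eta)|\big| \le C_M + \big|\ln|\xi-\eta|\big|$, which is integrable in $\eta$ uniformly in $\xi$. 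Thus the first contribution is $O(C_M\delta)$.

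For the second integral the only delicate point is the logarithmic kernel. Setting $a = |\g_1(\xi)-\g_1(\eta)|$ and $b = |\g_2(\xi)-\g_2(\eta)|$ and using the elementary inequality $|\ln a - \ln b| \le |a-b|/\min(a,b)$, together with $|a-b| \le \big|\Delta[\g_i(\xi)-\g_i(\eta)]\big| \le C_M\delta\,|\xi-\eta|$ from \eqref{eq:Delta_points_a} and the lower bound $\min(a,b) \ge |\xi-\eta|/M$ from the arc-chord, one obtains the pointwise bound $\big|\Delta[\ln|\g_i(\xi)-\g_i(\eta)|]\big| \le C_M\delta$. Since $|\T_2 g_2| \le M$, integrating over $\TT$ gives $O(C_M\delta)$ as well. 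Adding the two pieces and taking the supremum over $\xi$ yields $|v(\g_1)-v(\g_2)|_{L^\infty(\TT)} \le C_M\delta$.

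I do not expect a genuine obstacle here: this is the mildest of the contraction estimates, and, unlike the forthcoming estimates for $\p_s v$ and $\p_s^2 v$, it involves no principal-value cancellations. The only mild subtlety is keeping all constants dependent only on $M$ and $p$ while handling the logarithmic singularity, which is exactly what the uniform comparability $|\g_i(\xi)-\g_i(\eta)| \sim_M |\xi-\eta|$ provides.
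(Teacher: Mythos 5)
Your proposal matches the paper's proof essentially verbatim: same telescoping decomposition of $\Delta\big[\T_i(\eta) g_i(\eta) \ln|\g_i(\xi)-\g_i(\eta)|\big]$, same arc-chord comparability $|\g_i(\xi)-\g_i(\eta)| \sim_M |\xi-\eta|$ to control the integrable logarithmic kernel in the first piece, and the same pointwise bound of the logarithmic difference for the second piece. The only cosmetic difference is that the paper proves $\big|\Delta[\ln|\g_i(\xi)-\g_i(\eta)|]\big| \le C_M\delta$ by assuming WLOG that the ratio exceeds $1$ and applying $\ln(1+x)\le x$, whereas you invoke $|\ln a - \ln b| \le |a-b|/\min(a,b)$; these are the same inequality, so the argument is identical.
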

\begin{proof}

\begin{align*}
\Delta  \left[ v(\g_i )\right]
&= \int_{\TT} \Delta  \left[ \T_i(\eta) g_i(\eta)\right] \ln|\g_1(\xi) - \g_1(\eta)|  \, d\eta \\
&\qquad + \int_{\TT}  \T_2(\eta) g_2(\eta) \Delta  \left[\ln|\g_i(\xi) - \g_i(\eta)| \right] \, d\eta .
\end{align*}
For the first term, since $\g_i \in B_p^M$, we have $C_M^{-1}|\xi -\eta | \leq |\g_i(\xi) - \g_i(\eta)| \leq C_M |\xi -\eta | $. This implies the bound  $\big| \ln|\g_i(\xi) - \g_i(\eta)| \big|  \leq C_M +  \left| \ln   |\xi -\eta |    \right|$. It then follows from \eqref{eq:Delta_T_g_a} and \eqref{eq:Delta_T_g_b} that
\begin{equation}\label{eq:lip_v_aux1}
\left| \int_{\TT} \Delta  \left[ \T_i(\eta) g_i(\eta)\right] \ln|\g_1(\xi) - \g_1(\eta)|  \, d\eta \right| \leq C_M \delta \int_{\TT} \big| \ln|\g_1(\xi) - \g_1(\eta)| \big| \, d\eta \leq C_M \delta.
\end{equation}

For the second term, without loss of generality we assume $\left| \frac{\g_1(\xi) - \g_1(\eta)}{\g_2(\xi) - \g_2(\eta)}\right|\geq 1 $. We start with an elementary inequality $\ln(1+x ) \leq x$ for $x>0$:
\begin{align*}
\left|  \Delta  \left[\ln|\g_i(\xi) - \g_i(\eta)| \right]   \right|&=       \ln  \left| \frac{\g_1(\xi) - \g_1(\eta)}{\g_2(\xi) - \g_2(\eta)}\right|    \\
& \leq \ln  \left(1 + \left|  \frac{\Delta\left[\g_i(\xi) - \g_i(\eta)\right]}{\g_2(\xi) - \g_2(\eta)} \right|\right) \leq \left|  \frac{\Delta\left[\g_i(\xi) - \g_i(\eta)\right]}{\g_2(\xi) - \g_2(\eta)} \right| .
\end{align*}
Since the definition of $\delta$ and assumptions $\g_i \in B^M_p$ imply that
$$
\left|  \Delta  \left[\ln|\g_i(\xi) - \g_i(\eta)| \right]   \right| \leq C_M \delta,
$$
we have the bound for the second part:
\begin{equation}\label{eq:lip_v_aux2}
\left| \int_{\TT}  \T_2(\eta) g_2(\eta) \Delta  \left[\ln|\g_i(\xi) - \g_i(\eta)| \right] \, d\eta \right| \leq C_M   \delta.
\end{equation}
Combining the two parts \eqref{eq:lip_v_aux1} and \eqref{eq:lip_v_aux2}, we conclude that
\begin{align*}
\left|   \Delta  \left[ v(\g_i )\right] \right|_{ L^{\infty}(\TT)  } \leq C_M \delta .
\end{align*}

\end{proof}

Next, we estimate the second-order derivative for the map $\g \mapsto v(\g)$ as the bound of the first-order derivative can be recovered by interpolation.
\begin{proposition}\label{prop:Lip_ds2v}
Let $\g_i \in B_p^M$, $i= 1,2$. Denote the distance between them in $X_p$ by $\delta =    |\Delta[\g_i]  |_{W^{2,p}(\TT)}$. The map $\g \mapsto v(\g)$ satisfies
\begin{equation}
\left| \p_s^2 v(\g_1) - \p_s^2 v(\g_2) \right|_{ L^{p} (\TT) } \leq C_M \delta.
\end{equation}
\end{proposition}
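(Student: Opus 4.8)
The starting point is the explicit formula \eqref{eq:d2_v_xi} from Proposition~\ref{prop:derivatives_v}: writing $\p_s^2 v(\g_i) = \sum_{1\le j\le 4} K_j^{(i)}$ with each $K_j^{(i)}$ built from $\g_i,\T_i,\N_i,\k_i,g_i$ exactly as the $K_1,\dots,K_4$ in Step~3 of that proof, it suffices to show $\big|\Delta[K_j^{(i)}]\big|_{L^p(\TT)}\le C_M\delta$ for each $j$ separately. In every $K_j$ I would distribute the difference operator $\Delta$ over all factors of the integrand — the tangent $\T_i(\eta)$, the metric $g_i(\eta)$, the geometric numerators, the prefactor $\k_i(\xi)$ in $K_3$, and the power $|\g_i(\xi)-\g_i(\eta)|^{-2}$ (resp.\ $|\g_i(\xi)-\g_i(\eta)|^{-4}$ in $K_4$) — by repeated use of the telescoping identity \eqref{aux22122a}. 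This turns $\Delta[K_j^{(i)}]$ into a finite sum of terms in each of which one factor carries $\Delta$ and the rest are taken with respect to $\g_1$ or $\g_2$; the term with $\Delta$ is then bounded using the ``difference building blocks'' of Lemma~\ref{lemma:building_blocks_difference}, Corollary~\ref{corollary:delta_linear}, and \eqref{eq:Delta_T_g_a}--\eqref{eq:Delta_points_c}, while the remaining factors are bounded by the ``plain'' building blocks of Lemma~\ref{lemma:building_blocks} and Corollary~\ref{corollary:building_blocks}. Structurally this is the same computation as Steps~3--4 of Proposition~\ref{prop:derivatives_v}, with a curvature replaced by a difference of curvatures wherever $\Delta$ lands on a $\k_i$, and with the $\ep\to0$ truncations matching for the two curves since both use the common Lagrangian cutoff $\{|\xi-\eta|\ge\ep\}$.

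For $K_2$ the new ingredient is $\Delta\big[(\T_i(\xi)-\T_i(\eta))\cdot\T_i(\xi)\big]$, controlled by \eqref{eq:building_blocks_difference_Ma} with $\zeta=\xi$; for $K_3$ it is $\Delta\big[(\g_i(\xi)-\g_i(\eta))\cdot\N_i(\xi)\big]$, controlled by \eqref{eq:building_blocks_difference_b}, together with the fact (proved in Step~3) that the inner integral of $K_3$ is bounded in $L^\infty$ by $C_M$, so that the piece where $\Delta$ hits the prefactor $\k_i(\xi)$ costs only $|\Delta[\k_i]|_{L^p}\le C_M\delta$; for $K_4$ one uses \eqref{eq:building_blocks_difference_a}, \eqref{eq:building_blocks_difference_Mb}, and the fourth-power analogue of \eqref{eq:building_blocks_difference_g} obtained by squaring via $\Delta[a_i^2]=\Delta[a_i](a_1+a_2)$ with $a_i=|\g_i(\xi)-\g_i(\eta)|^{-2}$. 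In every case, after taking absolute values and inserting the plain bounds \eqref{eq:building_blocks_d}, \eqref{eq:building_blocks_M_d}, \eqref{eq:building_blocks_M_e}, \eqref{eq:building_blocks_linear} for the non-$\Delta$ factors and \eqref{eq:building_blocks_f} for the denominators, the integrand is dominated by
\[
C_M|\xi-\eta|^{-1+\alpha}\Big(\delta\,\max_i\mathcal{M}\k_i(\xi)+\mathcal{M}\Delta[\k_i](\xi)+|C_\delta(\xi)|\,\mathcal{M}\k_1(\xi)\Big)+C_M\delta\,|\xi-\eta|^{-1+\alpha},
\]
where $|C_\delta|\le C_M\delta$. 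Integrating in $\eta$ (Young's inequality, since $|\xi-\eta|^{-1+\alpha}\in L^1(\TT)$), then taking the $L^p(\TT)$ norm in $\xi$ and using the $L^p$-boundedness of the maximal function along with $|\k_i|_{L^p}\le M$ and $|\Delta[\k_i]|_{L^p}\le C_M\delta$, yields $|\Delta[K_j^{(i)}]|_{L^p}\le C_M\delta$ for $j=2,3,4$.

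The remaining, more delicate term is $K_1$, the principal-value integral carrying the linear/dispersive part of the kernel. Writing $m_i:=\k_i\N_i g_i$ and $\mathcal{K}_i(\xi,\eta):=\frac{(\g_i(\xi)-\g_i(\eta))\cdot\T_i(\xi)}{|\g_i(\xi)-\g_i(\eta)|^{2}}$, we have $\Delta[K_1^{(i)}]=-\mathrm{P.V.}\!\int_\TT \Delta[m_i]\,\mathcal{K}_1\,d\eta-\mathrm{P.V.}\!\int_\TT m_2\,\Delta[\mathcal{K}_i]\,d\eta$. In the first integral, Corollary~\ref{corollary:building_blocks} replaces $\mathcal{K}_1$ by $\frac{1}{g_1(\xi)}\frac{\xi-\eta}{|\xi-\eta|^{2}}$ up to an $O(C_M|\xi-\eta|^{\alpha-1})$ error, so the singular part is $\tfrac{1}{g_1(\xi)}$ times the (periodic) Hilbert transform of $\Delta[m_i]$, and since $|\Delta[m_i]|_{L^p}\le |\Delta[\k_i]|_{L^p}|\N_1 g_1|_{L^\infty}+|\k_2|_{L^p}|\Delta[\N_i g_i]|_{L^\infty}\le C_M\delta$, the $L^p$-boundedness of $\mathcal{H}$ gives a bound $C_M\delta$; the error term is handled by Young. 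In the second integral, Corollary~\ref{corollary:delta_linear} gives $\Delta[\mathcal{K}_i]=C_\delta(\xi)\frac{\xi-\eta}{|\xi-\eta|^{2}}+O(C_M\delta|\xi-\eta|^{1-\alpha})$ with $|C_\delta|\le C_M\delta$, so the singular part is $C_\delta(\xi)$ times $\mathcal{H}m_2$, whose $L^p$ norm is at most $|C_\delta|_{L^\infty}\,C\,|m_2|_{L^p}\le C_M\delta$, while the $O(C_M\delta|\xi-\eta|^{1-\alpha})$ remainder is again controlled by Young's inequality. Summing the four bounds gives $|\p_s^2 v(\g_1)-\p_s^2 v(\g_2)|_{L^p(\TT)}\le C_M\delta$.

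I expect the main obstacle to be precisely this $K_1$ analysis (and the analogous leading contributions hidden inside $K_4$): one must organize \emph{every} singular contribution either as $\mathcal{H}$ applied to an $L^p$ function of norm $O(\delta)$, or as $O(\delta)$ times $\mathcal{H}$ of a fixed $L^p$ function, or as convolution with an $L^1$ kernel, and one must make sure that the non-singular pieces arising when $\Delta$ falls on $\T_i(\eta)$, $g_i(\eta)$, or $|\g_i(\xi)-\g_i(\eta)|^{-2}$ do not secretly reintroduce a genuine (non-integrable, non-Hilbert) singularity. This is exactly where the exact cancellations encoded in Corollaries~\ref{corollary:building_blocks} and~\ref{corollary:delta_linear} are essential, and keeping track of which factor carries $\Delta$ while matching up leading terms is the bulk of the (routine but lengthy) bookkeeping.
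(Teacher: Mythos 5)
Your proposal follows essentially the same route as the paper: the same decomposition into the four terms $K_1,\dots,K_4$ of \eqref{eq:d2_v_xi}, the same telescoping strategy placing $\Delta$ on one factor at a time, the same reliance on Corollary~\ref{corollary:building_blocks}, Corollary~\ref{corollary:delta_linear}, Lemma~\ref{lemma:building_blocks_difference}, the $L^p$-boundedness of the Hilbert transform and the maximal function, and Young's inequality; your treatment of the principal-value term $K_1$ (splitting into the piece where $\Delta$ falls on $m_i=\k_i\N_i g_i$ versus on the kernel $\mathcal{K}_i$) is exactly the paper's $H_{11}+H_{12}$. The only cosmetic difference is in $K_4$, where the paper regroups the kernel as a product of two ratios each with $|\gamma_i(\xi)-\gamma_i(\eta)|^{-2}$ rather than telescoping on $|\gamma_i(\xi)-\gamma_i(\eta)|^{-4}$ directly as you do, but the resulting estimates are identical.
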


\begin{proof}
We apply the difference $\Delta$ to each term in $\p_s^2 v$ \eqref{eq:d2_v_xi} and obtain the decomposition:
\begin{align*}
\Delta [ \p_s^2 v (\g_i)] :=  \sum_{1\leq n \leq 4}    H_{n}(\xi )  ,
\end{align*}
with
\begin{align*}
  H_{ 1}(\xi )  & := -   P.V. \int_{\TT }   \Delta\left[  \k_i(\eta)\N_i (\eta)    \frac{  (\gamma_i(\xi)- \gamma_i(\eta)) \cdot \T_i (\xi)}{  |\gamma_i (s)- \gamma_i (\eta)|^{2   }} \, g_i (\eta) \right] d\eta \\
 H_{ 2}(\xi )   & :=\int_{\TT } \Delta \left[ \T_i (\eta)    \frac{ \big[ (\T_i (\xi ) - \T_i (\eta))\cdot \T_i ( \xi)  \big] }{ |\gamma_i (\xi)- \gamma_i (\eta)|^{2  } }    \, g_i (\eta) \right] d\eta\\
 H_{ 3}(\xi )    & := -\Delta \left[\k_i(\xi)\int_{\TT }    \T_i (\eta)    \frac{ \big[ (\g_i (\xi ) - \g_i (\eta))\cdot \N_i ( \xi)  \big] }{ |\gamma_i (\xi)- \gamma_i (\eta)|^{2  } }    \, g_i (\eta) \right] d\eta
\end{align*}
and
\begin{align*}
 H_{ 4} (\xi )   :=& -  2  \int_{\TT }   \Delta \Big[  \T_i (\eta ) \big( (  \gamma_i  (\xi)- \gamma_i (\eta)  ) \cdot \T_i (\xi) \big) \\ & \qquad \frac{     \big( (  \gamma_i (\xi)- \gamma_i  (\eta)  ) \cdot (\T_i (\xi) -\T_i (\eta)) \big)  }{|\gamma_i (\xi)- \gamma_i (\eta)|^{4  }} \, g_i (\eta)  \Big]d\eta .
\end{align*}

\noindent
\textbf{Estimates of $  H_{ 1}$:}

We first telescope and obtain a further decomposition
\begin{align*}
H_{ 1}(\xi)& =  P.V.  \int_{\TT }   \Delta\left[  \k_i(\eta)  g_i (\eta)\N_i (\eta)\right]    \frac{  (\gamma_1(\xi)- \gamma_1(\eta)) \cdot \T_1 (\xi)}{  |\gamma_1 (s)- \gamma_1 (\eta)|^{2   }} \,  d\eta\\
& \qquad +  P.V. \int_{\TT }    \k_2(\eta)  g_2 (\eta) \N_2 (\eta) \Delta\left[ \frac{  (\gamma_i(\xi)- \gamma_i(\eta)) \cdot \T_i (\xi)}{  |\gamma_i (s)- \gamma_i (\eta)|^{2   }} \right]\,  d\eta\\
&:= H_{11}(\xi)+ H_{12}(\xi).
\end{align*}
By Corollary \ref{corollary:building_blocks}, the first term is equal to
\begin{align*}
 H_{11} (\xi ) & =  P.V.  \int_{\TT} \Delta\left[  \k_i(\eta)  g_i (\eta) \N_i(\eta )\right] \left( \frac{\xi-\eta}{g_1 (\eta )|\xi-  \eta|^2} + O(C_M|\xi-\eta|^{\alpha-1}) \right)\,d\eta.
\end{align*}
Thus by the $L^p$ boundedness of the Hilbert transform and Young's inequality, we get
\begin{align*}
|H_{11}|_{L^p (\TT)}
& \leq C_M \left| \Delta\left[  \k_i    \N_i g_i \right]\right|_{L^p(\TT) }   \leq C_M \delta,
\end{align*}
where in the last step we have used bounds \eqref{eq:Delta_T_g_a}--\eqref{eq:Delta_T_g_c}.

For the second term $H_{12}$, we apply Corollary \ref{corollary:delta_linear} to obtain that
\begin{align*}
|H_{12}|_{L^p (\TT) } & \leq  \left(\int_{\TT } \left| |C_\delta (\xi) |  P.V.  \int_{\TT }    \k_2(\eta)  \N_2 (\eta)   g_2 (\eta) \frac{\xi -\eta}{|\xi -\eta|^{2}}\,  d\eta \right|^p \, d\xi \right)^\frac{1}{p}\\
&\qquad +C_M \delta \left(\int_{\TT } \left|  \int_{\TT }      \k_2(\eta)   |\xi -\eta|^{\alpha-1} \,  d\eta \right|^p \, d\xi \right)^\frac{1}{p} .
\end{align*}
Then by the $L^p$ boundedness of the Hilbert transform and $|C_\delta (\xi)| \leq C_M \delta $ we get the same bound as $H_{11}$ for  $H_{12}$.

\noindent
\textbf{Estimates of $  H_{2}$:}

As before, we further split $H_2$,
\begin{align*}
H_{2}(\xi ) &= \int_{\TT } \Delta \left[ \T_i (\eta)  g_i (\eta) \right] \frac{ \big[ (\T_1 (\xi ) - \T_1 (\eta))\cdot \T_1 ( \xi)  \big] }{ |\gamma_1 (\xi)- \gamma_1 (\eta)|^{2  } }    \,  d\eta\\
&\qquad  +\int_{\TT }  \T_2 (\eta)     g_2 (\eta) \Delta \left[\frac{ \big[ (\T_i (\xi ) - \T_i (\eta))\cdot \T_i ( \xi)  \big] }{ |\gamma_i (\xi)- \gamma_i (\eta)|^{2  } }    \, \right] d\eta\\
&:= H_{21} (\xi ) +H_{22} (\xi ) .
\end{align*}
We first look at the term $ H_{21} $. By \eqref{eq:Delta_T_g_a} and \eqref{eq:Delta_T_g_b},
$$
\Delta \left[ \T_i (\eta)   g_i (\eta) \right] = O(C_M \delta  ),
$$
while \eqref{eq:building_blocks_f} and \eqref{eq:building_blocks_M_d} from Lemma \ref{lemma:building_blocks} imply that
$$
\frac{ \big[ (\T_1 (\xi ) - \T_1 (\eta))\cdot \T_1 ( \xi)  \big] }{ |\gamma_1 (\xi)- \gamma_1 (\eta)|^{2  } }= O(C_M  \mathcal{M}\k_1 (\xi)|\xi -\eta|^{\alpha -1 }).
$$
From these bounds, it follows that
$$
| H_{21}|_{L^p (\TT)} \leq C_M \delta  \left[ \int_{\TT } \Big| \int_{\TT }  \mathcal{M} \k_1(\xi) |\xi -\eta|^{ \alpha -1}    \,  d\eta\Big|^p \, d\xi \right]^\frac{1}{p} \leq C_M \delta,
$$
where we have used again the $L^p$ boundedness of the maximal function.

Next, we similarly bound $H_{22}$. As before, we first telescope,
\begin{equation}\label{eq:H2_aux1}
\begin{aligned}
\Delta \left[\frac{ \big[ (\T_i (\xi ) - \T_i (\eta))\cdot \T_i ( \xi)  \big] }{ |\gamma_i (\xi)- \gamma_i (\eta)|^{2  } }    \, \right] &= \Delta \left[\big[ (\T_i (\xi ) - \T_i (\eta))\cdot \T_i ( \xi)  \big]   \right] \frac{1 }{ |\gamma_1 (\xi)- \gamma_1 (\eta)|^{2  } }  \\
&\qquad+\big[ (\T_2 (\xi ) - \T_2 (\eta))\cdot \T_2 ( \xi)  \big]    \Delta \left[\frac{1 }{ |\gamma_i (\xi)- \gamma_i (\eta)|^{2  } }  \right].
\end{aligned}
\end{equation}
Let us consider terms in \eqref{eq:H2_aux1} one by one. By \eqref{eq:building_blocks_difference_Ma} from Lemma \ref{lemma:building_blocks_difference} and \eqref{eq:building_blocks_f} from Lemma \ref{lemma:building_blocks},
\begin{equation}
\begin{aligned}\label{eq:H2_aux2}
\frac{\Delta \left[\big[ (\T_i (\xi ) - \T_i (\eta))\cdot \T_i ( \xi)  \big]   \right]   }{ |\gamma_1 (\xi)- \gamma_1 (\eta)|^{2  } }   & = O ( C_M \delta |\xi-\eta|^{-1+\alpha}   \max_i \mathcal{M} \k_i (\xi) )  \\
&\qquad\qquad + O ( C_M   |\xi-\eta|^{-1+\alpha}    \mathcal{M} \Delta[\k_i] (\xi ) )
\end{aligned}
\end{equation}
and by \eqref{eq:building_blocks_M_d} and \eqref{eq:building_blocks_difference_g},
\begin{equation}
\begin{aligned}\label{eq:H2_aux3}
& \big[ (\T_2 (\xi ) - \T_2 (\eta))\cdot \T_2 ( \xi)  \big]    \Delta \left[\frac{1 }{ |\gamma_i (\xi)- \gamma_i (\eta)|^{2  } }  \right]\\
 & \qquad =  O(C_M \mathcal{M}\k_2(\xi)|\xi - \eta|^{-1+\alpha} C_\delta(\xi))  +O(C_M \delta  \mathcal{M}\k_2(\xi)|\xi - \eta|^{-1+2\alpha} ).
\end{aligned}
\end{equation}

Therefore, combining \eqref{eq:H2_aux2} and \eqref{eq:H2_aux3} we obtain
\begin{align*}
 \big| H_{22}(\xi)  \big| \leq C_M \delta \max_i \mathcal{M} \k_i (\xi) )      \int_{\TT }    |\xi -\eta|^{ \alpha -1}    \,  d\eta  + C_M    \mathcal{M} \Delta[\k_i] (\xi )     \int_{\TT }    |\xi -\eta|^{ \alpha -1}    \,  d\eta ,
\end{align*}
and integrating in $\xi$ gives  $| H_{21}|_{L^p (\TT)} \leq  C_M \delta. $

\noindent
\textbf{Estimates of $ H_{ 3}$:}

To show $|  H_{ 3}|_{ L^{p} (\TT) }
 \leq  C_M \delta  $, it suffices to obtain the bound
\begin{align}\label{eq:H3_aux1}
\sup_\xi \int_{\TT }
\left|  \T_i (\eta)  g_i (\eta)  \Delta \left[    \frac{ \big[ (\g_i (\xi ) - \g_i (\eta))\cdot \N_i ( \xi)  \big] }{ |\gamma_i (\xi)- \gamma_i (\eta)|^{2  } }    \, \right] \right|\,d\eta \leq C_M  \delta
\end{align}
since when the  difference $\Delta$ applies to $\k_i(\xi)$ or $\T_i(\eta)g_i(\eta)$, the bound of the resulting terms follows from \eqref{eq:building_blocks_d} and the definition of $\delta$.

To show \eqref{eq:H3_aux1}, we first telescope
\begin{align*}
\Delta \left[    \frac{ \big[ (\g_i (\xi ) - \g_i (\eta))\cdot \N_i ( \xi)  \big] }{ |\gamma_i (\xi)- \gamma_i (\eta)|^{2  } }    \, \right] &  =     \frac{ \Delta \big[ (\g_i (\xi ) - \g_i (\eta))\cdot \N_i ( \xi)  \big] }{ |\gamma_1 (\xi)- \gamma_1 (\eta)|^{2  } }    \,  \\
&\qquad+ \big[ (\g_2 (\xi ) - \g_2(\eta))\cdot \N_2 ( \xi)  \big] \Delta \left[    \frac{ 1}{ |\gamma_i (\xi)- \gamma_i (\eta)|^{2  } }    \, \right]
\end{align*}
and then apply \eqref{eq:building_blocks_difference_b} and \eqref{eq:building_blocks_f} to obtain that
\begin{align}\label{eq:H3_aux2}
    \frac{ \Delta \big[ (\g_i (\xi ) - \g_i (\eta))\cdot \N_i ( \xi)  \big] }{ |\gamma_1 (\xi)- \gamma_1 (\eta)|^{2  } }
 = O(C_M \delta |\xi -\eta|^{-1+\alpha}),
\end{align}
and \eqref{eq:building_blocks_d} and  \eqref{eq:building_blocks_difference_g} to obtain that
\begin{align}\label{eq:H3_aux3}
\big[ (\g_2 (\xi ) - \g_2(\eta))\cdot \N_2 ( \xi)  \big] \Delta \left[    \frac{ 1}{ |\gamma_i (\xi)- \gamma_i (\eta)|^{2  } }    \, \right]
 = O(C_M \delta |\xi -\eta|^{-1+\alpha}).
\end{align}

The bound \eqref{eq:H3_aux1} follows from \eqref{eq:H3_aux2} and \eqref{eq:H3_aux3} by a direct integration.

\noindent
\textbf{Estimates of $  H_{ 4}$:}

\begin{align*}
H_4 & = -  2  \int_{\TT }   \Delta [  \T_i (\eta )g_i (\eta) ]\big( (  \gamma_1  (\xi)- \gamma_1 (\eta)  ) \cdot \T_1 (\xi) \big)    \\
& \qquad \qquad \frac{     \big( (  \gamma_1 (\xi)- \gamma_1  (\eta)  ) \cdot (\T_1 (\xi) -\T_1 (\eta)) \big)  }{|\gamma_i (\xi)- \gamma_i (\eta)|^{4  }} \,   d\eta \\
& -2 \int_{\TT }     \T_2 (\eta )g_2(\eta) \Delta   \bigg[   \frac{    \big( (  \gamma_i  (\xi)- \gamma_i (\eta)  ) \cdot \T_i (\xi) \big)  }{|\gamma_i (\xi)- \gamma_i (\eta)|^{4  }} \\
& \qquad\qquad \times \big( (  \gamma_i (\xi)- \gamma_i  (\eta)  )  \cdot (\T_i (\xi) -\T_i (\eta)) \big) \bigg] \,   d\eta \\
& := H_{41} + H_{42}.
\end{align*}
We first claim that it suffices to consider the term $ H_{42}$ since the estimates for $K_4 $ in Proposition \ref{prop:derivatives_v} together with the simple bound $ |\Delta [  \T_i (\eta )g_i (\eta) ]| \leq C_M \delta$ imply the estimate for $H_{41}$.

For $H_{42}$, applying absolute value, let us further consider the decomposition
\begin{align*}
H_{42} (\xi ) \leq  & C_M (H_{421}(\xi )  + H_{422}(\xi ))
\end{align*}
where $H_{421} $ and $H_{422}$ are respectively
\begin{align*}
H_{421}(\xi ): = \int_{\TT }   \left|\Delta \left[   \frac{    \big( (  \gamma_i  (\xi)- \gamma_i (\eta)  ) \cdot \T_i (\xi) \big)    }{|\gamma_i (\xi)- \gamma_i (\eta)|^{2  }}\right]   \right|     \left| \frac{     \big( (  \gamma_1 (\xi)- \gamma_1  (\eta)  ) \cdot (\T_1 (\xi) -\T_1 (\eta)) \big)  }{|\gamma_1 (\xi)- \gamma_1 (\eta)|^{2  }} \right|  \, d\eta \\
H_{422}(\xi ): =    \int_{\TT }  \left|\frac{    \big( (  \gamma_2  (\xi)- \gamma_2 (\eta)  ) \cdot \T_2 (\xi) \big)    }{|\gamma_2 (\xi)- \gamma_2 (\eta)|^{2  }} \right|   \left| \Delta \left[   \frac{     \big( (  \gamma_i (\xi)- \gamma_i  (\eta)  ) \cdot (\T_i (\xi) -\T_i (\eta)) \big)  }{|\gamma_i (\xi)- \gamma_i (\eta)|^{2  }}\right]  \right|\, d\eta  .
\end{align*}

For $H_{421}$, we use Corollary \ref{corollary:delta_linear} and \eqref{eq:building_blocks_M_e}  to obtain that
\begin{align}
H_{421}(\xi ) \leq  O(C_M \max_i \mathcal{M}\k_i(\xi) \delta |\xi -\eta|^{1-\alpha }),
\end{align}
and Young's inequality implies that $|H_{421}|_{L^p(\TT)} \leq C_M \delta $.

For $H_{422}$,  we first bound the first factor by Corollary \ref{corollary:building_blocks},
\begin{align}\label{eq:H442_aux1}
H_{422}(\xi ) \leq  C_M  \int_{\TT }   |\xi -\eta|^{-1}  \left| \Delta \left[   \frac{     \big( (  \gamma_i (\xi)- \gamma_i  (\eta)  ) \cdot (\T_i (\xi) -\T_i (\eta)) \big)  }{|\gamma_i (\xi)- \gamma_i (\eta)|^{2  }}\right]  \right|\, d\eta  .
\end{align}
By  \eqref{eq:building_blocks_difference_Mb}, \eqref{eq:building_blocks_difference_g} and also \eqref{eq:building_blocks_M_e}, we telescope again and infer the bound
\begin{align}\label{eq:H442_aux2}
\Delta \left[   \frac{     \big( (  \gamma_i (\xi)- \gamma_i  (\eta)  ) \cdot (\T_i (\xi) -\T_i (\eta)) \big)  }{|\gamma_i (\xi)- \gamma_i (\eta)|^{2  }}\right]  =   C_M|\xi-\eta|^{ \alpha}  \Big( \delta   \max_i \mathcal{M} \k_i (\xi ) )  +  \mathcal{M} \Delta[\k_i] (\xi )   \Big).
\end{align}
So combining \eqref{eq:H442_aux1} and \eqref{eq:H442_aux2} we obtain
\begin{align*}
| H_{422} |_{L^p(\TT)}  & \leq  C_M \left[   \int_\TT \Big[  \int_{\TT }     C_M|\xi-\eta|^{-1 +  \alpha}  \Big( \delta   \max_i \mathcal{M} \k_i (\xi )    +  \mathcal{M} \Delta[\k_i] (\xi )  \Big)   \, d\eta \Big]^p \, d\xi \ \right]^\frac{1}{p} \\
\leq  C_M \delta
\end{align*}
where we have used the $L^p$-boundedness of the maximal function and Young's inequality.

Since all the terms in $H_4$ have been estimated, we conclude that
$$
|  H_{ 4}|_{ L^{p} (\TT) }
 \leq  C_M \delta  .
$$

\end{proof}

\subsection{The fixed-point argument}
To run the fixed point argument, we will rewrite \eqref{eq:CDE} in integral form:
\begin{equation}\label{cdeint}
\g(\xi,t) = \g_0(\xi) + \int_0^t v(\g(\xi,t' ),t' ) \, d t' .
\end{equation}
Now we prove that the integral equation \eqref{cdeint} is suited for a fixed-point argument.
\begin{proposition}\label{prop:fix_point}
For any  $1<p < \infty $ and $M>1$, there exist $T=T(M , p )>0$ such that the following holds. Given $  \g_0 \in B^{ \frac{M}{2}}_p$, the solution map $S:C([0,T]; B^M_p) \to C([0,T]; B^M_p) $
\begin{align}\label{eq:fixed_point_aux1}
\g  \mapsto S( \g )= \g_0(\xi) + \int_0^t v(\gamma(\xi,t'), t')  d t'
\end{align}
is well-defined and is a contraction on $C([0,T]; B^M_p)$, namely
\[ \|S(\g)\|_{C([0,T]; B^M_p)} \leq \rho \|\g\|_{C([0,T]; B^M_p)}, \]
with the contracting factor $\rho \leq C_M T <1$.
\end{proposition}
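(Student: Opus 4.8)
The plan is to verify the two hypotheses of the Banach fixed point theorem on the complete metric space $C([0,T];B^M_p)$ (completeness of the latter following from Lemma~\ref{lemma:completeness} and the standard theory of $C([0,T];Y)$ with $Y$ complete): that $S$ maps $C([0,T];B^M_p)$ into itself, and that $S$ contracts the metric inherited from $W^{2,p}(\TT)$ once $T=T(M,p)$ is small enough. All the analytic work is already in hand --- the bound $|v(\g)|_{W^{2,p}(\TT)}\le C_M$ of Proposition~\ref{prop:derivatives_v} and the Lipschitz bounds of Propositions~\ref{prop:Lip_v} and~\ref{prop:Lip_ds2v} --- so what remains is bookkeeping in the time variable together with one conversion of arc-length to Lagrangian derivatives.

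For the self-map property I would fix $\g\in C([0,T];B^M_p)$, set $\tilde\g=S(\g)$, and note that since $t'\mapsto\g(\cdot,t')$ is continuous into $W^{2,p}(\TT)$ and $\g\mapsto v(\g)$ is bounded and Lipschitz into $W^{2,p}(\TT)$ (see the contraction estimate below), the integrand $t'\mapsto v(\g(\cdot,t'))$ is continuous, hence Bochner integrable; thus $\tilde\g(\cdot,t)=\g_0+\int_0^t v(\g(\cdot,t'))\,dt'$ lies in $C^1([0,T];W^{2,p}(\TT))$ with $|\tilde\g(t)-\g_0|_{W^{2,p}(\TT)}\le C_M t$. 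Choosing $T$ with $C_M T\le M/2$ then gives $|\tilde\g(t)|_{W^{2,p}(\TT)}\le M$ because $\g_0\in B^{M/2}_p$. For the remaining constraints in~\eqref{eq:B_p_M_definition} I would use the pointwise bound $|\p_\xi v(\g(\xi,t'))|=g(\xi,t')\,|(\p_s v)(\g(\xi,t'))|\le C_M$ from Proposition~\ref{prop:derivatives_v}, giving $\sup_\xi|\dot{\tilde\g}(\xi,t)-\dot\g_0(\xi)|\le C_M t$; writing $\tilde g=|\dot{\tilde\g}|$ and $g_0=|\dot\g_0|$ this forces $\sup_\xi|\tilde g(\xi,t)-g_0(\xi)|\le C_M t$ and, for $t\le T$,
\[
|\tilde\g(\xi,t)-\tilde\g(\eta,t)|\ \ge\ |\g_0(\xi)-\g_0(\eta)|-C_M T\,|\xi-\eta|\ \ge\ \Big(\tfrac{1}{\Gamma(\g_0)}-C_M T\Big)|\xi-\eta|.
\]
Since $\g_0\in B^{M/2}_p$ gives $1/\Gamma(\g_0)\ge 2/M$ and $2/M\le g_0\le M/2$, a further shrinking of $T$ (still depending only on $M,p$) forces $\Gamma(\tilde\g(t))\le M$ and $|\tilde g(t)|_*\le M$, so $\tilde\g(t)\in B^M_p$ for all $t\in[0,T]$.

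For the contraction, with $\delta=|\g_1-\g_2|_{W^{2,p}(\TT)}$ I would first upgrade Propositions~\ref{prop:Lip_v} and~\ref{prop:Lip_ds2v} to the full bound $|v(\g_1)-v(\g_2)|_{W^{2,p}(\TT)}\le C_M\delta$: using $\p_\xi^2 v(\g_i)=\dot g_i\,(\p_s v)(\g_i)+g_i^2\,(\p_s^2 v)(\g_i)$ and applying the difference operator $\Delta$, the $L^p$ control of $\Delta[(\p_s^2 v)(\g_i)]$ from Proposition~\ref{prop:Lip_ds2v} and of $|(\p_s v)(\g_1)|_{L^\infty}$, $|(\p_s^2 v)(\g_1)|_{L^p}$ from Proposition~\ref{prop:derivatives_v} reduce everything to the difference bounds~\eqref{eq:Delta_T_g_a}--\eqref{eq:Delta_T_g_c} for $g_i$ together with control of the intermediate term $\Delta[\p_\xi v_i]$, which is supplied by Gagliardo--Nirenberg interpolation between $\Delta[v_i]$ (bounded in $L^\infty$ by Proposition~\ref{prop:Lip_v}) and $\Delta[\p_\xi^2 v_i]$, the small interpolation constant allowing $\Delta[\p_\xi^2 v_i]$ to be absorbed on the left. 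Granting this, for $\g_1,\g_2\in C([0,T];B^M_p)$ one has $S(\g_1)(t)-S(\g_2)(t)=\int_0^t\big(v(\g_1(t'))-v(\g_2(t'))\big)\,dt'$, whence
\[
\sup_{t\in[0,T]}|S(\g_1)(t)-S(\g_2)(t)|_{W^{2,p}(\TT)}\ \le\ C_M T\ \sup_{t\in[0,T]}|\g_1(t)-\g_2(t)|_{W^{2,p}(\TT)},
\]
and picking $T=T(M,p)$ so that $\rho:=C_M T<1$ closes the argument.

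The hard part will be the conversion in the contraction step from the arc-length second-derivative bound of Proposition~\ref{prop:Lip_ds2v} to a bound on $\p_\xi^2$ of the velocity difference: because $\g_1$ and $\g_2$ carry different arc-length metrics $g_1\ne g_2$, one cannot merely quote the estimate but must track how $\Delta[g_i]$, $\Delta[1/g_i]$ and $\Delta[\dot g_i]$ propagate through the identity relating $\p_s$ and $\p_\xi$, and must break the apparent circularity between $\Delta[\p_\xi v_i]$ and $\Delta[\p_\xi^2 v_i]$ by an absorption argument. By contrast, once the $W^{2,p}$ Lipschitz estimate for $\g\mapsto v(\g)$ is in place, both the self-map property and the contraction are immediate from integrating in time and shrinking $T$.
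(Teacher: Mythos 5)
Your proposal is correct and follows essentially the same route as the paper: integrate $v(\g)$ in time, use Proposition~\ref{prop:derivatives_v} to keep $\tilde\g$ inside $B^M_p$ for small $T$, and use Propositions~\ref{prop:Lip_v} and~\ref{prop:Lip_ds2v} for the contraction. The one place where you are more explicit than the paper is the conversion from $\p_s^2$ to $\p_\xi^2$ in the contraction step: the paper only records that $\Delta[g_i]$ and $g_i^{-1}$ are under control and invokes \emph{interpolations}, whereas you correctly flag that $\Delta[\dot g_i\,\p_s v_i]$ produces the term $\dot g_2\,\Delta[\p_s v_i]$, whose $L^\infty$ control is \emph{not} directly among the stated propositions, and you close this with Gagliardo--Nirenberg plus Young to absorb the resulting $\varepsilon\,|\Delta[\p_\xi^2 v_i]|_{L^p}$ --- a legitimate way to fill what is, in the paper's write-up, a rather terse step.
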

\begin{proof}
Let us first show that the map $S$ is $C([0,T]; B^M_p) \to C([0,T]; B^M_p) $. Given $ \g (\xi, t)$, denote by $  \widetilde{\g}(\xi,t) $ the image, $ \widetilde{\g}  =S(  \g)$.
By \eqref{eq:fixed_point_aux1}, for any $t,t_0 \in [0,T]$, we have
\begin{align*}
  | \widetilde{\g}(t) -     \widetilde{\g} (t_0)|_{W^{2,p}(\TT)}   \leq |t- t_0|   \sup_{t' \in [0,T]} \left |  v(\g )    \right |_{W^{2,p}(\TT)}
  \leq |t-t_0|  C(M, p) .
\end{align*}
So $\sup_{t\in[0,T]} |   \widetilde{\g}  |_{W^{2,p}(\TT)} \leq M$ if we choose $T =T(M , p )$ sufficiently small. We also have the continuity of the map in the norm $ | \cdot |_{W^{2,p}(\TT)} $.

It remains to show $  | \widetilde{g}  |_* \leq M $ and $ \Gamma( \widetilde{\g}  ) \leq M $. The bound $  | \widetilde{g}  |_* \leq M $ follows from the embedding $  W^{1,p}(\TT)\subset C(\TT)$ by the following argument. Since
$$
\big|  \widetilde{g} -  g_0\big|  \leq   T  \sup_{t' \in [0,T]} \int_0^t  \left | \p_s v(\gamma(\xi,t'), t') g(\xi,t) \right |_{L^\infty(\TT)}  d t'     \leq  T  C(M, p) \leq \frac{1}{2M}  ,
$$
and $\frac{2}{M} \leq g_0 \leq \frac{M}{2}$, we can choose $T>0$ sufficiently small depending on $M$ and $p$ such that $\frac{1}{M} \leq \widetilde{g} \leq  M  $.

The bound $ \Gamma( \widetilde{\g}) \leq M $ follows from $\Gamma( \k_0 , g_0 ) \leq M/2$ and arguing similarly to Lemma \ref{lemma:completeness}. Indeed, for any $t\in[0,T]$ and any $\xi\neq \eta$, by \eqref{eq:fixed_point_aux1} and the mean value theorem we have
\begin{align*}
|\widetilde{\g}(\xi,t ) -  \widetilde{\g}(\eta,t )| & \geq | {\gamma}_0 (\xi) -  {\gamma}_0 (\eta)| -  \int_0^t \Big|v(\gamma(\xi,t'), t') -v(\gamma(\eta,t'), t')\Big|   d t'\\
&\geq | {\gamma}_0 (\xi) -  {\gamma}_0 (\eta)| -  M T |\xi - \eta |\sup_{t \in[0,T]} \Big|    \p_s v(\gamma(\cdot,t ), t )  \Big|_{L^\infty(\TT)}
\end{align*}
So if $T>0$ is sufficiently small depending on $M$, then by Proposition \ref{prop:derivatives_v} and the assumption $\g_0 \in B^{\frac{M}{2}}_p$
\begin{align*}
\frac{|\xi - \eta|}{ | \widetilde{\g}(\xi,t ) -  \widetilde{\g}(\eta,t )|} & \leq \frac{1}{\frac{|  {\g}_0(\xi ) -   {\g}_0(\eta )| }{|\xi - \eta|} - C_M T      } \\
&\leq \frac{1}{\frac{2 }{M} - C_M T      }  \leq M.
\end{align*}

We have thus shown the solution map  $S:C([0,T]; B^M_p) \to C([0,T]; B^M_p) $ is well-defined.

Finally, we show the contraction property of $S$ on $C([0,T]; B^M_p)$, which will follow directly from the Lipschitz continuity estimates of $\p_s v$ and $\p_s^2 v$ from Proposition \ref{prop:Lip_v} and \ref{prop:Lip_ds2v}. Let $\g_i \in C([0,T]; B^M_p) $ and denote by $ \widetilde{\g}_i =S(\g_i) $ their respective images. Then by \eqref{eq:fixed_point_aux1}
$$
|\Delta \left[ \widetilde{\g}_i \right]   |_{C W^{2,p}} \leq T \sup_t \left |  \Delta \left[  v_i(\cdot, t ) \right]   \right |_{ W^{2,p}(\TT)}
$$
By taking sufficiently small $T>0$, using Proposition \ref{prop:Lip_v} and interpolations, it suffices to show
\begin{equation}\label{eq:fixed_point_aux2}
\sup_t \left | \Delta \left[\p_\xi^2 v_i(\cdot, t )   \right]  \right |_{ L^{ p}(\TT)}  \leq C_M \delta .
\end{equation}
Since $\p_\xi  = g_i^{-1} \p_s   $ on each $\g_i$, we have that
\begin{align*}
\Delta \left[   \p_\xi^2 v_i(\cdot, t )  \right]   = \Delta \left[ \frac{-\dot{g_i}}{g_i^2} \p_s v_i + \frac{\p_s^2 v_i }{g_i^2 }  \right] .
\end{align*}
Since $\sup_t|\Delta [g_i]|_{W^{1,p}(\TT) } \leq C_M \delta $ and $g_i\geq \frac{1}{M}$, \eqref{eq:fixed_point_aux2} holds thanks to Proposition \ref{prop:Lip_ds2v}.

\end{proof}

Proposition \ref{prop:fix_point} yields the  local wellposedness for $W^{2,p} $ patches.

\begin{theorem}\label{thm:localW2p}
Let $1<p<\infty$. For any $ \g_0 \in X_p$, there is a unique local solution $\g$ to \eqref{eq:CDE} in $C([0,T]; X_p)  $ for some $T>0$ satisfying $\gamma(0)=\g_0.$
\end{theorem}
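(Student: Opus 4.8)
The plan is to solve the integral equation \eqref{cdeint} by a contraction-mapping argument built on Proposition \ref{prop:fix_point}, then promote the fixed point to a genuine solution of \eqref{eq:CDE} in $C([0,T];X_p)$, and finally prove uniqueness. To begin, since every element of $X_p$ lies in some $B^{M_0}_p$, given $\g_0\in X_p$ I pick $M_0>1$ with $\g_0\in B^{M_0}_p$ and set $M=2M_0$, so that $\g_0\in B^{M/2}_p$. Proposition \ref{prop:fix_point} then supplies $T=T(M,p)>0$ for which the map $S$ of \eqref{eq:fixed_point_aux1} sends $C([0,T];B^M_p)$ into itself and is a contraction there. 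Since $B^M_p$ is a complete metric space by Lemma \ref{lemma:completeness}, the space $C([0,T];B^M_p)$ equipped with the metric $\sup_{t\in[0,T]}|\g_1(t)-\g_2(t)|_{W^{2,p}(\TT)}$ is complete as well, so the Banach fixed-point theorem yields a unique fixed point $\g\in C([0,T];B^M_p)$, which by construction satisfies \eqref{cdeint} with $\g(0)=\g_0$.

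Next I would check that this $\g$ actually solves \eqref{eq:CDE}. By Propositions \ref{prop:Lip_v} and \ref{prop:Lip_ds2v}, together with the interpolation used in the proof of Proposition \ref{prop:fix_point} to recover the full $W^{2,p}(\TT)$ norm of a difference from its $L^\infty$ part and the $L^p$ norm of a second derivative, the nonlinear map $\tilde\g\mapsto v(\tilde\g)$ is Lipschitz, hence continuous, from $B^M_p$ into $W^{2,p}(\TT)$. Composing with $t\mapsto\g(\cdot,t)\in C([0,T];B^M_p)$ shows that $t\mapsto v(\g(\cdot,t))$ lies in $C([0,T];W^{2,p}(\TT))$, so the right-hand side of \eqref{cdeint} is the integral of a continuous Banach-space-valued function and is therefore differentiable in $t$ with $\p_t\g=v(\g)$, i.e.\ \eqref{eq:CDE} holds. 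Since $B^M_p\subset X_p$, we have $\g\in C([0,T];X_p)$.

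For uniqueness in $C([0,T];X_p)$, suppose $\g^{(1)},\g^{(2)}\in C([0,T];X_p)$ both solve \eqref{eq:CDE} with datum $\g_0$. Along each flow the quantities $|\g|_{W^{2,p}(\TT)}$, $|g|_*$, and the arc-chord $\Gamma$ vary continuously in $t$: this is immediate for the first, follows from the embedding $W^{1,p}(\TT)\subset C(\TT)$ for the second, and for $\Gamma$ follows from its continuity in the $C^1$ topology (into which $W^{2,p}(\TT)$ embeds), using the elementary bound $|\g(\xi)-\g(\eta)|\geq|\g_0(\xi)-\g_0(\eta)|\bigl(1-\Gamma(\g_0)\,|\dot\g-\dot\g_0|_{L^\infty(\TT)}\bigr)$. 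Hence there are $M'>1$ and $T'\in(0,T]$ with $\g^{(j)}(t)\in B^{M'}_p$ for all $t\in[0,T']$ and $j=1,2$. Subtracting the two integral equations \eqref{cdeint} and invoking Propositions \ref{prop:Lip_v}--\ref{prop:Lip_ds2v} (again with interpolation) on $[0,T']$ gives
\begin{equation*}
\sup_{t'\leq t}\bigl|\g^{(1)}(t')-\g^{(2)}(t')\bigr|_{W^{2,p}(\TT)}\leq C_{M'}\int_0^t\sup_{t''\leq t'}\bigl|\g^{(1)}(t'')-\g^{(2)}(t'')\bigr|_{W^{2,p}(\TT)}\,dt',
\end{equation*}
so Gr\"onwall's inequality forces $\g^{(1)}\equiv\g^{(2)}$ on $[0,T']$; the set of times of coincidence, being closed by continuity and open by iterating the argument from any such time, is all of $[0,T]$.

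I expect the uniqueness step to be the main obstacle, rather than the existence part: the Lipschitz estimates of Propositions \ref{prop:Lip_v}--\ref{prop:Lip_ds2v} are available only inside a fixed ball $B^{M'}_p$, so one first has to trap two a priori unrelated solutions inside a common such ball over a short time interval, which rests on the continuity of the flow in $W^{2,p}(\TT)$ together with the uniform control of $|g|_*$ and $\Gamma$ discussed above. The construction of the solution is otherwise a direct assembly of Lemma \ref{lemma:completeness} and Propositions \ref{prop:derivatives_v}, \ref{prop:Lip_v}, \ref{prop:Lip_ds2v}, and \ref{prop:fix_point}.
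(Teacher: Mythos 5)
Your proposal follows the paper's approach exactly: the paper simply cites Proposition~\ref{prop:fix_point} for the construction of the fixed point and leaves the passage from uniqueness within the ball $C([0,T];B^M_p)$ to uniqueness in the full space $C([0,T];X_p)$ implicit, which you correctly spell out by trapping any two solutions in a common ball $B^{M'}_p$ on a short time interval (via continuity of $|\cdot|_{W^{2,p}(\TT)}$, $|g|_*$, and $\Gamma$) and closing with Gr\"onwall plus an open--closed argument. This is a sound and helpful clarification of a step the paper glosses over, using the same key ingredients (Lemma~\ref{lemma:completeness} and Propositions~\ref{prop:Lip_v}, \ref{prop:Lip_ds2v}, \ref{prop:fix_point}).
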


The continuity in time with values in $X_p$ is understood in the sense of $W^{2,p}$ norm.
Moreover, due to $W^{2,p}$ bounds on $v$ from Proposition \ref{prop:derivatives_v} and Proposition \ref{prop:Lip_ds2v}, it follows from \eqref{cdeint} that the solution is actually in $C^{1 }([0,T]; W^{2,p}(\TT))$ as we show in the next section.

\section{Global wellposedness in \texorpdfstring{$W^{2,p}$}{W2p} from  \texorpdfstring{$C^{1,\alpha}$}{C1a}}\label{sec:globalregularity}

In this section, we explain how to use the global $C^{1,\alpha}$ regularity to obtain the global $W^{2,p}$ regularity for the vortex patches.

\subsection{Improved estimates assuming \texorpdfstring{$C^{1,\alpha}$}{C 1,alpha}}

The key ingredient we need is the following proposition, which is a direct consequence of the local well-posedness of the CDE in $C^{1,\alpha}$ combined with the global regularity of $C^{1,\alpha}$ patches.

\begin{proposition}\label{prop:C_1_alpha_regularity}
Let $1 < p < \infty $. For any $ \g_0 \in X_p$, the unique solution $\g \in C([0,T];X_p (\TT) )$ to \eqref{eq:CDE}  on some $[0,T]$
can be uniquely continued for all times as a $C^{1,\alpha}$   solution  to \eqref{eq:CDE}  satisfying
\begin{equation}\label{globreg313}
\sup_{t\in[0,T_1]} \left( |g|_* +   |\T|_{C^\alpha(\TT)}+ \Gamma + |\p_s v|_{L^\infty(\TT)} \right)\leq C( \g_0, T_1 ) < \infty.
\end{equation}
\end{proposition}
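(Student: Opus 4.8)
The plan is to combine the local well-posedness theorem just proved in $W^{2,p}$ (Theorem~\ref{thm:localW2p}) with the classical global regularity theory for $C^{1,\alpha}$ vortex patches, exploiting the fact that $W^{2,p}(\TT)\hookrightarrow C^{1,\alpha}(\TT)$ with $\alpha = 1-\tfrac1p>0$. First I would observe that the given datum $\g_0\in X_p$ is in particular a $C^{1,\alpha}$ parametrization of the boundary of a $C^{1,\alpha}$ domain, so by the local well-posedness of \eqref{eq:CDE} in the $C^{1,\alpha}$ class (Bertozzi~\cite{MR1867882}, see also \cite{MR2686124}) there is a $C^{1,\alpha}$ solution on a short time interval, and by the uniqueness statement in that theory it must coincide with the $W^{2,p}$ solution $\g\in C([0,T];X_p)$ on their common interval of existence (both are, after all, the Lagrangian trajectories of the unique Yudovich solution with initial vorticity $\chi_{\Omega_0}$). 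Hence the $W^{2,p}$ solution extends the $C^{1,\alpha}$ one, and it suffices to produce uniform-in-time $C^{1,\alpha}$ bounds.

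Next I would invoke the global regularity of $C^{1,\alpha}$ patches: by the theorems of Chemin~\cite{MR1235440}, Bertozzi--Constantin~\cite{MR1207667}, or Serfati~\cite{MR1270072}, the $C^{1,\alpha}$ solution exists for all time and the $C^{1,\alpha}$ norm of the patch boundary, the arc-chord constant $\Gamma$, and the ellipticity $|g|_*$ remain finite on every bounded time interval, with bounds depending only on the initial data and the length of the interval. Concretely, on any $[0,T_1]$ one has a bound on $|\T|_{C^\alpha(\TT)}$, on $\Gamma$, and on $|g|_*$; and since the velocity along the boundary is given by the Biot--Savart integral \eqref{eq:velocity}, the a.e.\ formula \eqref{eq:d_v_xi} for $\p_s v$ together with the estimate \eqref{eq:eq:d_v_xi_aux1} (which only uses $C^{1,\alpha}$ regularity, i.e.\ Corollary~\ref{corollary:building_blocks} and bound~\eqref{eq:building_blocks_b}) yields $|\p_s v|_{L^\infty(\TT)}\le C$ in terms of those same geometric quantities. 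Collecting these gives \eqref{globreg313}.

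The one point that needs a little care, and which I expect to be the main (though still modest) obstacle, is the identification/matching step: one must be sure that the $W^{2,p}$ solution and the globally-existing $C^{1,\alpha}$ solution are literally the same curve, so that the a~priori $C^{1,\alpha}$ bounds genuinely apply to $\g$. This follows because both are reparametrization-covariant Lagrangian descriptions of the boundary of the same unique Yudovich patch $\Omega(t)$, and the $C^{1,\alpha}$ contour solution is unique in its class; alternatively one can argue directly from the integral equation \eqref{cdeint} and a Gronwall/continuation argument that the $W^{2,p}$ solution cannot leave the $C^{1,\alpha}$ class before the $C^{1,\alpha}$ solution does. Once this is granted, \eqref{globreg313} is just a restatement of the known $C^{1,\alpha}$ global bounds plus the elementary velocity estimate above, and the proposition follows.
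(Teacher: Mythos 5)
Your proposal is correct and follows the same route as the paper: use the embedding $W^{2,p}(\TT)\hookrightarrow C^{1,\alpha}(\TT)$ to place $\g_0$ in the $C^{1,\alpha}$ class, then invoke the contour-dynamics estimates and global $C^{1,\alpha}$ patch regularity from \cite[Section 8.3]{MR1867882} to get \eqref{globreg313}. The identification of the $W^{2,p}$ and $C^{1,\alpha}$ contour solutions via Yudovich uniqueness and the derivation of the $|\p_s v|_{L^\infty}$ bound from the geometric quantities are points the paper leaves implicit in its citation, but you handle them correctly.
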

\begin{proof}
By the embedding $ W^{2,p} (\TT) \subset C^{1,\alpha}(\TT) $, we have that $\g_0 \in C^{1,\alpha}(\TT).$
The result then immediately follows from the estimates for the contour equation and the global  $C^{1,\alpha}$ regularity in \cite[Section 8.3]{MR1867882}.

\end{proof}

In the remainder of this section, we consider a fixed vortex patch solution $\gamma \in C( [0,T];X_p)$ with initial data $\g_0$. To show global regularity in $X_p$, it suffices to show $\g \in C( [0,\infty );\dot{W}^{2,p}(\TT))$ as the rest of the information is encoded in the global $C^{1,\alpha}$ regularity.

The constant $C_0>0$ below is a positive constant depending only on the initial data $\g_0$, $T>0$, and $p>1$ that may change from line to line.
Its existence is ensured by Proposition \ref{prop:C_1_alpha_regularity}.
Using the additional bounds in \eqref{globreg313},
we have the improved building blocks estimates, analogous to Lemma \ref{lemma:building_blocks}.

\begin{lemma}\label{lemma:improved_building_blocks}
Let $\gamma \in C( [0,T];X_p)$ be a solution of  \eqref{eq:CDE} with initial data $\g_0$. Let $\alpha = 1- \frac{1}{p}$. For any $t\in [0,T]$ and any $\xi, \eta\in \TT$, we have
\begin{subequations}
\begin{align}
\T(\xi)  \cdot \T(\eta) &= 1+ O(C_0 |\xi -\eta |^{2\alpha }) \label{eq:improved_a}\\
\T(\xi)  -\T(\eta) &=  O(C_0 |\xi -\eta |^{ \alpha }) \label{eq:improved_b}\\
\T(\eta) \cdot \N(\xi) & = O( C_0 |\xi -\eta |^{\alpha }) \label{eq:improved_c} \\
(\g(\xi) - \g(\eta)) \cdot \N(\xi) &= O(C_0 |\xi -\eta |^{1+\alpha })  \label{eq:improved_d} \\
(\T(\xi) - \T(\eta)) \cdot \T(\xi) &= O( C_0 |\xi -\eta |^{2\alpha }) \label{eq:improved_e} \\
|\g(\xi) - \g(\eta)|^{-1} &= O( C_0 |\xi -\eta|^{-1}) \label{eq:improved_f}
\end{align}
\end{subequations}
and  for any $ \zeta \in \TT$ lying between $\eta$ and $\xi$, the maximal estimates
\begin{subequations}
\begin{align}
\T(\eta) \cdot \N(\xi) & =  O( C_0 \mathcal{M}\k ( \zeta ) |\xi -\eta | ) \label{eq:improved_M_a} \\
\T(\eta) \cdot \T(\xi) & = 1+ O( C_0 \mathcal{M}\k ( \zeta ) |\xi -\eta |^{1+\alpha } ) \label{eq:improved_M_b} \\
(\g(\xi) - \g(\eta)) \cdot \N(\xi) &= O( C_0 \mathcal{M}\k ( \zeta ) |\xi -\eta |^{2})  \label{eq:improved_M_c} \\
  \T(\xi)  \cdot \left[\T(\xi) -\T(\eta)  \right] &= O( C_0 \mathcal{M}\k ( \zeta ) |\xi -\eta |^{1+\alpha}) \label{eq:improved_M_d} \\
\left[(\g(\xi) - \g(\eta) \right]\cdot \left[\T(\xi) -\T(\eta)  \right] &= O( C_0 \mathcal{M}\k (\zeta ) |\xi -\eta |^{2+\alpha}) \label{eq:improved_M_e}.
\end{align}

\end{subequations}

Finally,
\begin{align}\label{eq:improved_linear}
(\g(\xi) - \g(\eta)) \cdot \T(\xi) &= g(\zeta)(\xi -\eta )+ O(C_0 |\xi -\eta |^{1+ \alpha }).
\end{align}
\end{lemma}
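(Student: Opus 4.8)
The plan is to reduce this lemma directly to Lemma \ref{lemma:building_blocks} by showing that the solution curve $\gamma(\cdot,t)$ stays in a single ball $B^M_p$ uniformly for $t\in[0,T]$, with $M$ depending only on $\g_0$, $T$ and $p$, and then invoking the already-established estimates time-slice by time-slice.

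First I would observe that, since $\gamma\in C([0,T];X_p)$ and $[0,T]$ is compact, the quantity $\sup_{t\in[0,T]}|\gamma(\cdot,t)|_{W^{2,p}(\TT)}$ is finite; call it $M_1$. In particular $\sup_{t\in[0,T]}|\k(\cdot,t)|_{L^p(\TT)}$ is finite, and this is the genuinely new ingredient not already visible in \eqref{globreg313}. Next, Proposition \ref{prop:C_1_alpha_regularity} supplies a finite bound $M_2$ for $\sup_{t\in[0,T]}\big(|g|_*+\Gamma\big)$. Setting $M:=\max\{2,M_1,M_2\}$ gives $\gamma(\cdot,t)\in B^M_p$ for every $t\in[0,T]$.

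The second step is simply to apply Lemma \ref{lemma:building_blocks} at each fixed time $t$: since $\gamma(\cdot,t)\in B^M_p$, all of the pointwise estimates \eqref{eq:building_blocks_a}--\eqref{eq:building_blocks_f}, the maximal estimates \eqref{eq:building_blocks_M_a}--\eqref{eq:building_blocks_M_e}, and the linear estimate \eqref{eq:building_blocks_linear} hold with the constant $C_M$. By construction $C_M$ depends only on $M$ and $p$, hence only on $\g_0$, $T$ and $p$, so it is an admissible value of $C_0$; relabeling $C_M$ as $C_0$ yields exactly \eqref{eq:improved_a}--\eqref{eq:improved_linear}. Since the time variable is suppressed throughout, this finishes the argument.

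I do not expect a real obstacle here: the lemma is a corollary of Lemma \ref{lemma:building_blocks} once one has the uniform membership $\gamma(\cdot,t)\in B^M_p$. The only point requiring a moment's care — and worth stating explicitly in the proof — is that this membership is not entirely furnished by \eqref{globreg313}: the control of $|\k|_{L^p}$ (equivalently, of the full $W^{2,p}$ norm, hence of $M_1$) comes from the local $W^{2,p}$ theory of Theorem \ref{thm:localW2p} together with continuity in time, whereas Proposition \ref{prop:C_1_alpha_regularity} provides precisely the complementary quantities $|g|_*$, $|\T|_{C^\alpha(\TT)}$, $\Gamma$ and $|\p_s v|_{L^\infty(\TT)}$ that one needs in addition in order to land in a fixed $B^M_p$ independent of $t$.
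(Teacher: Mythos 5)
Your reduction to Lemma~\ref{lemma:building_blocks} has a genuine gap: the constant you manufacture is not the $C_0$ the lemma promises. The whole point of the statement is that $C_0$ depends \emph{only} on $\g_0$, $T$, and $p$ --- in particular, \emph{not} on the $W^{2,p}$ norm of the solution at positive times. Your first step produces $M_1 := \sup_{t\in[0,T]}|\gamma(\cdot,t)|_{W^{2,p}(\TT)}$, which is finite by continuity and compactness, but is a quantity determined by the solution and is \emph{a priori} uncontrolled in terms of $\g_0,T,p$. Feeding $M:=\max\{2,M_1,M_2\}$ into Lemma~\ref{lemma:building_blocks} therefore yields a constant that secretly depends on $\sup_t|\k|_{L^p}$, which is exactly what Proposition~\ref{prop:Global_Sobolev} and the Gronwall argument in Theorem~\ref{thm:global_sobolev} are trying to \emph{prove} is finite in terms of the data. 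Your version of the lemma would make the downstream continuation argument circular.

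The correct route, and the one the paper indicates by ``replace $C_M$ by $C_0$,'' is to rerun the estimates of Lemma~\ref{lemma:building_blocks} but swap out the one ingredient that carried the $|\k|_{L^p}\le M$ dependence: the H\"older modulus of $\T$. In the original proof, $|\T(\xi)-\T(\eta)|\le C_M|\xi-\eta|^\alpha$ was deduced from $|\k|_{L^p}|g|_{L^\infty}$ via the fundamental theorem of calculus. Here one instead takes $|\T(\xi)-\T(\eta)|\le |\T|_{C^\alpha(\TT)}|\xi-\eta|^\alpha$ directly, and $|\T|_{C^\alpha(\TT)}$ is one of the quantities controlled by \eqref{globreg313}, uniformly in $t$, by a constant depending only on $\g_0,T$. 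All the remaining pointwise bounds \eqref{eq:improved_a}--\eqref{eq:improved_f} and \eqref{eq:improved_linear} follow from that plus the $|g|_*$ and $\Gamma$ bounds in \eqref{globreg313}, and the maximal estimates \eqref{eq:improved_M_a}--\eqref{eq:improved_M_e} use only $|g|_{L^\infty}$ and the $C^\alpha$ modulus of $\T$ together with $\mathcal{M}\k(\zeta)$, which appears in the conclusion rather than as a constant. No bound on $|\k|_{L^p}$ is needed anywhere, which is precisely what keeps $C_0$ independent of the solution.
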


The proof of Lemma \ref{lemma:improved_building_blocks} follows the same argument as Lemma \ref{lemma:building_blocks}: we just need to replace the constant $C_M$ there by $C_0$.

We are ready to prove the improved Sobolev estimate for $v(\g)$.

\begin{proposition}\label{prop:Global_Sobolev}
Let $1< p < \infty $. If $ \g \in C( [0,T];X_p)$ is a solution of \eqref{eq:CDE}, then
\begin{align}\label{eq:Sobolev_K}
|\p_s^2 v  |_{L^p(\TT)} \leq C_0 | \gamma  |_{W^{2,p}(\TT)}
\end{align}
for all $0 \leq t \leq T.$
\end{proposition}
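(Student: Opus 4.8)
The plan is to revisit the four-term decomposition $\p_s^2 v = K_1 + K_2 + K_3 + K_4$ from Proposition~\ref{prop:derivatives_v} and re-estimate each $K_i$, but now tracking the dependence on $|\gamma|_{W^{2,p}(\TT)}$ \emph{linearly} rather than absorbing everything into a constant $C_M$. The point is that in Proposition~\ref{prop:derivatives_v} the bound $|\k|_{L^p(\TT)}\le M$ was used to kill several integrals, whereas here we only have the a priori control from \eqref{globreg313} on $|g|_*$, $|\T|_{C^\alpha}$, $\Gamma$ and $|\p_s v|_{L^\infty}$ --- all of which are now baked into $C_0$ --- and we want the $L^p$ norm of the curvature to appear explicitly on the right-hand side, since $|\k|_{L^p(\TT)} \sim |\gamma|_{\dot W^{2,p}(\TT)}$ up to the factors $|g|_*$ already in $C_0$.

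First I would handle $K_1$, the principal-value term. Using Corollary~\ref{corollary:building_blocks} (with the constant renamed $C_0$ via Lemma~\ref{lemma:improved_building_blocks}), write the kernel as $\frac{\xi-\eta}{g(\zeta)|\xi-\eta|^2} + O(C_0|\xi-\eta|^{\alpha-1})$. The singular part is $\k(\eta)\N(\eta)$ convolved against a Hilbert-transform-type kernel, so $L^p$-boundedness of the Hilbert transform gives $\le C_0|\k|_{L^p(\TT)}$; the remainder is handled by Young's inequality against the integrable function $|\xi-\eta|^{\alpha-1}$, again yielding $\le C_0|\k|_{L^p(\TT)}$. For $K_3 = -\k(\xi)\int_\TT \T(\eta)\frac{(\gamma(\xi)-\gamma(\eta))\cdot\N(\xi)}{|\gamma(\xi)-\gamma(\eta)|^2}g(\eta)\,d\eta$, the integral is bounded pointwise by $C_0$ using \eqref{eq:improved_d}, so $|K_3|_{L^p(\TT)} \le C_0|\k|_{L^p(\TT)}$ immediately. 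For $K_2$ and $K_4$, I would use the maximal-function estimates \eqref{eq:improved_M_d} and \eqref{eq:improved_M_e} from Lemma~\ref{lemma:improved_building_blocks}: each integrand is bounded by $C_0\,\mathcal{M}\k(\xi)\,|\xi-\eta|^{\alpha-1}$, so after integrating in $\eta$ we get $|K_j(\xi)| \le C_0\,\mathcal{M}\k(\xi)$, and then the $L^p$-boundedness of the maximal function ($p>1$) gives $|K_j|_{L^p(\TT)} \le C_0|\mathcal{M}\k|_{L^p(\TT)} \le C_0|\k|_{L^p(\TT)}$. Summing the four contributions and using $|\k|_{L^p(\TT)} \le C_0|\gamma|_{W^{2,p}(\TT)}$ (since $\k = -\frac{1}{g}\ddot\gamma\cdot\N \cdot \frac{1}{g}$-type expression with $|g|_*$ controlled, plus lower-order terms involving $\dot\gamma$ which are controlled via $C^{1,\alpha}$) yields \eqref{eq:Sobolev_K}.

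The main obstacle, and the reason this does not follow verbatim from Proposition~\ref{prop:derivatives_v}, is bookkeeping the linearity: in the earlier proof one freely wrote $C_M$ for anything bounded by powers of $M$, including $|\k|_{L^p(\TT)}$ itself, so one has to go back through and verify that in each of $K_1,\dots,K_4$ the curvature appears exactly to the first power and never hidden inside a constant. The genuinely delicate point is the principal-value term $K_1$: one must be careful that the Hilbert transform is applied to $\k(\eta)\N(\eta)g(\eta)$ --- whose $L^p$ norm is $\le C_0|\k|_{L^p(\TT)}$ since $\N$ and $g$ are bounded --- and not inadvertently to something that already costs a derivative of $\k$. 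Everything else is a routine re-run of the estimates in Proposition~\ref{prop:derivatives_v} with $C_M \rightsquigarrow C_0$ and the final $|\k|_{L^p(\TT)}$ kept explicit rather than bounded by $M$.
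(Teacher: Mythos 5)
Your proposal is correct and follows essentially the same approach as the paper: the same four-term decomposition $K_1,\dots,K_4$ from Proposition~\ref{prop:derivatives_v}, the same use of Corollary~\ref{corollary:building_blocks}, the improved building-block estimates from Lemma~\ref{lemma:improved_building_blocks}, and the observation that $|\k|_{L^p(\TT)}$ must be kept explicit and linear rather than absorbed into the constant. The paper only writes out $K_1$ and declares the rest "almost identical"; your treatment of $K_2, K_3, K_4$ is precisely what that shorthand means.
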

\begin{proof}
The key is the linear appearance of $| \gamma  |_{W^{2,p}(\TT)}$ in \eqref{eq:Sobolev_K}.
We use the same decomposition as in Proposition \ref{prop:derivatives_v},
\begin{align*}
		 \p^2_s   v(\xi )  & =   \sum_{1\leq i\leq 4}  K_i(\xi )
\end{align*}
with
\begin{align*}
K_1 &  =  -P.V.  \int_{\TT }   \k( \eta )\N ( \eta )    \frac{  (\gamma( \xi )- \gamma(  \eta )) \cdot \T( \xi )}{  |\gamma( \xi )- \gamma(  \eta )|^{2  }}  g(\eta )\, d \eta   \\
K_2 &= \int_{\TT } \T( \eta )  \frac{  (\T( \xi ) - \T(\eta  ))\cdot \T( \xi )   }{ |\gamma(\xi )- \gamma( \eta )|^{2 } }    g(\eta ) \, d\eta\\
K_3 &= -\int_{\TT } \T( \eta )  \frac{  \k( \xi ) (\gamma(\xi )- \gamma(\eta ))\cdot \N( \xi )  \big] }{ |\gamma(\xi )- \gamma( \eta )|^{2 } }    g(\eta ) \, d\eta\\
K_4 &= - 2\int_{\TT } \T(\eta ) \frac{    \big( (  \gamma(\xi)- \gamma( \eta )  ) \cdot \T(\xi) \big) \big( (  \gamma(\xi)- \gamma(\eta )  ) \cdot (\T(\xi) -\T(\eta )) \big)  }{|\gamma(\xi)- \gamma(\eta )|^{4  }}  g(\eta )\, d\eta.
\end{align*}

The proof goes almost identically to Proposition \ref{prop:derivatives_v}, so we only sketch the details for $K_1$.
\begin{align*}
 | K_1 	 |_{L^p(\TT )}   &\leq  \left[   \int_{ \TT }  \left|   \int_{\TT}   \k(\eta  )\N ( \eta)   \frac{  ( \xi  - \eta  )  }{  g(\eta)| \xi - \eta  |^{2   }}  g(\eta ) \, d \eta    \right|^p  \, d\xi  \right]^\frac{1}{p} \\
 & \quad + O\left( C_0  \int_{\TT}  \left|   \int_{ \TT }  | \k(\eta )|  | \xi-   \eta  |^{ \alpha -1   }   \, d \eta    \right|^p  \, d\xi    \right)^\frac{1}{p}.
\end{align*}
By the $L^p$-boundedness of the Hilbert transform and Young's inequality imply that $ | K_1 	 |_{L^p(\TT )}    \leq C_0 |\k |_{L^p(\TT )}$.

Once we have $ | \p^2_s   v 	 |_{L^p(\TT )}    \leq C_0 |\k |_{L^p(\TT )} $, the conclusion follows, since by Proposition \ref{prop:C_1_alpha_regularity} we have uniform control of $|g|_*$
and thus $|\k |_{L^p(\TT )} \leq C_0 | \gamma  |_{W^{2,p}(\TT)} $ for all $t\in [0,T]$.

\end{proof}

\subsection{Global \texorpdfstring{$W^{2,p}$}{W2p} regularity}

With the previous proposition, we obtain the global $W^{2,p}$ regularity for vortex patches.
\begin{theorem}\label{thm:global_sobolev}
Let $1 < p < \infty $. For any $   \g_0   \in X_p$, there is a unique global solution $ \g $ to \eqref{eq:CDE} in $C([0,\infty); X_p) \cap C^{1 }([0,\infty); W^{2,p}(\TT))$ satisfying $\g(0) =\g_0.$
\end{theorem}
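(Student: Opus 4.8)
The strategy is a continuation argument whose crucial input is that the velocity estimate of Proposition~\ref{prop:Global_Sobolev} is \emph{linear} in $|\gamma|_{W^{2,p}}$, with a constant $C_0$ controlled solely by the $C^{1,\alpha}$ norm of the solution --- which by Proposition~\ref{prop:C_1_alpha_regularity} (i.e.\ Chemin's theorem) is bounded uniformly on every finite time interval. Starting from the local $X_p$-solution of Theorem~\ref{thm:localW2p}, I would show that $|\gamma(t)|_{W^{2,p}}$ cannot blow up in finite time, and then extend the solution by time steps of fixed size.

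Concretely, fix $\gamma_0 \in X_p$ and $T_1>0$, and let $[0,T_*)\subseteq[0,T_1]$ be the maximal interval of $X_p$-existence. By Proposition~\ref{prop:C_1_alpha_regularity} this solution coincides with the global $C^{1,\alpha}$ solution, so that $|g|_* + |\T|_{C^\alpha(\TT)} + \Gamma + |\p_s v|_{L^\infty(\TT)} \le C_0$ on $[0,T_1]$, with $C_0 = C_0(\gamma_0,T_1,p)$. Differentiating the integral equation~\eqref{cdeint} twice in $\xi$, and using $\p_\xi = g\,\p_s$, so that $\p_\xi^2\big(v(\gamma)\big) = \dot g\,\p_s v + g^2\,\p_s^2 v$, together with $|\dot g|\le|\ddot\gamma|$, the uniform bounds above, Proposition~\ref{prop:Global_Sobolev}, and the elementary inequality $|\gamma|_{W^{2,p}(\TT)} \le C_0 + |\ddot\gamma|_{L^p(\TT)}$ (the lower-order terms being controlled by the $C^1$ data and conservation of the patch center of mass), one gets
\[
|\ddot\gamma(t)|_{L^p(\TT)} \;\le\; |\ddot\gamma_0|_{L^p(\TT)} + C_0\int_0^t \big(1 + |\ddot\gamma(t')|_{L^p(\TT)}\big)\,dt', \qquad 0\le t<T_*.
\]
Grönwall's inequality then yields $|\ddot\gamma(t)|_{L^p(\TT)} \le \big(|\ddot\gamma_0|_{L^p(\TT)} + C_0 t\big)e^{C_0 t}$, which is bounded on $[0,T_1]$; combined with the uniform control of $|g|_*$ and $\Gamma$, this shows $\gamma(t)$ stays in a fixed ball $B^M_p$ with $M = M(\gamma_0,T_1,p)$ for all $t\in[0,T_*)$.

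If $T_* < T_1$, pick $t_0 < T_*$ with $T_*-t_0$ smaller than the existence time $T(M,p)$ furnished by Proposition~\ref{prop:fix_point} for data in $B^{M/2}_p$ (enlarging $M$ if necessary so that $\gamma(t_0)\in B^{M/2}_p$). Running the fixed-point argument from $\gamma(t_0)$ produces an $X_p$-solution on $[t_0,t_0+T(M,p)]\ni T_*$ which, by uniqueness, coincides with $\gamma$ on $[t_0,T_*)$ and extends it past $T_*$ --- contradicting maximality. Hence $T_*=T_1$, and since $T_1$ was arbitrary, $\gamma\in C([0,\infty);X_p)$; uniqueness is inherited from Theorem~\ref{thm:localW2p} (equivalently from uniqueness of $C^{1,\alpha}$ patch solutions). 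For the $C^1$-in-time statement, the Lipschitz estimates of Propositions~\ref{prop:Lip_v} and~\ref{prop:Lip_ds2v}, together with the continuity of $t\mapsto\gamma(t)$ in $W^{2,p}(\TT)$ and the conversion between $\p_s$ and $\p_\xi$ derivatives, give $t\mapsto v(\gamma(\cdot,t),t) \in C([0,\infty);W^{2,p}(\TT))$, whence $\gamma\in C^1([0,\infty);W^{2,p}(\TT))$ by~\eqref{cdeint}. The only real difficulty --- already settled by the earlier sections --- is precisely the \emph{linear} dependence on $|\gamma|_{W^{2,p}}$ in Proposition~\ref{prop:Global_Sobolev}: a super-linear dependence would close the Grönwall loop only locally in time. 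Everything else is bookkeeping: formulating the continuation criterion so that the local existence time depends only on the ball $B^M_p$, and passing between arc-length and Lagrangian derivatives via the uniform lower bound on $g$.
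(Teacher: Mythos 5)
Your proposal is correct and follows essentially the same approach as the paper: both hinge on the linear dependence on $|\gamma|_{W^{2,p}}$ in Proposition~\ref{prop:Global_Sobolev}, with constants controlled by the global $C^{1,\alpha}$ bounds of Proposition~\ref{prop:C_1_alpha_regularity}, a Gr\"onwall estimate from the integral form~\eqref{cdeint}, and then the local existence of Proposition~\ref{prop:fix_point} to continue past any putative blow-up time; the $C^1$-in-time regularity is obtained from the Lipschitz estimates exactly as in the paper. Your version simply spells out the continuation bookkeeping (maximal interval, explicit restart at $t_0<T_*$) somewhat more formally than the paper's proof.
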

\begin{proof}
Let  $\gamma$ be the unique local solution given by Theorem \ref{thm:localW2p}. Since $ \gamma$  is a global $C^{1,\alpha}$ patch solution for $\alpha = 1 - \frac{1}{p}$
satisfying \eqref{globreg313} for all $t \geq 0$, $ \gamma$ ceases to be a $W^{2,p}$ patch solution if and only if $ |\gamma(t)|_{W^{2,p}(\TT) }     $ blows up at some finite time $T>0$.

We show that  $ \sup_{t\in [0,T)} |\gamma(t)|_{W^{2,p}  (\TT)} \leq C(\g_0,T) <\infty $, arriving at a contradiction due to local $W^{2,p}$ wellposedness. For any $t<T$, we have
\begin{align}\label{eq:global_sobolev_aux1}
 |\gamma(t)|_{W^{2,p}(\TT) } \leq |\g_0 |_{W^{2,p}(\TT) } + \int_0^t |v(\g(\cdot,t),t)|_{W^{2,p} (\TT) }.
\end{align}
By Proposition \ref{prop:Global_Sobolev} and Proposition \ref{prop:C_1_alpha_regularity},
\begin{align}
|v(\g(\cdot,t),t)|_{W^{2,p} (\TT) } & \leq |v|_{L^\infty (\TT)}+ | \nabla v |_{L^{\infty } (\TT) } + | \p_{\xi}^2 v |_{L^{ p} (\TT) } \nonumber \\
& \leq C(\g_0,T)  + C(\g_0,T) | \g (\cdot, t) |_{W^{2, p} (\TT) }\label{eq:global_sobolev_aux2}.
\end{align}
By Gronwall's inequality, it then follow from \eqref{eq:global_sobolev_aux1} and \eqref{eq:global_sobolev_aux2} that
$$
 |\gamma(t)|_{W^{2,p}(\TT) } \leq  C  e^{C t},
$$
and we conclude that $\g$ remains a $W^{2,p}$ patch solution up to $T$.

The above augment shows that $ \g  \in  C([0,\infty  );W^{2,p})$. We now show the regularity $C^{1 }([0,\infty); W^{2,p}(\TT))$ using the integral formulation of $\p_t \g = v(\g)$. By Proposition \ref{prop:Lip_ds2v}, for any $t_1,t_2 \in[0,\infty  )$, we have
$$
 |v(\g(\cdot, t_1),t_1) - v(\g(\cdot, t_2),t_2 )|_{W^{2,p}(\TT)} \lesssim  |\g(\cdot, t_1 ) - \g(\cdot, t_2 ) | \to 0\quad \text{ as $|t_1 -t_2| \to 0$.}
$$
So $v (\g )\in  C ([0,\infty  );W^{2,p})$ and this in turn implies  the regularity $\g \in C^1([0,\infty  );W^{2,p})$.

\end{proof}

\section{The curvature equation and equivalent formulation of patch evolution}\label{sec:curvatureeq}

In this section, we derive the equations for the evolution of geometric quantities such as the tangent vector, arc-length metric, and curvature of the vortex patches according to \eqref{eq:CDE}.
To this end, we first sketch the derivation of these equations for general velocity fields, which is a classical topic in differential geometry, see e.g. \cite{MR1145840,MR1824511}. Then we justify the computation for $W^{2,p}$ vortex patches.

\subsection{Derivation of the curvature equation}
In our derivations in this section, we assume that all occurring objects are sufficiently regular. Later we will make the regularity assumptions more precise in Proposition \ref{prop:equivalence}.

\subsubsection{Evolution of the arc-length}\label{subsec:arclength}

Since the evolution of the curve $\gamma$ is enabled by the velocity $v$, we project the vector field $v$ in \eqref{eq:gamma_curve} to the tangent and normal vectors on the curve $\g$ so that
\begin{align*}
\p_t \gamma & =(  v \cdot \T) \T + (  v \cdot \N) \N\\
&=v_{\tau}\T  + v_{n} \N,
\end{align*}
where $ v_{\tau}$ and $  v_{n} $ are respectively the tangent and normal component of the velocity on the curve $\g$.

To simply the derivation, we denote by $\theta$ the angle between $x_1$-axis and the tangent, namely $\T = ( \cos(\theta ) , \sin(\theta) )$ and $\N = ( \sin(\theta) , -\cos(\theta ) )$. Then the signed curvature $\k$ can be computed by
\begin{equation}\label{eq:def_theta}
\k = - \partial_s \T \cdot \N = \p_s \theta.
\end{equation}

Next, we derive the evolution of the arc-length $g:= |\dot{\gamma}|$. Using \eqref{Neq1222} we obtain
\begin{align*}
\p_t g^2  & = 2 \dot{\gamma} \cdot \p_t \dot{\gamma}= 2 g \T  \cdot  \p_\xi   (v_{\tau} \T + v_n \N)
=  2  g    ( \dot{v}_{\tau}   + v_n \k g).
\end{align*}
Since by definition and \eqref{Neq1222},
$$
\dot{v}_{\tau} = \dot{v} \cdot \T - v_n \k g,
$$
we have
\begin{align*}
\p_t g^2   
=  2  g   \dot{v} \cdot \T = 2g^2 \p_s v  \cdot \T .
\end{align*}
Then we have the evolution of the metric $g:$
\begin{align}\label{eq:evo_metric}
\p_t g  = g \p_s v \cdot \T.
\end{align}

\subsubsection{Evolution of the tangent, normal and curvature}

To derive the curvature equation, we need to make use of the commutation identity:
\begin{align}\label{eq:comm_id}
\p_t \p_s = -\left( \p_s  v \cdot \T \right) \p_s  + \p_s \p_t,
\end{align}
which follows from \eqref{eq:evo_metric} and $\p_s = g^{-1} \p_\xi$ by a routine application of product rule.

Using  \eqref{eq:comm_id}
we obtain the evolution of the tangent $\T$,
\begin{align}\label{eq:evo_tangent}
\p_t \T = -\left( \p_s  v \cdot \T \right) \T + \p_s  v =
\left( \p_s v \cdot \N \right) \N.
\end{align}

Since
\[ 0 = \partial_t (\N \cdot \T) = \partial_t \N \cdot \T + \N \cdot \partial_t \T, \]
for the normal $\N$, we have by \eqref{eq:evo_tangent}
\begin{align}\label{eq:evo_normal}
\p_t \N =  -\left( \p_s v \cdot \N \right)  \T.
\end{align}

Finally, using \eqref{eq:def_theta} and \eqref{eq:comm_id}, we have
\begin{align}
\p_t \k & = \p_t \p_s \theta  = -\left( \p_s v \cdot \T \right) \p_s \theta  + \p_s \left(     - \p_s v \cdot \N \right) \label{aux12221}
\end{align}
which yields the curvature evolution equation
\begin{equation}\label{eq:evo_curvature}
\p_t \k   = -2 \k \p_s v  \cdot \T       - \p^2_s v \cdot \N  .
\end{equation}

\subsection{The curvature equation of vortex patches}

The next lemma can be used to simplify the right-hand side of \eqref{eq:evo_curvature}.
\begin{lemma}\label{lemma:simplification}
For $\g \in X_p $, the following identity holds:
\begin{equation}\label{aux1522}
\p_s v   \cdot \T   =  -\int_{\TT  } \T( \eta  )  \cdot \N( \xi )\frac{ (\gamma( \xi  )- \gamma( \eta  ))\cdot \N( \xi ) }{ |\gamma( \xi  )- \gamma( \eta )|^{2  } }    g(\eta)\, d \eta .
\end{equation}
\end{lemma}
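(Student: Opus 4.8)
The plan is to read off the explicit formula for $\p_s v$ from Proposition \ref{prop:derivatives_v} (applicable since $\g \in X_p$ means $\g \in B^M_p$ for some $M>1$), take its inner product with the tangent vector $\T(\xi)$, and split the resulting integrand using the orthonormal frame $\{\T(\xi),\N(\xi)\}$. By \eqref{eq:d_v_xi},
\[
\p_s v(\g(\xi))\cdot\T(\xi) = \mathrm{P.V.}\int_{\TT} \big[\T(\eta)\cdot\T(\xi)\big]\,\frac{(\g(\xi)-\g(\eta))\cdot\T(\xi)}{|\g(\xi)-\g(\eta)|^{2}}\,g(\eta)\,d\eta ,
\]
and by Corollary \ref{corollary:building_blocks} together with \eqref{eq:building_blocks_a} the integrand behaves like $(\xi-\eta)^{-1}$ near the diagonal, so the principal value is genuinely needed. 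For any vectors $a,b\in\RR^2$ one has $(a\cdot\T(\xi))(b\cdot\T(\xi))+(a\cdot\N(\xi))(b\cdot\N(\xi))=a\cdot b$; applying this with $a=\T(\eta)$, $b=\g(\xi)-\g(\eta)$ gives
\[
\big[\T(\eta)\cdot\T(\xi)\big]\,\frac{(\g(\xi)-\g(\eta))\cdot\T(\xi)}{|\g(\xi)-\g(\eta)|^{2}}\,g(\eta) = \frac{\T(\eta)\cdot(\g(\xi)-\g(\eta))}{|\g(\xi)-\g(\eta)|^{2}}\,g(\eta) - \big[\T(\eta)\cdot\N(\xi)\big]\,\frac{(\g(\xi)-\g(\eta))\cdot\N(\xi)}{|\g(\xi)-\g(\eta)|^{2}}\,g(\eta).
\]
The second term on the right is exactly minus the integrand in \eqref{aux1522}, and by \eqref{eq:building_blocks_c}, \eqref{eq:building_blocks_d} and \eqref{eq:building_blocks_f} it is $O\!\big(C_M|\xi-\eta|^{2\alpha-1}\big)$, hence absolutely integrable on $\TT$ since $\alpha>0$; for that term the principal value coincides with the ordinary integral. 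It therefore suffices to show that the principal value of the first term on the right vanishes.

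For the first term I would use that $\T(\eta)g(\eta)=\dot\g(\eta)$, so that
\[
\frac{\T(\eta)\cdot(\g(\xi)-\g(\eta))}{|\g(\xi)-\g(\eta)|^{2}}\,g(\eta) = \frac{\dot\g(\eta)\cdot(\g(\xi)-\g(\eta))}{|\g(\xi)-\g(\eta)|^{2}} = -\tfrac{1}{2}\,\p_\eta\ln|\g(\xi)-\g(\eta)|^{2} = -\p_\eta\ln|\g(\xi)-\g(\eta)| ,
\]
i.e. the integrand is a total $\eta$-derivative. Integrating over $\{\eta\in\TT:\ |\xi-\eta|\ge\ep\}$ and using periodicity, the only surviving contributions are the two boundary terms $\ln|\g(\xi)-\g(\xi\pm\ep)|$, and their difference tends to $0$ as $\ep\to0$; this is precisely the computation behind \eqref{eq:d_v_aux1} in the proof of Proposition \ref{prop:derivatives_v} specialized to $\varphi\equiv1$, where $\g\in B^M_p$ and the embedding $W^{2,p}\subset C^{1,\alpha}$ give $|\g(\xi)-\g(\xi\pm\ep)| = g(\xi)\ep + O(\ep^{1+\alpha})$, whence the ratio of the two logarithms converges to $1$. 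Combining the two computations yields \eqref{aux1522}.

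Essentially all the content is algebraic rearrangement plus the building-block bounds of Lemma \ref{lemma:building_blocks}; the only delicate point — and thus the main obstacle — is the justification of the cancellation of the principal value of the total-derivative term, but since this is identical to the boundary-term estimate already carried out for \eqref{eq:d_v_aux1}, it requires no new argument. One should only be mindful that the identity \eqref{aux1522} is to be understood at points $\xi$ where $\p_s v$ exists (a.e.), the right-hand side being a continuous function of $\xi$.
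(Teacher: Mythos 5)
Your proof is correct, and it takes a genuinely different route than the paper's. The paper works in arc-length, uses the identity $\T(s)\cdot\T(s')=\N(s)\cdot\N(s')$, and then applies the divergence theorem to convert \emph{both} sides of \eqref{aux1522} into the same $2$D integral over the interior domain $\Omega$, namely $-2\,\mathrm{P.V.}\!\int_\Omega \frac{(\g(s)-y)\cdot\N(s)\,(\g(s)-y)\cdot\T(s)}{|\g(s)-y|^4}\,dy$. You instead stay entirely on the boundary: you decompose $\T(\eta)\cdot\T(\xi)\,(\g(\xi)-\g(\eta))\cdot\T(\xi)$ in the orthonormal frame $\{\T(\xi),\N(\xi)\}$ at the base point, recognize one summand as a total $\eta$-derivative of the Newtonian potential $\ln|\g(\xi)-\g(\eta)|$ whose principal value vanishes thanks to the arc-chord/H\"older estimates (exactly the mechanism of \eqref{eq:d_v_aux1}), and observe the other summand is the right-hand side of \eqref{aux1522} and has an absolutely convergent integrand by \eqref{eq:building_blocks_c}, \eqref{eq:building_blocks_d}, \eqref{eq:building_blocks_f}. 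The paper's approach has the virtue that it never needs to discuss why a principal-value integral vanishes and it exposes the symmetric $2$D structure of $\p_s v\cdot\T$, but it leans (lightly) on justifying the divergence theorem with a boundary singularity; your approach is intrinsically one-dimensional and all its ingredients (the frame identity, the logarithmic-derivative cancellation, the building-block bounds) are already established quantitatively in Proposition \ref{prop:derivatives_v} and Lemma \ref{lemma:building_blocks}, so nothing new needs to be "rigorously justified." Both are valid; yours is arguably the more self-contained given what precedes the lemma.
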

\begin{proof}
Denote by $\Omega$ the domain with $\g$ as the boundary. In this proof, we write everything in arc-length parametrization. Recall \eqref{eq:d_v_xi} and note that $\T(s) \cdot \T(s') = \N(s) \cdot \N(s').$
We apply the divergence theorem (not difficult to justify rigorously) to obtain
\begin{align}
\p_s v   \cdot \T  &= P.V. \int_{\gamma } \N (s) \cdot \N ( s' )  \frac{   (\gamma(s)-\gamma( s' )) \cdot \T(s) }{  |\gamma(s) - \gamma(s' )|^{2 }} \,d s' \nonumber \\
 & =  P.V. \int_{\Omega} \D_y \left( \N (s)  \frac{   (\gamma(s)- y ) \cdot \T(s) }{  |\gamma(s) - y|^{2 }}  \right) \,dy \nonumber \\
&  = -2 P.V. \int_{\Omega}     \frac{ (\gamma(s)- y ) \cdot \N(s)   (\gamma(s)- y ) \cdot \T(s) }{  |\gamma(s) - y|^{4  }} \label{dvT2d}   \,dy .
\end{align}

On the other hand, the right-hand side of \eqref{aux1522} is equal to
\begin{align*}
    \int_{\gamma  } \N( s' )  \cdot \T(s )\frac{ (\gamma(s )- \gamma( s'  ))\cdot \N(s )  }{ |\gamma( s )- \gamma( s' )|^{2  } }    \, d s' ,
\end{align*}
which after integration by parts is equal to
\begin{align*}
-2 P.V. \int_{\Omega}     \frac{ (\gamma(s)- y ) \cdot \N(s)   (\gamma(s)- y ) \cdot \T(s) }{  |\gamma(s) - y|^{4 }}    \,dy .
\end{align*}
\end{proof}

Based on the previous derivations \eqref{eq:evo_metric} and \eqref{eq:evo_curvature} and the integrals for derivatives $\p_s v $ and $\p_s^2 v$ in Proposition \ref{prop:derivatives_v}, we introduce the system for arc-length $g$ and curvature $\k$ evolution equations: 
\begin{equation}\label{eq:curvature_and_metric}
\begin{cases}
\p_t \k   = K(g, \k ) &\\
\p_t g   = G(g, \k )&
\end{cases}
\end{equation}
where $ K(g, \k )$ and $G(g, \k)$ are the nonlinear functionals given by (in part due to Lemma \ref{lemma:simplification})
\begin{equation}\label{eq:def_K_term}
\begin{aligned}
K(g, \k ) =      -   \k(\xi )\p_s v \cdot \T (\xi ) + P.V. \int_{\TT }   \k(\eta)\N (\eta)  \cdot \N(\xi) \frac{  (\gamma(\xi)- \gamma(\eta)) \cdot \T(\xi)}{  |\gamma(\xi)- \gamma(\eta)|^{2   }} \, g(\eta) d\eta\\
	   -\int_{\TT } \T(\eta)  \cdot \N(\xi)\frac{ \big[ (\T(\xi ) - \T(\eta))\cdot \T( \xi)  \big] }{ |\gamma(\xi)- \gamma(\eta)|^{2  } }    \, g(\eta) d\eta\\
		   +  2  \int_{\TT } \T(\eta)\cdot \N(\xi) \frac{    \big( (  \gamma(\xi)- \gamma(\eta)  ) \cdot \T(\xi) \big) \big( (  \gamma(\xi)- \gamma(\eta)  ) \cdot (\T(\xi) -\T(\eta)) \big)  }{|\gamma(\xi)- \gamma(\eta)|^{4  }} \, g(\eta) d\eta
\end{aligned}
\end{equation}
and
\begin{equation}\label{eq:def_G_term}
G(g, \k ) = g(\xi) \p_s v \cdot \T(\xi).
\end{equation}

We remark that the nonlinear functionals $K(g,\k)$ and $G(g, \k)$ are well-defined in the sense that their values can be computed using only arc-length $g$ and curvature $\k$ of the curve $\gamma.$
In particular, they do not depend on the orientation or location of the patch.

\subsection{Equivalence of the arc-length/curvature system and vortex patch evolution}

In this subsection, we show that a solution $ (g, \k ) \in C([0,T];W^{1,p}(\TT)\times L^p(\TT))$ of  \eqref{eq:curvature_and_metric} corresponds to the unique  Euler patch solution $\g  \in C([0,T];W^{2,p}(\TT) ) $ of \eqref{eq:CDE} and vice versa.

\begin{proposition}\label{prop:equivalence}
Let $\g_0 \in  W^{2,p}(\TT) $ be a proper parametrization of a simple closed curve and let $ (g_0, \k_0 ) \in  W^{1,p}(\TT)\times L^p(\TT))$ be the corresponding  arc-length and curvature. Then the following statements are true.

\begin{enumerate}
\item  If $\g \in C([0,T];X_{p} )$ is a solution of \eqref{eq:CDE} with initial data $\g_0$, then its arc-length $g$ and curvature $\k$ of $\g $ must satisfy the equations \eqref{eq:curvature_and_metric} with initial data $g_0$ and $\k_0$.\label{item:gamma_CDE}

\item If $ (g, \k ) \in C([0,T];W^{1,p}(\TT)\times L^p(\TT))$ as the arc-length and curvature defines a simple closed curve and satisfies the equations \eqref{eq:curvature_and_metric} with initial data $g_0$ and $\k_0$, then there exist a solution $\g \in C([0,T];W^{2,p}(\TT))$ of \eqref{eq:CDE} with initial data $\g_0$ such that $(g, \k )$ is the arc-length and curvature of $\g$.\label{item:gk_curvature}
\end{enumerate}
\end{proposition}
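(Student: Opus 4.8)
The plan is to prove the two directions separately. For \eqref{item:gamma_CDE}, I would start from a solution $\g \in C([0,T];X_p)$ of \eqref{eq:CDE}. By Theorem \ref{thm:global_sobolev} (or rather the local theory together with Proposition \ref{prop:derivatives_v}) we know $\g \in C^1([0,T];W^{2,p}(\TT))$ and $v(\g) \in C([0,T];W^{2,p}(\TT))$, so $\p_s v$ and $\p_s^2 v$ are genuine $W^{1,p}$ and $L^p$ functions given by the explicit formulas \eqref{eq:d_v_xi}--\eqref{eq:d2_v_xi}. The first task is to justify that the formal geometric computations of Section \ref{sec:curvatureeq} are valid at this regularity: namely that $g = |\dot\g|$ satisfies \eqref{eq:evo_metric}, that the commutation identity \eqref{eq:comm_id} holds in the distributional sense, and that $\k$ satisfies \eqref{eq:evo_curvature}. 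The metric equation is easy since $\p_t g = \dot\g \cdot \p_t\dot\g / g = g\,\p_s v\cdot\T$ directly from $\p_t\g = v$; the commutation identity then follows from $\p_s = g^{-1}\p_\xi$ and the chain rule, all terms making sense because $\p_t g \in C(W^{1,p})$ and $1/g$ is bounded below. For the curvature equation I would differentiate the angle $\theta$ (well-defined locally, or work with $\T,\N$ directly via \eqref{Neq1222}) and use \eqref{eq:evo_tangent}, \eqref{eq:evo_normal}; the only subtlety is that $\p_s^2 v\cdot\N$ appears, which is merely $L^p$, so \eqref{eq:evo_curvature} holds as an identity in $C([0,T];L^p(\TT))$. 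Finally, inserting the integral formulas \eqref{eq:d_v_xi}, \eqref{eq:d2_v_xi} for $\p_s v\cdot\T$ and $\p_s^2 v\cdot\N$ into \eqref{eq:evo_metric}, \eqref{eq:evo_curvature} and using Lemma \ref{lemma:simplification} to rewrite $\p_s v\cdot\T$ yields precisely the system \eqref{eq:curvature_and_metric} with functionals $K,G$ as in \eqref{eq:def_K_term}, \eqref{eq:def_G_term}; this is a bookkeeping computation of which components of $\p_s^2 v$ are tangential versus normal.

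For \eqref{item:gk_curvature}, I would go the other way: given $(g,\k)\in C([0,T];W^{1,p}\times L^p)$ solving \eqref{eq:curvature_and_metric} and defining (at each time) a simple closed curve, I reconstruct the curve. Fix the reference point and tangent angle of $\g_0$: set $\theta(s,t) = \theta_0 + \int_0^s \k(s',t)\,ds'$ (using the arc-length variable determined by $g$), define $\T = (\cos\theta,\sin\theta)$, $\N = -\T^\perp$, and $\g(\xi,t) = \g_0^{\mathrm{base}}(t) + \int_0^\xi \T(\eta,t) g(\eta,t)\,d\eta$, where the base point $\g_0^{\mathrm{base}}(t)$ is chosen so that $\p_t\g_0^{\mathrm{base}} = v(\g(0,t),t)$. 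One then checks that this $\g$ is a closed curve (the closure condition $\int_0^{2\pi}\T g\,d\eta = 0$ must be propagated in time — this follows because the system \eqref{eq:curvature_and_metric} was derived exactly so that $\p_t(\T g) = \p_\xi(\text{velocity})$, so the integral of $\p_t(\T g)$ over the period telescopes to zero), that $\g \in C([0,T];W^{2,p}(\TT))$ with the prescribed arc-length and curvature, and that $\p_t\g = v(\g)$, i.e. it solves \eqref{eq:CDE}. The verification that $\p_t\g = v(\g)$ is the crux: differentiating the reconstruction formula in $t$ gives an expression involving $\p_t\theta$ and $\p_t g$, into which one substitutes the equations for $\k$ and $g$; one must recognize the result as the $\xi$-antiderivative of $\p_\xi v(\g)$, which matches $v(\g)$ up to the correctly chosen base point. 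Uniqueness of the reconstructed $\g$ with given geometry, position and orientation follows because $\theta$ and the base point are determined by ODEs.

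The main obstacle I expect is the rigorous justification of the geometric identities at $W^{2,p}$ regularity in direction \eqref{item:gamma_CDE} — in particular making sense of \eqref{eq:evo_curvature} when $\p_s^2 v$ is only $L^p$ and $\k$ is only $L^p$, so that products like $\k\,\p_s v\cdot\T$ require $\p_s v\cdot\T\in L^\infty$ (which holds by \eqref{eq:eq:d_v_xi_aux1}) — and the analogous consistency check in direction \eqref{item:gk_curvature} that the reconstructed curve stays closed and its time derivative equals the Biot--Savart velocity. Both hinge on the fact, visible already in Proposition \ref{prop:derivatives_v} and Lemma \ref{lemma:simplification}, that the ``dangerous'' $L^p$-only terms in $\p_s^2 v$ are exactly the normal ones entering \eqref{eq:evo_curvature}, while the tangential combination $\p_s v\cdot\T$ entering \eqref{eq:evo_metric} enjoys the better $L^\infty$ bound; so the structure of the equations is compatible with the regularity class. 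I would also remark that once equivalence is established, uniqueness for \eqref{eq:curvature_and_metric} follows from uniqueness for \eqref{eq:CDE} (Theorem \ref{thm:localW2p}, Theorem \ref{thm:global_sobolev}), and conversely the curvature formulation becomes a legitimate alternative Cauchy problem for the patch, which is what Section \ref{sec:illposedness} exploits.
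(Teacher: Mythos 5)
Your proposal is correct and follows essentially the same route as the paper: direction \eqref{item:gamma_CDE} is justified by upgrading to $\g \in C^1([0,T];W^{2,p})$ via Theorem \ref{thm:global_sobolev} so that the formal computations of Section \ref{sec:curvatureeq} become rigorous, and direction \eqref{item:gk_curvature} reconstructs $\T$ and $\g$ from $(g,\k)$ via the angle $\theta$ and fixes $\theta(0,t)$ and $\g(0,t)$ by ODEs before verifying $\p_t\g = v(\g)$. One small remark: the closure condition you worry about propagating is already part of the hypothesis of item \eqref{item:gk_curvature} (that $(g,\k)$ ``defines a simple closed curve'' at each $t$), so the paper does not need to re-derive it.
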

Before proving the proposition, we remark that the condition of $(g, \k)$ defining a closed curve in at each time $t\in [0,T]$ is equivalent to the conditions
$$
\int_{\TT} (\cos( \theta(\xi)) ,  \sin( \theta(\xi))) g (\xi) \, d \xi  =0
$$ with $\theta(\xi) = \int_0^\xi \k(\eta) g(\eta) \, d\eta $ and $\int_\TT \k(\xi) g (\xi)\, d \xi = 2\pi$ for every $t\in [0,T]$.

\begin{proof}

\noindent
\eqref{item:gamma_CDE}  $\Rightarrow$ \eqref{item:gk_curvature}:

Since  $\g \in C([0,T];X_{p})$, it satisfies the assumptions of Proposition \ref{prop:derivatives_v} on $[0,T]$ for a sufficiently large $M$. By Theorem \ref{thm:global_sobolev} we also have $\g \in C^1([0,T];W^{2,p})$.

This $C^1([0,T];W^{2,p})$ regularity of $\g$ implies the regularity $g   \in C^1([0,T];W^{1,p})$, $\T    \in C^1([0,T];W^{1,p})$ and $\k \in C^1([0,T];L^{p})$ thanks to the fact that $g =|\dot{\g}| >0$ uniformly on $\TT\times [0,T]$. Such regularity in turn allows us to derive the arc-length equation \eqref{eq:evo_metric} using the (now rigorous) computation in \eqref{subsec:arclength}. The derivations of the evolution of tangent \eqref{eq:evo_tangent} and curvature \eqref{aux12221} are also justified following \eqref{eq:comm_id}.

\noindent\eqref{item:gk_curvature} $\Rightarrow$ \eqref{item:gamma_CDE}:

Let us define the tangent vectors $\T(\xi,t)$
\begin{equation}\label{tangent1522}
\T(\xi,t)=(\cos \theta(\xi,t), \sin \theta(\xi,t)), \,\,\, 
\theta (\xi, t) = \theta(0,t) + \int_{0}^{\xi } \k(\eta,t) g(\eta,t) \, d\eta,
\end{equation}
and the curve
\begin{equation}\label{eq:modulated_gamma}
\gamma(\xi,t) = \gamma(0,t)+ \int_{0}^{\xi } \T(\eta,t) g(\eta,t) \, d\eta,
\end{equation}
with $\theta(0,t) $ and $\gamma(0,t)$ to be determined. By the assumptions on $(g,\k)$, $\g $ is a well-defined simple closed curve, and $\g  \in C([0,T];W^{2,p}(\TT))$ provided $\theta(0,t)  ,\gamma(0,t) \in C([0,T])$. Therefore, we only need to show how to determine  $\theta(0,t) $ and $\gamma(0,t)$ so that $\g$ solves \eqref{eq:CDE}.

Thanks to the invariance of translation and rotation in the definition of nonlinear functionals $K(g,\k)$ and $G(g,\k)$, we can use \eqref{tangent1522} and \eqref{eq:modulated_gamma} to evaluate the equations \eqref{eq:def_K_term} and \eqref{eq:def_G_term}. In particular, we know that the quantities $\p_s^2 v\cdot \N$, $\p_s v\cdot \T$  are known functions on $\TT\times[0,T]$ given by $g$ and $\k$, but $\p_s v$ and $v$ are to be determined along with $\T$, $\N$, and $\g$.

Differentiating \eqref{tangent1522} in time, we find that
\begin{equation}\label{aux11522}
 \p_t \T(\xi,t) = -\left( \int_0^\xi \p_t (\k g)\,d\eta +\p_t \theta(0,t) \right) \T^\perp (\xi,t).
\end{equation}
Since  $(g,\k)$ is a solution of \eqref{eq:curvature_and_metric}
\begin{equation}\label{gk1522}
\p_t(\k g) = -\p_s (\p_s v \cdot \N) g .
\end{equation}
Using \eqref{gk1522}, we find that the expression in the bracket in \eqref{aux11522} is equal to
\[ -(\p_s v \cdot \N) (\xi,t) + (\p_s v \cdot \N) (0,t) +\p_t \theta(0,t). \]
Let us now define $\theta(0,t)$ to be the solution of
\begin{equation}\label{eq:theta_ODE}
\begin{cases}
 \p_t \theta(0,t) = - (\p_s v \cdot \N) (0,t),& \\
\theta(0,0)= \theta_0(0),
\end{cases}
\end{equation}
where $\theta_0 (\xi)$ the initial angle is computed using the initial data $\g_0$. Note that the right-hand side of \eqref{eq:theta_ODE} is a known function in terms of $g$ and $\k$  since   $\p_s v \cdot \N$ only involves relative angles and relative distance by \eqref{tangent1522} and \eqref{eq:modulated_gamma}.
Then by \eqref{tangent1522} and \eqref{aux11522} we arrive at
\begin{equation}\label{aux21522}
 \p_t \T(\xi,t) =  -(\p_s v \cdot \N) (\xi,t)  \T^\perp (\xi,t).
\end{equation}
Now that we have determined the tangent $ \T$ and normal $\N$, we have
\begin{align}\label{eq:modulated_gamma_dv}
\p_\xi v = g (\p_s v \cdot \T  ) \T +  g (\p_s v \cdot \N  ) \N
\end{align}
is also determined. Furthermore, since $v = \int_{\TT} \T(\eta) g(\eta) \ln|\g (\xi) - \g(\eta)| \, d\eta $ does not depend on $\g(0,t)$ in \eqref{eq:modulated_gamma}, we have fully determined the velocity $v$ on $\g$ as a function $\TT \times [0,T] \to \RR^2$.

Next, we will determine the curve $\g$ by solving for $ \g(0,t)$, and show that $\g$ is a solution of \eqref{eq:CDE}. Let us differentiate \eqref{eq:modulated_gamma} in time. We obtain
\[ \p_t \g(\xi,t) = \p_t \gamma(0,t) + \int_0^\xi \p_t (\T(\eta,t) g(\eta,t))\,d\eta. \]
Using \eqref{aux21522}, \eqref{eq:evo_metric}, and $\N= -\T^\perp,$ we find that the expression under the integral is equal to
\[ -(\p_s v \cdot N) \T^\perp g + \T g (\p_s v \cdot \T)= (\p_\xi v \cdot \T)\T + (\p_\xi v \cdot \N) \N = \p_\xi v. \]
Therefore,  $\g$ satisfies the equation
\[ \p_t \g(\xi,t) = v(\g(\xi,t),t) - v(\g(0,t),t) + \p_t \gamma(0,t). \]
Define $\g(0,t)$ by solving $\p_t \g(0,t) = v(\g(0,t),t),$
then we see that $\gamma(\xi,t)$ satisfies the Euler patch equation \eqref{eq:CDE}.

\end{proof}

\section{Illposedness of \texorpdfstring{$C^2$}{C2} patches}\label{sec:illposedness}

In this section, we will prove the main illposedness results based on the curvature equation. The proof goes in several steps.
\begin{enumerate}
    \item The first step is to rewrite the curvature equation \eqref{eq:evo_curvature} into \eqref{eq:evo_curvature_illposedness} by isolating the linear dispersion effect.

    \item We then show that in the $W^{2,p}$ setting, namely when $\g \in X_p$ for $p$ large, the linear term in \eqref{eq:evo_curvature_illposedness} is the dominant term.

    \item The last step is to use Duhamel's principle and pick initial data to show the evolution group induces the norm inflation $|\k|_{L^p (\TT)} \to \infty $ as $p \to \infty$ over a fixed time interval $[0,T]$ that is independent of $p$.
\end{enumerate}

\subsection{Reformulation of the curvature equation}
We start by recasting the curvature equation. Denote by $\mathcal{H}$ the Hilbert transform on $\TT$, namely
$$
\mathcal{H} f(\xi ) = \frac{1}{2\pi} P.V. \int_{\TT}  f(\eta )\cot\left(\frac{\xi - \eta }{2}\right) \, d\eta.
$$
Note that the kernel is obtained by periodizing the Hilbert transform on the real line,
\begin{align}
\frac{1}{2\pi} \cot \left(\frac{x}{2} \right)  = \frac{1}{\pi x}+ \frac{1}{\pi  }\sum_{n \geq 1} \frac{1}{x+ 2\pi n}+ \frac{1}{x- 2\pi n}.
\end{align}
It is classical~\cite{MR69310} (see also e.g. \cite{MR3052498} for an easy reference)
that the periodic Hilbert transform is bounded on $L^p(\TT)$ for $1<p <\infty$ and on $C^{\alpha}(\TT)$ for $0<\alpha<1$. Here we recall the periodic H\"older space $C^{\alpha}(\TT)$ is equipped with the norm
\begin{equation}\label{eq:def_holder_norm}
| f |_{C^{\alpha} (\TT)} = |f|_{L^\infty(\TT)} + \sup_{\xi \neq \eta} \frac{|f(\xi)  - f(\eta) |}{|\xi -\eta |}.
\end{equation}

We will now further analyze the curvature equation \eqref{eq:curvature_and_metric}, \eqref{eq:def_K_term} to obtain the following

\begin{theorem}\label{curveq9122}
Suppose that $ \g  \in C([0,T]; B^M_p),$ $p>2,$ and set $\alpha = 1 -\frac1p,$ $\beta = 1- \frac2p.$
 Then on the time interval $[0,T]$, the curvature $\k$ satisfies
\begin{align}\label{eq:evo_curvature_illposedness}
\frac{ \p \k }{ \p t}  = a(\xi,t)\k +  \pi\mathcal{H} (\k)(\xi,t) + F(\xi,t),
\end{align}
where $a \in C^\alpha(\TT)$ and $F \in C^\beta(\TT)$ uniformly in $t \in [0,T]$ with norms depending on $T$ and $M$.
\end{theorem}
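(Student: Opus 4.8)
The plan is to start not from \eqref{eq:CDE} directly but from the curvature equation in the already-derived form \eqref{eq:curvature_and_metric}--\eqref{eq:def_K_term}. This is legitimate here: since $\gamma\in C([0,T];B^M_p)$ solves \eqref{eq:CDE}, Proposition \ref{prop:equivalence} together with Theorem \ref{thm:global_sobolev} gives $\gamma\in C^1([0,T];W^{2,p})$ and $\kappa\in C^1([0,T];L^p)$, so $\partial_t\kappa=K(g,\kappa)$ holds in $L^p$ at each $t$. I would then read the three pieces of \eqref{eq:evo_curvature_illposedness} off the four terms of \eqref{eq:def_K_term}: the local term $-\kappa(\xi)\,\partial_s v\cdot\T(\xi)$, the singular integral
\[
\mathcal{I}(\xi) := P.V. \int_\TT \kappa(\eta)\,\big(\N(\eta)\cdot\N(\xi)\big)\frac{(\gamma(\xi)-\gamma(\eta))\cdot\T(\xi)}{|\gamma(\xi)-\gamma(\eta)|^2}\,g(\eta)\,d\eta ,
\]
and the remaining two integrals $K_3,K_4$. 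Setting $a:=-\partial_s v\cdot\T$ puts the first term into the advertised form $a\kappa$, and since Proposition \ref{prop:derivatives_v} gives $v\in W^{2,p}(\TT)$ --- hence $\partial_\xi v\in W^{1,p}(\TT)\hookrightarrow C^\alpha(\TT)$ --- while $g,1/g,\T\in C^\alpha(\TT)$ with norms controlled by $M$, I conclude $\partial_s v=g^{-1}\partial_\xi v\in C^\alpha$ and therefore $a\in C^\alpha(\TT)$ uniformly in $t$.

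The heart of the matter is extracting $\pi\mathcal{H}\kappa$ from $\mathcal{I}$. Using Corollary \ref{corollary:building_blocks} with the admissible choice $\zeta=\xi$, together with $\N(\eta)\cdot\N(\xi)=\T(\eta)\cdot\T(\xi)=1+O(C_M|\xi-\eta|^{2\alpha})$ from \eqref{eq:building_blocks_a} and the H\"older continuity of $g$, one expands
\[
\big(\N(\eta)\cdot\N(\xi)\big)\frac{(\gamma(\xi)-\gamma(\eta))\cdot\T(\xi)}{|\gamma(\xi)-\gamma(\eta)|^2}\,g(\eta)=\frac{1}{\xi-\eta}+O\big(C_M|\xi-\eta|^{\alpha-1}\big),
\]
the coefficient of the principal singularity being \emph{exactly} $1$; this is precisely the source of the constant $\pi$ in front of $\mathcal{H}$, since $\tfrac{1}{2\pi}\cot(\tfrac x2)-\tfrac1{\pi x}$ extends to a smooth function on $\TT$. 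Absorbing that bounded periodization correction of the Hilbert kernel into the remainder, one gets $\mathcal{I}=\pi\mathcal{H}\kappa+R$ with $R(\xi)=\int_\TT\kappa(\eta)\,\Phi(\xi,\eta)\,d\eta$ and $|\Phi(\xi,\eta)|\le C_M|\xi-\eta|^{\alpha-1}$. What then remains is to show that $R$, $K_3$ and $K_4$ all lie in $C^\beta(\TT)$ uniformly in $t$, where $\beta=2\alpha-1=1-\tfrac2p$.

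This last step is where the hypothesis $p>2$ is used at every turn. For the sup bound, H\"older's inequality with conjugate exponent $p'=p/(p-1)$ gives $|R(\xi)|\le\|\kappa\|_{L^p}\|\Phi(\xi,\cdot)\|_{L^{p'}}$, and $\|\Phi(\xi,\cdot)\|_{L^{p'}}<\infty$ precisely because $(1-\alpha)p'<1\Leftrightarrow p>2$; for $K_3,K_4$ the building-block estimates of Lemma \ref{lemma:building_blocks} --- e.g.\ \eqref{eq:building_blocks_c}, \eqref{eq:building_blocks_d}, \eqref{eq:building_blocks_M_d}, \eqref{eq:building_blocks_M_e} --- leave integrands of size $O(C_M|\xi-\eta|^{2\alpha-2})$, again integrable iff $p>2$ (note $K_3,K_4$ are genuinely quadratic in $\kappa$ through the tangent and position differences, which is harmless since $\|\kappa\|_{L^p}\le C_M$). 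For the $C^\beta$ seminorm I would split the $\eta$-integral at scale $|\xi_1-\xi_2|$: the near part is bounded by the above $L^{p'}$ estimate restricted to $\{|\xi-\eta|\lesssim|\xi_1-\xi_2|\}$ and contributes $|\xi_1-\xi_2|^{\alpha-1/p}=|\xi_1-\xi_2|^\beta$, while the far part uses off-diagonal kernel regularity --- $|\partial_\xi\Phi(\xi,\eta)|\le C_M|\xi-\eta|^{\alpha-2}$ and its analogues for $K_3,K_4$, obtained by differentiating the explicit integrands and invoking the uniform $C^{1,\alpha}$ bounds on $\gamma,\T,g$ from Proposition \ref{prop:C_1_alpha_regularity} --- and the exponent bookkeeping again lands on $|\xi_1-\xi_2|^\beta$. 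Summing, $F:=R+K_3+K_4\in C^\beta(\TT)$ with norm depending only on $M$ and $T$, and \eqref{eq:evo_curvature_illposedness} follows. I expect the genuine obstacle to be exactly this: verifying the off-diagonal estimates for the fairly involved kernels in $K_3$ and $K_4$, and tracking all the Hölder exponents carefully enough to land on $\beta=1-\tfrac2p$ rather than something worse --- with everything uniform as $t$ ranges over $[0,T]$.
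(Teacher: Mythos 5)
Your overall plan mirrors the paper's: write $\partial_t\kappa = K(g,\kappa)$, set $a=-\partial_s v\cdot\T$, peel $\pi\mathcal{H}\kappa$ out of the principal-value integral using Corollary~\ref{corollary:building_blocks} and \eqref{eq:building_blocks_a}, and then show the remainder plus the two genuinely nonlinear integrals lie in $C^\beta$ by a near/far split at scale $|\xi_1-\xi_2|$. That is the same decomposition the paper uses ($F=F_L+F_N$, with $F_L$ estimated in Proposition~\ref{prop:F_L_holder} and $F_N$ in Proposition~\ref{prop:F_holder}). The $L^\infty$ bounds, and the near-part bookkeeping $\|\Phi(\xi,\cdot)\|_{L^{p'}(\{|\xi-\eta|\lesssim\delta\})}\lesssim\delta^{\alpha-1/p}=\delta^\beta$, are exactly right.

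There is, however, a genuine gap in your far-part estimate. You assert $|\partial_\xi\Phi(\xi,\eta)|\leq C_M|\xi-\eta|^{\alpha-2}$ (and analogous bounds for the $K_3,K_4$ kernels), claiming these follow ``by differentiating the explicit integrands and invoking the uniform $C^{1,\alpha}$ bounds on $\gamma,\T,g$.'' That is not correct: the $\xi$-dependence of the kernel sits inside $\T(\xi)$, $\N(\xi)$ and $\gamma(\xi)$, and $\partial_\xi\T(\xi)=-\kappa(\xi)g(\xi)\N(\xi)$, $\partial_\xi\N(\xi)=\kappa(\xi)g(\xi)\T(\xi)$ are merely $L^p$, not bounded --- $C^{1,\alpha}$ control of $\gamma$ does not let you uniformly bound the kernel's $\xi$-derivative. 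The correct bound for $\partial_\xi Q_L$ is $O(C_M|\xi-\eta|^{\alpha-2})+\kappa(\xi)\,O(C_M|\xi-\eta|^{\alpha-1})$ (see the paper's \eqref{eq:F_L_derivative_factor_4}), and for $\partial_\xi R_N$ one picks up factors of $|\kappa(\xi)|+\mathcal{M}\kappa(\xi)$ (see \eqref{eq:F_N_aux3}). Because of this, one cannot bound the far-part difference $|\Phi(\xi_1,\eta)-\Phi(\xi_2,\eta)|$ by $|\xi_1-\xi_2|\sup|\partial_\xi\Phi|$; one instead applies the fundamental theorem of calculus, integrates the $\kappa(\xi')$ factor over $[\xi_1,\xi_2]$, and uses H\"older (or the maximal function) to produce an extra $\delta^\alpha$. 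Since $\alpha\geq\beta$, this still closes at $\delta^\beta$, so your conclusion stands once this step is repaired --- but as written the key intermediate estimate is false, and the justification offered for it is precisely the thing that fails.
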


Here $ a = -      \p_s    v   \cdot \T (\xi,t )$ and we will split $F := F_L + F_N$ as shown below. 
The two error terms $F_L$ and $F_N$ are defined respectively by
\begin{equation}\label{eq:def_linear_error}
\begin{aligned}
F_L(\xi ) := -\pi\mathcal{H} (\k) + P.V. \int_{ \TT }   \k(\eta)\N (\eta)  \cdot \N(\xi) \frac{  (\gamma(\xi)- \gamma(\eta)) \cdot \T(\xi)}{  |\gamma(\xi)- \gamma(\eta)|^{2   }} \, g(\eta) d\eta
\end{aligned}
\end{equation}
and
\begin{multline}\label{eq:def_nonlinear_error}
F_N (\xi )  =  -\int_{\TT } \T(\eta)  \cdot \N(\xi)\frac{ \big[ (\T(\xi ) - \T(\eta))\cdot \T( \xi)   \big] }{ |\gamma(\xi)- \gamma(\eta)|^{2  } }    \, g(\eta) d\eta\\
		  +  2  \int_{\TT }  \T(\eta )\cdot \N(\xi) \big( (  \gamma(\xi)- \gamma(\eta)  ) \cdot \T(\xi) \big) \times \\ \frac{    \big( (  \gamma(\xi)- \gamma(\eta)  ) \cdot (\T(\xi) -\T(\eta)) \big)  }{|\gamma(\xi)- \gamma(\eta)|^{4  }} \, g(\eta) d\eta.
\end{multline}

The driving mechanism of $C^{2}$ illposedness is the dispersion of the Hilbert transform in \eqref{eq:evo_curvature_illposedness}. In fact, since $\mathcal{H}^2 = -\Id $, one has the following formula
\begin{equation}\label{eq:etH}
e^{t \pi\mathcal{H}}  = \sum_n \frac{(t \pi\mathcal{H})^n}{n !}  =  \cos(\pi t ) \Id  +\sin(\pi t ) \mathcal{H} .
\end{equation}
To exploit the above dispersion of the Hilbert transform, we first need to establish suitable estimates for $a$ and $F$. Then by a simple application of the Duhamel formula (namely \eqref{eq:Duhamel_for_K} below), we can establish norm inflation for a suitable new variable that implies $C^2$ illposedness.

\subsection{H\"older estimates of coefficients }
We first show that $ F$ is H\"older if $p> 2$ with a H\"older exponent $ \beta =1 -\frac{2}{p}   $.

Before we proceed, recall that $C_M$ denotes a positive constant depending only on $M, p$ that may change from line to line and the big O notation $X = O(Y)$ for a quantity $X$ such that $|X| \leq C Y$ for some absolute constant $C>0$.

\begin{proposition}\label{prop:F_L_holder}
Let  $ 2 < p\leq \infty $ and $  \beta = 1- \frac{2}{p}$. If $ \g  \in C([0,T]; B^M_p)$ for some $M>1$, then the error term $F_L$ defined by \eqref{eq:def_linear_error} satisfies the estimate
$$
| F_L |_{C^\beta (\TT)} \leq C_M.
$$

\end{proposition}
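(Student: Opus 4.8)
The plan is to show that $F_L(\xi)$ is the sum over $\eta$ of a kernel acting on $\k(\eta)g(\eta)$ whose difference from the Hilbert kernel $\tfrac{1}{2}\cot(\tfrac{\xi-\eta}{2})$ is bounded and H\"older, together with a correction for the discrepancy between $g(\eta)\,d\eta$ and the measure $d\eta$. More precisely, I would rewrite
\[
F_L(\xi) = P.V.\int_{\TT} \k(\eta)\,\Big[ \N(\eta)\cdot\N(\xi)\,\frac{(\g(\xi)-\g(\eta))\cdot\T(\xi)}{|\g(\xi)-\g(\eta)|^2}\,g(\eta) - \tfrac12\cot\big(\tfrac{\xi-\eta}{2}\big)\Big]\,d\eta =: P.V.\int_\TT \k(\eta)\,Q(\xi,\eta)\,d\eta.
\]
The first step is to extract the principal-value singularity: by Corollary \ref{corollary:building_blocks} with $\zeta$ between $\xi$ and $\eta$, the expression $\dfrac{(\g(\xi)-\g(\eta))\cdot\T(\xi)}{|\g(\xi)-\g(\eta)|^2}$ equals $\dfrac{\xi-\eta}{g(\zeta)|\xi-\eta|^2} + O(C_M|\xi-\eta|^{\alpha-1})$, and using $\N(\eta)\cdot\N(\xi) = 1 + O(C_M|\xi-\eta|^{2\alpha})$ from \eqref{eq:building_blocks_a} together with the H\"older continuity of $g$, one sees that $\N(\eta)\cdot\N(\xi)\,\dfrac{(\g(\xi)-\g(\eta))\cdot\T(\xi)}{|\g(\xi)-\g(\eta)|^2}\,g(\eta) - \dfrac{1}{\xi-\eta}$ is $O(C_M|\xi-\eta|^{\alpha-1})$; subtracting the smooth part $\tfrac12\cot(\tfrac{\xi-\eta}{2}) - \tfrac{1}{\xi-\eta}$ (which is bounded and $C^\infty$ near $0$) leaves $|Q(\xi,\eta)| \le C_M|\xi-\eta|^{\alpha-1}$. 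Since $\alpha > 1/2$ when $p > 2$, this already gives $|F_L|_{L^\infty} \le C_M$ by integrating the locally integrable bound, and the principal value is in fact an honest absolutely convergent integral away from any cancellation issue — one should note the $P.V.$ is only needed to match the $\tfrac1{\xi-\eta}$ tail, which has odd symmetry, so the subtracted kernel $Q$ is genuinely integrable.

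The second and main step is the H\"older seminorm: I need $|F_L(\xi_1) - F_L(\xi_2)| \le C_M|\xi_1-\xi_2|^\beta$ with $\beta = 1 - \tfrac2p = 2\alpha - 1$. Write $h = |\xi_1-\xi_2|$ and split the $\eta$-integral into the near region $\{|\eta-\xi_1| \le 2h\}$ and the far region. On the near region one bounds each of $F_L(\xi_1)$, $F_L(\xi_2)$ separately using $|Q(\xi_i,\eta)| \le C_M|\xi_i-\eta|^{\alpha-1}$ and H\"older's inequality against $\k \in L^p$: $\int_{|\eta-\xi_1|\le 3h} |\k(\eta)|\,|\xi_i-\eta|^{\alpha-1}\,d\eta \le |\k|_{L^p}\big(\int_{|\sigma|\le 3h}|\sigma|^{(\alpha-1)p'}d\sigma\big)^{1/p'}$, and since $(\alpha-1)p' = -1$ exactly, this borderline integral needs a small extra gain — here is where the improvement from $p>2$ to exponent $\beta<\alpha$ enters: one uses instead the bound $|Q(\xi_i,\eta)|\le C_M |\xi_i-\eta|^{\alpha-1}$ only on a slightly larger scale and exploits that $\int_{|\sigma|\le 3h}|\sigma|^{-1+\epsilon}\,d\sigma \sim h^\epsilon$, matching $h^\beta$ after choosing $\epsilon$ via H\"older with the $L^p$ norm of $\k$ on a shrinking annulus; alternatively, and more cleanly, one keeps the exponent $\alpha - 1$ but writes the near contribution as $\le |\k|_{L^p(\{|\eta - \xi_1|\le 3h\})} \cdot h^{\alpha - 1/p} = |\k|_{L^p(\text{small})}\, h^{\beta}$ and notes $|\k|_{L^p(\text{small set})} \le |\k|_{L^p} \le C_M$ — actually one just needs $\alpha - 1/p = \beta$, which holds since $\alpha - 1/p = 1 - 2/p = \beta$. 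For the far region $|\eta - \xi_1| > 2h$, one uses the mean value estimate $|Q(\xi_1,\eta) - Q(\xi_2,\eta)| \le C_M\,h\,|\xi_1-\eta|^{\alpha-2}$, obtained by differentiating the kernel in the first variable (which produces one more negative power, controlled using \eqref{eq:building_blocks_b}, \eqref{eq:building_blocks_e}, and Corollary \ref{corollary:building_blocks} differentiated, i.e.\ the structure already appearing in \eqref{eq:d2_v_xi}); then $\int_{|\eta-\xi_1|>2h}|\k(\eta)|\,h\,|\xi_1-\eta|^{\alpha-2}\,d\eta \le h\,|\k|_{L^p}\big(\int_{|\sigma|>2h}|\sigma|^{(\alpha-2)p'}d\sigma\big)^{1/p'} \le C_M\,h\cdot h^{\alpha-2+1/p'} = C_M\,h^{\alpha-1/p} = C_M\,h^\beta$, using $(\alpha-2)p' < -1$ so the integral converges at $\infty$ and scales like $h^{(\alpha-2)+1/p'}$.

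The third step is bookkeeping: combining the near and far estimates gives $|F_L(\xi_1)-F_L(\xi_2)| \le C_M h^\beta$, and together with the $L^\infty$ bound from Step 1 this is exactly $|F_L|_{C^\beta(\TT)} \le C_M$ with the norm as in \eqref{eq:def_holder_norm} (using $|\xi-\eta|$ rather than $|\xi-\eta|^\beta$ in the seminorm is a typo-level discrepancy in the paper's definition; the intended $C^\beta$ seminorm uses the $\beta$-th power). One must also handle the $p = \infty$ endpoint claimed in the statement: then $\alpha = 1$, $\beta = 1$, and $\k \in L^\infty$; the near estimate becomes $\int_{|\eta-\xi_1|\le 3h}|\k|\,|\xi_i-\eta|^{0}\,d\eta \le |\k|_{L^\infty} h$ and the far estimate $h\int_{|\sigma|>2h}|\sigma|^{-1}d\sigma \sim h\log(1/h)$ — this is the one place the argument is genuinely delicate, and one expects to either accept a $\log$ loss or restrict to $p<\infty$ for the H\"older conclusion; since the illposedness construction only needs $p$ large but finite, I would simply state the bound for $2 < p < \infty$ and remark that $p=\infty$ requires a logarithmically weaker statement. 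The main obstacle is precisely the borderline nature of the near-diagonal integral when $\beta$ is close to $\alpha$: one has no room to spare in the exponent $(\alpha-1)p' = -1$, so the argument must be arranged so that the $L^p$ norm of $\k$ restricted to the shrinking near-region carries the needed smallness, which is legitimate but requires writing $h^\beta = h^{\alpha}\cdot h^{-1/p}$ and tracking that $h^{-1/p}$ against $\big(\int_{|\sigma|\le 3h}|\sigma|^{(\alpha-1)p'}\big)^{1/p'}$ correctly — equivalently, using the sharper form of H\"older with the explicit power of $h$ rather than a crude bound.
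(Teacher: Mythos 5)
Your overall strategy is the same as the paper's: subtract the local $\tfrac{1}{\xi-\eta}$ singularity from the kernel (the rest of the periodic Hilbert kernel is smooth), split the $\eta$-integral into a near region $\{|\xi-\eta| < 2h\}$ handled by the pointwise bound $|Q(\xi,\eta)|\leq C_M|\xi-\eta|^{\alpha-1}$ and H\"older, and a far region handled by differentiating the kernel in $\xi$ and applying the fundamental theorem of calculus. Two specific points need repair. First, your preliminary claim ``$(\alpha-1)p' = -1$ exactly'' is false: with $\alpha=1-1/p$ one has $(\alpha-1)p' = -\tfrac{1}{p-1}$, which is strictly larger than $-1$ for $p>2$, so the near-region integral is \emph{not} borderline and your ``clean'' recomputation giving $h^\beta = h^{\alpha-1/p}$ is the correct one (this is precisely what the paper does; there is no need to frame it as delicate). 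Second, and more substantively, your far-region ``mean value estimate'' $|Q(\xi_1,\eta)-Q(\xi_2,\eta)|\leq C_M h|\xi_1-\eta|^{\alpha-2}$ does not actually hold pointwise: differentiating the kernel hits $\T(\xi)$ and $\N(\xi)$, which by \eqref{Neq1222} produce a factor $\k(\xi)g(\xi)$, and $\k$ is only $L^p$, not bounded. So $\partial_\xi Q_L = O(C_M|\xi-\eta|^{-2+\alpha}) + \k(\xi)\,O(C_M|\xi-\eta|^{-1+\alpha})$, and the correct move (which the paper carries out) is to integrate this derivative along $[\xi_1,\xi_2]$ and use $\int_{\xi_1}^{\xi_2}|\k(\tau)|\,d\tau \leq C_M h^{1/p'} = C_M h^\alpha$, yielding the two-piece bound $C_M\big(h|\xi_1-\eta|^{-2+\alpha} + h^\alpha|\xi_1-\eta|^{-1+\alpha}\big)$; the second piece contributes an additional $O(h^\alpha)\leq O(h^\beta)$ to $|F_2|$ and must be tracked. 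With that fix, your argument closes. Your observation about the $p=\infty$ endpoint (a $h\log(1/h)$ loss in the far region with this method) is a fair point, though for the illposedness argument only $p$ large but finite is used.
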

\begin{proof}

Let us write $F_L$ in abbreviation
\begin{equation}\label{eq:F_L_1}
F_L =  P.V. \int_{\TT} \k(\eta) Q_{L}(\xi,\eta) \, d\eta + \int_{\TT} \k(\eta)Q_S (\xi,\eta), d\eta.
\end{equation}
where the first term with the kernel $Q_L$ is the main term
\begin{equation}\label{eq:F_L_1_aux1}
 Q_{L}(\xi,\eta)= -\frac{1}{\xi -\eta} + \N (\eta)  \cdot \N(\xi) \frac{  (\gamma(\xi)- \gamma(\eta)) \cdot \T(\xi)}{  |\gamma(\xi)- \gamma(\eta)|^{2   }}   g(\eta)
\end{equation}
while the second term has a smoothing kernel $Q_S(\xi, \eta) = -\sum_{n\geq 1} (\xi -\eta +2\pi n)^{-1}-(\xi -\eta -2\pi n)^{-1}.$

It suffices to show the bound only for the first term in \eqref{eq:F_L_1}.
Similarly to the earlier arguments and with slight abuse of notation, let us denote $\Delta_\delta f(\xi)=f(\xi+\delta)-f(\xi).$ 
We have
\begin{align*}
\Delta_\delta P.V. \int_{\TT} \k(\eta) Q_{L}(\xi,\eta) \, d\eta&  =  \int_{|\xi -\eta | < 2\delta }\k(\eta) \Big(Q_{L}(\xi+\delta ,\eta)  - Q_{L}(\xi,\eta) \Big) \, d\eta  \\
&\qquad +  \int_{|\xi -\eta | \geq 2\delta }\k(\eta) \Big(Q_{L}(\xi+\delta ,\eta)  - Q_{L}(\xi,\eta) \Big) \, d\eta \\
& :=F_1(\xi ) +F_2(\xi ).
\end{align*}
The rest of the proof is devoted to proving $|F_i(\xi ) | \leq C_M \delta^{\beta}$.

For the inner region $|\xi -\eta| < 2\delta$,  by Corollary \ref{corollary:building_blocks} and \eqref{eq:building_blocks_a} from Lemma \ref{lemma:building_blocks} for each $\eta \in \TT$, there exists a bounded function $C_{\eta}(\xi)  $ with $|C_{\eta}(\xi) | \leq C_M$ such that for all $\xi \neq \eta$
\begin{align*}
|Q_L(\xi ,\eta )| =  \left| \frac{\xi -\eta}{|\xi - \eta |^2} -  \N (\eta)  \cdot \N(\xi) \frac{  (\gamma(\xi)- \gamma(\eta)) \cdot \T(\xi)}{  |\gamma(\xi)- \gamma(\eta)|^{2   }} \, g(\eta) \right|  =C_{\eta}(\xi) |\xi -\eta|^{-1+ \alpha} .
\end{align*}
We then apply absolute value to the integrand in $F_1$ and use the bound on $C_{\eta}(\xi) $  to obtain that
\begin{align*}
| F_1 (\xi) | & \leq C_M  \int_{|\xi -\eta | < 2\delta }|\k(\eta)| \Big(  |\xi+\delta -\eta|^{-1+\alpha} +   |\xi -\eta|^{-1+\alpha}\Big) \, d\eta .
\end{align*}
By H\"older's inequality with $p'$ being the H\"older dual of $p$, it follows that
 \begin{align*}
| F_1 (\xi) | & \leq C_M  |\k |_{L^p}  \left[ \int_{|\xi -\eta | < 4\delta }      |\xi -\eta|^{(-1+\alpha)p'}  \, d\eta \right]^{\frac{1}{p'}}  = C(M,p)  |\k |_{L^p} \delta^{1 - \frac{2}{p}}.
\end{align*}
Here in the last step we have used $(-1+\alpha)p' = -\frac{1}{p-1} >-1$ when $p>2$.

For the outer region $|\xi -\eta| \geq 2\delta$, $Q_L$ is a.e. differentiable in $\xi$, and we first derive a bound for $\p_\xi Q_L$. Differentiating \eqref{eq:F_L_1_aux1} in $\xi$ gives
\begin{align*}
\p_\xi Q_{L}(\xi, \eta)  & =  \Big(  \frac{1}{| \xi - \eta|^2}   + \N (\eta)\cdot \N(\xi) \frac{  g(\xi)}{  |\gamma(\xi)- \gamma(\eta)|^{2   }}  g(\eta) \\
& \qquad \qquad \qquad  - 2\N (\eta)\cdot \N(\xi) \frac{  \left[ (\gamma(\xi)- \gamma(\eta)) \cdot \T(\xi)  \right]^2 g(\xi) }{  |\gamma(\xi)- \gamma(\eta)|^{4   }}g(\eta)  \Big)  \\
& \qquad+\k(\xi) g(\xi)   \Big(    \N (\eta)\cdot  \T(\xi) \frac{  (\gamma(\xi)- \gamma(\eta)) \cdot \T(\xi)}{  |\gamma(\xi)- \gamma(\eta)|^{2   }}-  \\
&\qquad\qquad\qquad\qquad \qquad\N (\eta)\cdot  \N(\xi) \frac{  (\gamma(\xi)- \gamma(\eta)) \cdot \N(\xi)}{  |\gamma(\xi)- \gamma(\eta)|^{2   }} \Big) g(\eta)  .
\end{align*}
We now use the estimates in Lemma \ref{lemma:building_blocks} to obtain the bounds on each summand in $\p_\xi Q_L$: for instance, \eqref{eq:building_blocks_a}, \eqref{aux22122} and the $C^\alpha$ continuity of $g$ yield
\begin{equation} \label{eq:F_L_derivative_factor_1}
- \N (\eta)\cdot \N(\xi) \frac{  g(\xi)}{  |\gamma(\xi)- \gamma(\eta)|^{2   }}  g(\eta)  = \frac{-1}{|\xi -\eta |^2  } + O(C_M  |\xi -\eta |^{-2+\alpha});
\end{equation}
\eqref{eq:building_blocks_a}, Corollary \ref{corollary:building_blocks}, and the $C^\alpha$ continuity of $g$ yield
\begin{equation}\label{eq:F_L_derivative_factor_2}
2\N (\eta)\cdot \N(\xi) \frac{  \left[ (\gamma(\xi)- \gamma(\eta)) \cdot \T(\xi)  \right]^2 g(\xi) }{  |\gamma(\xi)- \gamma(\eta)|^{4   }}g(\eta)     = \frac{ 2}{|\xi -\eta |^2  } + O(C_M    |\xi -\eta |^{-2+\alpha}) ;
\end{equation}
and similarly
\begin{equation}
\begin{aligned}\label{eq:F_L_derivative_factor_3}
     \left|  \N (\eta)\cdot  \T(\xi) \frac{  (\gamma(\xi)- \gamma(\eta)) \cdot \T(\xi)}{  |\gamma(\xi)- \gamma(\eta)|^{2   }} \right|  & \leq C_M      |\xi -\eta |^{-1  +\alpha} ,\\
  \left| \N (\eta)\cdot  \N(\xi) \frac{  (\gamma(\xi)- \gamma(\eta)) \cdot \N(\xi)}{  |\gamma(\xi)- \gamma(\eta)|^{2   }}  \right|     & \leq C_M    |\xi -\eta |^{-1 +\alpha}.
\end{aligned}
\end{equation}
Then it follows from \eqref{eq:F_L_derivative_factor_1}, \eqref{eq:F_L_derivative_factor_2}, and \eqref{eq:F_L_derivative_factor_3}  that for $\xi,\eta \in \TT $  such that $|\xi -\eta| \geq 2\delta$, we have
\begin{align}\label{eq:F_L_derivative_factor_4}
\p_\xi Q_{L}(\xi, \eta)    =  O(C_M   |\xi -\eta|^{-2+\alpha })  +\k(\xi)  O(C_M    |\xi -\eta |^{-1  + \alpha}).
\end{align}
By the fundamental theorem of calculus and \eqref{eq:F_L_derivative_factor_4}, for any $\delta>0$ and any $\eta \in \TT$ with $|\xi -\eta| \geq 2\delta $ we have
\begin{equation}\label{eq:Q_aux44}
\begin{aligned}
\Big|  Q_L(\xi +\delta,\eta)  -Q_L(\xi, \eta) \Big| & \leq  \int_{\xi}^{\xi+\delta} \Big| \p_\xi Q_{L}( \xi'  , \eta)  \Big| \,d \xi'  \\
& \leq C_M  \int_{\xi}^{\xi+\delta}  \left( |\xi'  -\eta|^{-2+\alpha } + |\k(\xi') |     |\xi'   -\eta |^{-1  + \alpha}\right) \,d \xi'     .
\end{aligned}
\end{equation}

Now we compute the integral on the right-hand side above. Since $|\xi -\eta|\geq 2\delta$, we have $| \xi'   -\eta| \leq 2|\xi -\eta|$ in the above integral, and it follows from \eqref{eq:Q_aux44}, $ |\k|_{L^p(\TT)} \leq C_M $, and H\"older's inequality that
\begin{align}
\Big|  Q_L(\xi +\delta,\eta)  -Q_L(\xi, \eta) \Big|   & \leq C_M \left(\delta  |\xi -\eta |^{-2+\alpha}  +  |\xi -\eta |^{-1+\alpha}\int_{\xi}^{\xi + \delta }  |\k(\xi'   )|\,d \xi'  \right)\nonumber  \\
 & \leq C_M \left( \delta |\xi -\eta|^{-2+\alpha} + \delta^{\alpha}|\xi-\eta|^{-1 +\alpha} \right).\label{eq:Q_aux46} 
\end{align}
Now that we have a good bound on the finite difference of $Q_L$, it follows from \eqref{eq:Q_aux46} that
 \begin{align*}
| F_2 (\xi) |  & \leq C_M  \int_{|\xi -\eta | \geq 2\delta }\k(\eta) \Big|Q_{L}(\xi+\delta ,\eta)  - Q_{L}(\xi,\eta) \Big| \, d\eta \\
& \leq C_M  \int_{|\xi -\eta | \geq 2\delta }\k(\eta) ( \delta |\xi -\eta|^{-2+\alpha} +\delta^\alpha |\xi -\eta|^{-2+\alpha}) \, d\eta \\
& \leq  C_M  \delta  \left[ \int_{|\xi -\eta | \geq 2\delta }  |\xi -\eta|^{( -2+\alpha)p'} \, d\eta \right]^{\frac{1}{p'}}
+C_M \delta^\alpha \left[ \int_{|\xi -\eta | \geq 2\delta }  |\xi -\eta|^{( -1+\alpha)p'} \, d\eta \right]^{\frac{1}{p'}}  \\
& \leq  C_M \delta^{1- \frac{2}{p}},
\end{align*}
where we have used that $|\k|_{L^p(\TT)} \leq C_M $
together with the H\"older inequality.

\end{proof}

Next, we show that the other error term $F_N$ is also H\"older continuous in space.
\begin{proposition}\label{prop:F_holder}
Let $T>0$. For any $  \frac{3}{2}< p\leq \infty $, if $ \g  \in C([0,T]; B^M_p)$ for some $M>1$, then the nonlinear error term $F_N$ defined by \eqref{eq:def_nonlinear_error} satisfies the estimate
$$
| F_N |_{C^\alpha (\TT)} \leq C_M,
$$
where as before $\alpha = 1 - \frac{1}{p}$.
\end{proposition}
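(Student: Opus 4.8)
The plan is to split $F_N$ into its two integral pieces and bound the $C^\alpha$ norm of each by estimating the finite difference $\Delta_\delta F_N(\xi) = F_N(\xi+\delta) - F_N(\xi)$ via a splitting of the domain of integration into the inner region $|\xi - \eta| < 2\delta$ and the outer region $|\xi-\eta| \geq 2\delta$, exactly as in the proof of Proposition~\ref{prop:F_L_holder}. Note first that $F_N$ has no principal-value cancellation needed: by the maximal estimates \eqref{eq:building_blocks_M_d}, \eqref{eq:building_blocks_M_e}, and \eqref{eq:building_blocks_c}, \eqref{eq:building_blocks_f} in Lemma~\ref{lemma:building_blocks}, both integrands are absolutely integrable with a bound of the form $C_M \mathcal{M}\k(\xi)|\xi-\eta|^{-1+2\alpha}$ (the extra $\alpha$ over the $F_L$ case comes from the additional factor $\T(\eta)\cdot\N(\xi) = O(C_M|\xi-\eta|^\alpha)$ present in both terms of $F_N$), so $F_N$ is already bounded in $L^\infty$ with norm $\leq C_M$ — wait, more carefully, $\mathcal{M}\k$ is only $L^p$, so one needs H\"older's inequality; since $2\alpha - 1 = 1 - \frac2p$ and the relevant exponent $(-1+2\alpha)$ paired against $\k \in L^p$ is integrable when $p > 3/2$, one gets the pointwise-in-the-integral bound $|F_N(\xi)| \leq C_M$ using only $|\k|_{L^p} \leq M$ and that the remaining kernel power is $> -1$ after multiplying by $|\xi-\eta|^\alpha$ and using a pointwise bound on $\T(\eta)\cdot\N(\xi)$ from \eqref{eq:building_blocks_c}.

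For the finite difference, in the inner region $|\xi-\eta| < 2\delta$ I would apply absolute values directly to $F_N(\xi+\delta,\eta)$ and $F_N(\xi,\eta)$ separately: using $\T(\eta)\cdot\N(\xi) = O(C_M|\xi-\eta|^\alpha)$ from \eqref{eq:building_blocks_c}, $|\g(\xi)-\g(\eta)|^{-1} = O(C_M|\xi-\eta|^{-1})$ from \eqref{eq:building_blocks_f}, and $(\T(\xi)-\T(\eta))\cdot\T(\xi) = O(C_M|\xi-\eta|^{2\alpha})$ from \eqref{eq:building_blocks_e} for the first term (and $(\g(\xi)-\g(\eta))\cdot(\T(\xi)-\T(\eta)) = O(C_M|\xi-\eta|^{2+\alpha})$ from the non-maximal version of \eqref{eq:building_blocks_M_e}, say obtained by pairing \eqref{eq:building_blocks_b} with the chord bound, for the second), each integrand is $O(C_M|\xi-\eta|^{-1+2\alpha})$, so integrating over $|\xi-\eta| < 4\delta$ (with the shift) against $\k \in L^p$ using H\"older yields a bound $C_M \delta^{2\alpha - 1/p} = C_M\delta^{1-3/p}$; since $p > 3/2$ was assumed this is $\geq C_M \delta^{1-1/p}$ only if $p \geq \ldots$ — actually one needs to be careful: the claimed exponent is $\alpha = 1 - 1/p$, and $2\alpha - 1/p = 2 - 3/p$, which is $\geq 1 - 1/p = \alpha$ iff $1 \geq 2/p$ iff $p \geq 2$. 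For $3/2 < p < 2$ the inner region gives the weaker exponent $2 - 3/p \in (0,1)$ — so in that range I should instead not use the crude $2\alpha$ gain but rather extract the gain from the maximal-function estimates \eqref{eq:building_blocks_M_d}, \eqref{eq:building_blocks_M_e}, which give integrands $O(C_M\mathcal{M}\k(\xi)|\xi-\eta|^{-1+2\alpha})$, wait that's the same power. The clean resolution is: the inner-region integral in $\Delta_\delta F_N$ is bounded, via H\"older with the $\mathcal{M}\k \in L^p$ bound absorbed pointwise, by $C_M \mathcal{M}\k(\xi)\,\delta^{2\alpha - 1 + 1} = C_M \mathcal{M}\k(\xi) \delta^{2\alpha}$? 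No — integrating $|\xi-\eta|^{-1+2\alpha}$ over an interval of length $\delta$ gives $\delta^{2\alpha}$, and $2\alpha = 2 - 2/p \geq 1 - 1/p = \alpha$ iff $p \geq 2$ again. Since the statement only requires $p > 3/2$, I will instead keep $\k$ (not $\mathcal{M}\k$) inside and use H\"older: $\int_{|\xi-\eta|<4\delta} |\k(\eta)| |\xi-\eta|^{-1+\alpha}\,d\eta$ — here I only use ONE power of the $|\xi-\eta|^\alpha$ from $\T(\eta)\cdot\N(\xi)$ and discard the rest as $O(1)$ on the bounded domain — giving $C_M|\k|_{L^p}\delta^{\alpha - 1/p} = C_M\delta^{1-2/p} = C_M\delta^\beta$; hmm that's $\beta$ not $\alpha$. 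I suspect the honest bound uses the full structure more carefully, and I would reconcile this during writing; the key point is that all powers appearing are strictly positive for $p > 3/2$ and at least $\alpha$ after optimizing which factors to estimate sharply (using two powers of the $\N\cdot\T$-type gain where available rather than one).

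For the outer region $|\xi-\eta| \geq 2\delta$, I would differentiate the kernel $Q_N(\xi,\eta)$ (the integrand of each piece of $F_N$ divided by $\k(\eta)$, so to speak, though here $\k(\eta)$ does not appear — $F_N$ has $\T(\eta)\cdot\N(\xi)$ and $g(\eta)$ but no curvature at $\eta$, so actually I should differentiate the whole integrand in $\xi$) in $\xi$ and bound $\p_\xi$ of each integrand by $C_M(1 + |\k(\xi)|)|\xi-\eta|^{-1+\alpha - \text{something}}$; the differentiation produces either a factor $\k(\xi) g(\xi)$ (from differentiating a $\T(\xi)$ or $\N(\xi)$), which is controlled in $L^p$, or an extra negative power of $|\xi-\eta|$ compensated by the gains from \eqref{eq:building_blocks_c}, \eqref{eq:building_blocks_e}, \eqref{eq:building_blocks_b}. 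Then the fundamental theorem of calculus over $[\xi, \xi+\delta]$ together with $|\xi'-\eta| \sim |\xi-\eta|$ on that interval, followed by H\"older's inequality in $\eta$ over $|\xi-\eta| \geq 2\delta$, yields the $C_M(\delta + \delta^\alpha)|\xi-\eta|$-type power count that integrates to $C_M\delta^\alpha$; the convergence of $\int_{|\xi-\eta|\geq 2\delta}|\xi-\eta|^{(-1+\alpha - k)p'}\,d\eta \lesssim \delta^{(-1+\alpha-k)p' + 1}$ requires $p > 3/2$ in the worst term. The main obstacle I anticipate is the bookkeeping of exactly which of the several Lemma~\ref{lemma:building_blocks} and Corollary~\ref{corollary:building_blocks} estimates to apply to each of the (many) summands produced by $\p_\xi$ of the two $F_N$-integrands, and verifying in each case that the resulting exponent of $|\xi-\eta|$ is large enough (i.e. $> -1/p'$ in the outer region) precisely when $p > 3/2$, with the threshold $3/2$ being genuinely attained by one specific summand — the term where $\p_\xi$ hits the $|\g(\xi)-\g(\eta)|^{-4}$ denominator and is only partially compensated. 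Everything else is a routine repetition of the technique in Proposition~\ref{prop:F_L_holder}.
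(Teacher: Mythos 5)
Your overall skeleton — split the finite difference $\Delta_\delta F_N$ over inner ($|\xi-\eta|<2\delta$) and outer ($|\xi-\eta|\geq 2\delta$) regions, bound directly in the inner region and use FTC in $\xi$ in the outer — is the same as the paper's. But the execution has two concrete problems that you flag yourself but do not resolve, and they are not superficial.

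\textbf{The inner region.} You write the pointwise bound in the form $C_M\,\mathcal{M}\k(\xi)\,|\xi-\eta|^{-1+2\alpha}$, with the maximal function evaluated at the ``outer'' variable $\xi$. That choice is the one that does \emph{not} work: $\mathcal{M}\k(\xi)$ pulls out of the $d\eta$-integral and is unbounded in $\xi$ (it is only in $L^p$), so you cannot conclude $F_N \in L^\infty$ this way, let alone control the finite difference. The maximal estimates \eqref{eq:building_blocks_M_d} and \eqref{eq:building_blocks_M_e} hold with $\mathcal{M}\k(\zeta)$ for any $\zeta$ between $\xi$ and $\eta$, and the useful choice is $\zeta=\eta$, yielding $|R_N(\xi,\eta)| \leq C_M \mathcal{M}\k(\eta)|\xi-\eta|^{-1+2\alpha}$ — now $\mathcal{M}\k(\eta)\in L^p(d\eta)$ and $|\xi-\eta|^{-1+2\alpha}\in L^{p'}(d\eta)$ precisely when $p>3/2$, and H\"older closes the estimate. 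Your fallback suggestion to ``keep $\k$ (not $\mathcal{M}\k$) inside and use H\"older'' cannot be carried out because the integrand $R_N$ contains \emph{no explicit curvature factor at all}: the curvature enters only through the maximal-function estimates for the differences $\T(\xi)-\T(\eta)$, etc. When you instead apply the non-maximal bound \eqref{eq:building_blocks_e} to the first term of $F_N$, you get $|\xi-\eta|^{3\alpha-2}$, which is strictly weaker than $|\xi-\eta|^{-1+2\alpha}$ for $\alpha<1$; this is the root of the exponent confusion you run into, and it does not disappear under further optimization.

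\textbf{The outer region.} You propose bounding $\p_\xi$ of the integrand by $C_M(1+|\k(\xi)|)|\xi-\eta|^{-1+\alpha-\text{something}}$ and then integrating in $\eta$. This is missing a factor of $\mathcal{M}\k(\eta)$, which is again essential: after FTC and H\"older in $\int_\xi^{\xi+\delta}(\cdot)\,d\xi'$ one gets $\delta^\alpha$, but then the remaining $\eta$-integral is $\int_{|\xi-\eta|\geq 2\delta}\mathcal{M}\k(\eta)|\xi-\eta|^{-1+\alpha}\,d\eta$, controlled via H\"older against $\mathcal{M}\k\in L^p$. Without the $\mathcal{M}\k(\eta)$ factor (i.e., with a pure $|\xi-\eta|^{-1+\cdots}$ kernel), you either get a $\log\delta$-type loss or you need to steal an extra power of $|\xi-\eta|$ — a free parameter you do not actually have. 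The paper produces this factor by systematically applying the maximal versions \eqref{eq:building_blocks_M_a}, \eqref{eq:building_blocks_M_c}--\eqref{eq:building_blocks_M_e} with $\zeta=\eta$ to every summand of $\p_\xi R_N$ (the nine terms $I_1,\dots,I_5,J_1,\dots,J_4$, with the $I_i$ carrying the $\k(\xi)g(\xi)$ prefactor and the $J_i$ carrying a $\mathcal{M}\k(\xi)$ prefactor). Also, your claim that the threshold $p>3/2$ is attained by the summand where $\p_\xi$ hits $|\g(\xi)-\g(\eta)|^{-4}$ is not right: after the maximal-function game, all the outer-region summands land at exponent $-1+\alpha$; the $3/2$ threshold comes from the inner-region integrability condition $(-1+2\alpha)p'>-1$.

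In short, you have the right idea but are missing the single mechanism that makes the argument close: always place the maximal function at the integration variable $\eta$ so that it can be paired against the $L^{p'}$ kernel via H\"older, in both regions and for every summand of $\p_\xi R_N$.
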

\begin{proof}
Let us introduce the shorthand notation
\begin{align*}
F_N (\xi )=   \int_{\TT} R_{N}(\xi,\eta) \, d\eta.
\end{align*}
As before denote by $\Delta_\delta$ the difference of forward spacing $\delta>0$. We have
\begin{align*}
\Delta_\delta F_N (\xi ) &  =  \int_{|\xi -\eta | < 2\delta }  \Big(R_{N}(\xi+\delta ,\eta)  - R_{N} (\xi,\eta) \Big) \, d\eta  \\
&\qquad +  \int_{|\xi -\eta | \geq 2\delta }R_{N}(\xi+\delta ,\eta)  - R_{N}(\xi,\eta) \Big) \, d\eta \\
& := F_3(\xi ) +F_4(\xi ).
\end{align*}
We need to show $|F_i(\xi ) | \leq C_M \delta^{\alpha}$.

For the inner region, $ |\xi -\eta| \leq 2\delta $, we first show the bound
\begin{align}\label{eq:F_N_aux1}
| R_N (\xi,\eta ) | \leq C_M \mathcal{M}\k(\eta)    |\xi -\eta|^{-1+2 \alpha} .
\end{align}
Indeed, as in the proof of Proposition \ref{prop:derivatives_v}, by \eqref{eq:building_blocks_c}, \eqref{eq:building_blocks_M_d}, and \eqref{eq:building_blocks_M_e}
\begin{align*}
| R_N (\xi,\eta ) |  & \leq C_M   \Big|   \T(\eta )\cdot \N(\xi) \big( (  \gamma(\xi)- \gamma(\eta)  ) \cdot \T(\xi) \big)  \frac{    \big( (  \gamma(\xi)- \gamma(\eta)  ) \cdot (\T(\xi) -\T(\eta)) \big)  }{|\gamma(\xi)- \gamma(\eta)|^{4  }} \,  \Big|   \\
	&	\qquad   +C_M    \Big| \T(\eta)  \cdot \N(\xi)\frac{ \big[ (\T(\xi ) - \T(\eta))\cdot \T( \xi)   \big] }{ |\gamma(\xi)- \gamma(\eta)|^{2  } }    \,   \Big|    \\
&\leq  C_M|\xi -\eta|^\alpha  |\xi -\eta| \mathcal{M}\k(\eta) |\xi -\eta|^{2+\alpha } |\xi -\eta|^{-4}  \\
&\qquad + C_M |\xi -\eta|^\alpha \mathcal{M}\k(\eta) |\xi -\eta|^{1+\alpha } |\xi -\eta|^{-2}\\
&\leq C_M  \mathcal{M}\k(\eta) |\xi -\eta|^{-1+ 2 \alpha }.
\end{align*}
Then by \eqref{eq:F_N_aux1} and the H\"older inequality with $p' = \frac{p}{p-1}$
\begin{align*}
|F_3 (\xi)|& \leq C_M \int_{|\xi -\eta | <2\delta} \mathcal{M}\k{\eta} \left[  |\xi +\delta -\eta|^{-1+2\alpha } + |\xi -\eta|^{-1+2\alpha }\right] \, d\eta \\
&\leq C_M   \Big( \int_{|\xi -\eta | <4\delta}   |\xi -\eta|^{(-1+ 2\alpha)p' }  \, d\eta \Big)^\frac{1}{p'}.
\end{align*}
This is integrable since $(1-2\alpha)p' = \frac{p-2}{p-1} \in (-1,1]$ when $ p> \frac{3}{2}$, and we compute the integral to find that $|F_1(\xi)| \leq C_M \delta^{ 2 - \frac{3}{p}}$, which  is more than we need.

Next, we consider the outer region $ |\xi -\eta| \geq  2\delta $. We use the same strategy as in the previous proposition. As $R_N(\xi , \eta)$ is a.e. differentiable in $\xi$ in this region, we first derive a bound for the $\p_\xi R_N$. Differentiating in $\xi$ gives
\begin{equation}
\begin{aligned}\label{eq:F_N_aux2}
\p_\xi R_N(\xi ,\eta) & = \k (\xi ) g(\xi) \big(- I_1  +I_2 +2 I_3 -2 I_4 -2I_5 \big) \\
&\qquad +   g(\xi) \big( 2 J_1 +2J_2 +2J_3 - 8J_4 \big)
\end{aligned}
\end{equation}
where the terms $I_i$ are
 \begin{align*}
I_1: & =   \T( \eta  )  \cdot \T(  \xi )\frac{  \big[ (\T( \xi  ) - \T(  \eta  ))\cdot \T( \xi )   \big]     }{ |\gamma(  \xi )- \gamma( \eta  )|^{2  } } g(\eta )   \,    \\
I_2: &= \T( \eta  )  \cdot \N(  \xi )\frac{  \big[ (\T( \xi  ) - \T(  \eta  ))\cdot \N( \xi )   \big]     }{ |\gamma(  \xi )- \gamma( \eta  )|^{2  } } g(\eta )   \,     \\
I_3 :& =	  \T( \eta  )\cdot \T( \xi  ) \frac{    \big( (  \gamma(  \xi )- \gamma( \eta  )  ) \cdot \T(  \xi ) \big) \big( (  \gamma(  \xi )- \gamma( \eta  )  ) \cdot (\T(  \xi ) -\T( \eta  )) \big)  }{|\gamma(  \xi )- \gamma( \eta   )|^{4  }} g(\eta )\,   \\
I_4 :& =    \T( \eta  )\cdot \N( \xi  ) \frac{    \big( (  \gamma(  \xi  )- \gamma( \eta  )  ) \cdot \N(\xi  ) \big) \big( (  \gamma(  \xi )- \gamma( \eta  )  ) \cdot (\T(  \xi ) -\T( \eta  )) \big)  }{|\gamma(  \xi )- \gamma( \eta  )|^{4  }}  g(\eta )   \\
I_5 :& =    \T( \eta  )\cdot \N( \xi  ) \frac{    \big( (  \gamma(  \xi  )- \gamma( \eta  )  ) \cdot \T(  \xi ) \big) \big( (  \gamma(  \xi )- \gamma( \eta  )  ) \cdot  \N(  \xi )    \big)  }{|\gamma(  \xi )- \gamma(\eta )|^{4  }} g(\eta )
\end{align*}
and  terms $J_i$'s are
\begin{align*}
J_1:& =   \T(\eta  )\cdot \N( \xi ) \frac{  (\T( \xi ) -\T(\eta  ))\cdot \T(\xi)    (  \gamma(  \xi )- \gamma( \eta  )  ) \cdot  \T( \xi )   }{|\gamma(  \xi )- \gamma(\eta  )|^{4  }} g( \eta )\,  d \eta \\
J_2:& =   \T(\eta  )\cdot \N( \xi ) \frac{      (  \gamma(  \xi )- \gamma( \eta  )  ) \cdot (\T( \xi ) -\T(\eta  ))   }{|\gamma(  \xi )- \gamma(\eta  )|^{4  }} g( \eta )\,  d \eta  \\
J_3:&=   \T(\eta  )\cdot \N( \xi   ) \frac{    \big( (  \gamma(  \xi )- \gamma( \eta  )  ) \cdot \T(  \xi ) \big) \big(   \T(  \xi )  \cdot (\T(  \xi ) -\T( \eta  )) \big)  }{|\gamma(  \xi )- \gamma( \eta )|^{4  }}  g(\eta  )   \\
J_4:&=   \T(\eta  )\cdot \N( \xi   ) \frac{    \big( (  \gamma(  \xi )- \gamma( \eta  )  ) \cdot \T(  \xi ) \big)^2 \big( (  \gamma(  \xi )- \gamma( \eta  )  ) \cdot (\T(  \xi ) -\T( \eta  )) \big)  }{|\gamma(  \xi )- \gamma( \eta )|^{6  }}  g(\eta  ).
\end{align*}
Next, we will derive the bound
\begin{align}\label{eq:F_N_aux3}
 \big| \p_\xi R_N (\xi ,\eta) \big| \leq C_M \big(|\k(\xi)|  +\mathcal{M}\k(  \xi) \big) \mathcal{M}\k(  \eta )| \xi -  \eta |^{ -1+\alpha  } .
\end{align}
By the structure of $ \p_\xi R_N$ given by \eqref{eq:F_N_aux2}, it suffices to show for  $I_i$'s the bound
\begin{align}\label{eq:F_N_aux_Ii}
 \big| I_i (\xi ,\eta) \big| \leq C_M   \mathcal{M}\k(  \eta )| \xi -  \eta |^{ -1+\alpha  }
\end{align}
and
for  $J_i$'s the bound
\begin{align}\label{eq:F_N_aux_Ji}
\big| J_i (\xi ,\eta) \big| \leq C_M  \mathcal{M}\k(  \xi)   \mathcal{M}\k(  \eta )| \xi -  \eta |^{ -1+\alpha  }.
\end{align}

\noindent
{\textbf{Estimates of $I_i$'s:}}

For each $I_i$ term, we use one of the maximal bounds in Lemma \ref{lemma:building_blocks}.

By \eqref{eq:building_blocks_M_d},
\begin{align*}
 \big| \T( \eta  )  \cdot \T( \xi )  (\T(\xi ) - \T( \eta ))\cdot \T( \xi )  \big| \leq C_M         \mathcal{M}\k(\eta) | \xi - \eta |^{1+\alpha },
\end{align*}
so $I_1$ satisfies \eqref{eq:F_N_aux_Ii}.

By  \eqref{eq:building_blocks_b} and \eqref{eq:building_blocks_M_a},
\begin{equation*}
\big|  \T( \eta)  \cdot \N( \xi )    \big | \big |    (\T( \xi ) - \T( \eta ))\cdot \N( \xi )   \big |   \leq C_M  \mathcal{M}\k(\eta) |\xi-\eta |^{1+ \alpha}   ,
\end{equation*}
so $I_2$  satisfies \eqref{eq:F_N_aux_Ii}.

By \eqref{eq:building_blocks_M_e},
\begin{align*}
\big|         (  \gamma(  \xi )- \gamma( \eta  )  ) \cdot (\T(  \xi ) -\T( \eta  ))   \big| \leq C_M  \mathcal{M}\k(\eta) |\xi-\eta |^{2+ \alpha}
\end{align*}
so $I_3$   satisfies \eqref{eq:F_N_aux_Ii}.

By \eqref{eq:building_blocks_c}, \eqref{eq:building_blocks_d}, and \eqref{eq:building_blocks_M_e},
\begin{align*}
\big|  \T( \eta  )\cdot \N( \xi  )     \big( (  \gamma(  \xi  )- \gamma( \eta  )  ) &\cdot \N(\xi  ) \big) \big( (  \gamma(  \xi )- \gamma( \eta  )  ) \cdot (\T(  \xi ) -\T( \eta  )) \big)   \big| \\
&\leq C_M \mathcal{M}\k(\eta) |\xi-\eta |^{3+ 3\alpha}
\end{align*}
so $I_4$   satisfies \eqref{eq:F_N_aux_Ii}.

By \eqref{eq:building_blocks_c} and \eqref{eq:building_blocks_M_c},
\begin{align*}
 \big| \T( \eta  )\cdot \N( \xi  )    \big( (  \gamma(  \xi  )- \gamma( \eta  )  ) \cdot \T(  \xi ) \big) \big( (  \gamma(  \xi )- \gamma( \eta  )  ) \cdot  \N(  \xi )    \big)    \big|  \leq   C_M \mathcal{M}\k(\eta) |\xi-\eta |^{3+  \alpha}
\end{align*}
so $I_5$   satisfies \eqref{eq:F_N_aux_Ii}.

\noindent
{\textbf{Estimates of $J_i$:}}

Next, we look at the $J_i$ terms. Each of these requires using two maximal bounds from Lemma \ref{lemma:building_blocks}.

By \eqref{eq:building_blocks_M_a} and \eqref{eq:building_blocks_M_d}, $J_1 $ satisfies \eqref{eq:F_N_aux_Ji}:
\begin{align*}
 |J_1 (  \xi,\eta    )|  & \leq  \Big|  \T(\eta  )\cdot \N( \xi ) \frac{  (\T( \xi ) -\T(\eta  ))\cdot \T(\xi)      }{|\gamma(  \xi )- \gamma(\eta  )|^{3  }}  \Big| \\
& \leq  C_M  \mathcal{M}\k(  \xi)   \mathcal{M}\k(  \eta )| \xi -  \eta |^{ -1+\alpha  }.
\end{align*}

By \eqref{eq:building_blocks_M_a} and \eqref{eq:building_blocks_M_e}, $J_2 $ satisfies \eqref{eq:F_N_aux_Ji}:
\begin{align*}
| J_2 (  \xi,\eta  )|  & \leq C_M  \Big| \T(\eta  )\cdot \N( \xi ) \frac{      (  \gamma(  \xi )- \gamma( \eta  )  ) \cdot (\T( \xi ) -\T(\eta  ))   }{|\gamma(  \xi )- \gamma(\eta  )|^{4  }} g( \eta )\Big| \\
&\leq C_M  \mathcal{M}\k(  \xi)   \mathcal{M}\k(  \eta )| \xi -  \eta |^{ -1+\alpha  }.
\end{align*}

By \eqref{eq:building_blocks_M_a} and \eqref{eq:building_blocks_M_d}, $J_3 $ satisfies \eqref{eq:F_N_aux_Ji}:
\begin{align*}
 | J_3 (  \xi,\eta  )| & \leq C_M \Big|  \T(\eta  )\cdot \N( \xi   ) \frac{      \big(   \T(  \xi )  \cdot (\T(  \xi ) -\T( \eta  )) \big)  }{|\gamma(  \xi )- \gamma( \eta )|^{3  }}  \Big|\\
&\leq C_M  \mathcal{M}\k(  \xi)   \mathcal{M}\k(  \eta )| \xi -  \eta |^{ -1+\alpha  }.
\end{align*}

By \eqref{eq:building_blocks_M_a} and \eqref{eq:building_blocks_M_e}, $J_4 $ satisfies \eqref{eq:F_N_aux_Ji}:
\begin{align*}
 |  J_4 (  \xi,\eta  )|  &\leq C_M \Big|   \T(\eta  )\cdot \N( \xi   ) \frac{    \big( (  \gamma(  \xi )- \gamma( \eta  )  ) \cdot \T(  \xi ) \big)^2 \big( (  \gamma(  \xi )- \gamma( \eta  )  ) \cdot (\T(  \xi ) -\T( \eta  )) \big)  }{|\gamma(  \xi )- \gamma( \eta )|^{6  }} \\
& \leq C_M \Big|  \T(\eta  )\cdot \N( \xi   ) \frac{      \big( (  \gamma(  \xi )- \gamma( \eta  )  ) \cdot (\T(  \xi ) -\T( \eta  )) \big)  }{|\gamma(  \xi )- \gamma( \eta )|^{4  }}  \Big|\\
&\leq C_M  \mathcal{M}\k(  \xi)   \mathcal{M}\k(  \eta )| \xi -  \eta |^{ -1+\alpha  }   .
\end{align*}

Combining the bounds for $I_i$ and $J_i$, we have established \eqref{eq:F_N_aux3}. Since $ \xi +\delta \neq  \eta  $ in the region $|\xi-\eta|\geq 2\delta$, by the fundamental theorem of calculus, \eqref{eq:F_N_aux3}, and the bound $|\xi' -\eta| \leq 2|\xi -\eta|$ for all $\xi' \in [\xi, \xi+\delta]$, we have
\begin{align*}
\left|  R_N (\xi+\delta ,\eta) - R_N (\xi  ,\eta) \right| & \leq \int_{\xi}^{\xi+\delta}\big| \p_\xi R_N  (\xi' ,\eta)  \big|  \, d\xi' \\
& \leq C_M  \mathcal{M}\k(  \eta )  \int_{\xi}^{\xi+\delta} |\xi' -\eta|^{-1+\alpha}   \left(  \k(\xi )+\mathcal{M}\k( \xi  ) \right)\, d \xi'\\
& \leq C_M   |\xi  -\eta|^{-1+\alpha} \mathcal{M}\k(  \eta )   \int_{\xi}^{\xi+\delta} \left(  \k(\xi )+\mathcal{M}\k( \xi  ) \right) \, d\xi' \\
&  \leq C_M  \delta^{\alpha} |\xi  -\eta|^{-1+\alpha}  \mathcal{M}\k(  \eta )  .
\end{align*}
Inserting this above into $F_2$, by H\"older's inequality we have
\begin{align*}
| F_2 (\xi) | \leq C_M \delta^{\alpha} \int_{|\xi -\eta| \geq 2\delta}    |\xi -\eta|^{-1+\alpha}\mathcal{M}\k(  \eta ) \, d\eta  \leq C_M \delta^{\alpha }
\end{align*}
where we have used that $ |\xi -\eta|^{-1+\alpha} \in L^{p'}(\TT) $ when $p> \frac{3}{2}$ .

\end{proof}

We recall that the multiplicative coefficient $ a = -      \p_s    v   \cdot \T$ is also H\"older continuous, with an exponent $\alpha  = 1 - \frac{1}{p}$ by Proposition \ref{prop:derivatives_v}.

\begin{lemma}\label{eq:lemma:Holder_multiplicative_a}
Let $ 1 < p \leq \infty$, and assume $\g  \in C([0,T]; B^M_p)$, then the  coefficient $ a  =-      \p_s    v   \cdot \T$ satisfies the estimate
$$
| a |_{C^\alpha (\TT)} \leq C_M .
$$
\end{lemma}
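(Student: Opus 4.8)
The plan is to work from the principal‑value–free representation of $a=-\p_s v\cdot\T$ supplied by Lemma~\ref{lemma:simplification}, rather than from the formula \eqref{eq:d_v_xi} (which would leave a Hilbert‑transform–type term to contend with — crucially, the quantity $\p_s v\cdot\T$ has \emph{no} such term, which is exactly why it is $C^\alpha$ rather than merely $\BMO$ at $p=\infty$). By \eqref{aux1522} we write $a(\xi)=\int_{\TT}R(\xi,\eta)\,d\eta$ with
$$R(\xi,\eta):=\T(\eta)\cdot\N(\xi)\,\frac{(\gamma(\xi)-\gamma(\eta))\cdot\N(\xi)}{|\gamma(\xi)-\gamma(\eta)|^{2}}\,g(\eta).$$
The estimates \eqref{eq:building_blocks_c}, \eqref{eq:building_blocks_d} and \eqref{eq:building_blocks_f} of Lemma~\ref{lemma:building_blocks} give the pointwise bound $|R(\xi,\eta)|\le C_M|\xi-\eta|^{2\alpha-1}$, which is integrable since $\alpha=1-\tfrac1p>0$; hence $|a|_{L^\infty(\TT)}\le C_M$.

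For the Hölder seminorm I would follow the dyadic inner/outer splitting already used for Propositions~\ref{prop:F_L_holder} and~\ref{prop:F_holder}. Writing $\Delta_\delta a(\xi)=\int_{\TT}\bigl(R(\xi+\delta,\eta)-R(\xi,\eta)\bigr)\,d\eta$, split the $\eta$–integral into $\{|\xi-\eta|<2\delta\}$ and $\{|\xi-\eta|\ge2\delta\}$. On the inner region one bounds $R(\xi+\delta,\eta)$ and $R(\xi,\eta)$ separately by the pointwise estimate above, obtaining a contribution $\le C_M\int_{|\xi-\eta|<3\delta}|\xi-\eta|^{2\alpha-1}\,d\eta\le C_M\delta^{2\alpha}\le C_M\delta^{\alpha}$. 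On the outer region $R(\cdot,\eta)$ is a.e. differentiable, and differentiating in $\xi$ (using the Frenet relation $\p_\xi\N=\k g\T$ from \eqref{Neq1222}) produces three groups of terms; with \eqref{eq:building_blocks_a}, \eqref{eq:building_blocks_c}, \eqref{eq:building_blocks_d}, \eqref{eq:building_blocks_f} and \eqref{eq:building_blocks_linear} one checks the bound
$$\bigl|\p_\xi R(\xi,\eta)\bigr|\le C_M\Bigl(|\k(\xi)|\,|\xi-\eta|^{\alpha-1}+|\xi-\eta|^{2\alpha-2}\Bigr).$$
Then the fundamental theorem of calculus together with $|\xi'-\eta|\sim|\xi-\eta|$ for $\xi'\in[\xi,\xi+\delta]$ (valid because $|\xi-\eta|\ge2\delta$) and Hölder's inequality $\int_{\xi}^{\xi+\delta}|\k|\le|\k|_{L^p(\TT)}\delta^{1-1/p}\le C_M\delta^{\alpha}$ yields $|R(\xi+\delta,\eta)-R(\xi,\eta)|\le C_M\bigl(\delta^{\alpha}|\xi-\eta|^{\alpha-1}+\delta\,|\xi-\eta|^{2\alpha-2}\bigr)$. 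Integrating over $|\xi-\eta|\ge2\delta$ gives $C_M\delta^{\alpha}$ from the first term (since $|\xi-\eta|^{\alpha-1}\in L^1$) and $C_M\delta^{\alpha}$ from the second in all cases $\alpha\lessgtr\tfrac12$ (a $\log$ at $\alpha=\tfrac12$ is harmless). Combining the two regions and the trivial $L^\infty$ bound for $|\delta|\gtrsim1$ gives $[a]_{C^\alpha(\TT)}\le C_M$, hence $|a|_{C^\alpha(\TT)}\le C_M$.

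I expect the only mildly delicate point, rather than a genuine obstacle, to be the computation of $\p_\xi R$ and the verification that the curvature factor $|\k(\xi)|$ generated by $\p_\xi\N$ must be handled via Hölder's inequality — which returns precisely the $\delta^{\alpha}$ modulus we are after — and not via the maximal function $\mathcal{M}\k$ (as was legitimate in Propositions~\ref{prop:F_L_holder}–\ref{prop:F_holder}, where only an $L^p$ bound was needed), since here we need a genuine $L^\infty$/$C^\alpha$ estimate. For $1<p<\infty$ one may alternatively bypass the kernel computation entirely: Proposition~\ref{prop:derivatives_v} gives $v\in W^{2,p}(\TT)$ with $|v|_{W^{2,p}}\le C_M$, so $\p_\xi v\in W^{1,p}(\TT)\hookrightarrow C^{\alpha}(\TT)$, while $\g\in B^M_p$ and $g=|\dot\g|\ge M^{-1}$ give $g^{-1},\T\in C^{\alpha}(\TT)$ with norms $\le C_M$; then $a=-(g^{-1}\p_\xi v)\cdot\T$ is a product of $C^{\alpha}$ functions. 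The kernel argument above is the one to record, however, since it also covers $p=\infty$, where $v$ need not be $W^{2,\infty}$.
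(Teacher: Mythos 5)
Your proposal is correct, and in fact more complete than what the paper offers. The paper does not give a proof of Lemma~\ref{eq:lemma:Holder_multiplicative_a} at all; it merely states, just before the lemma, that the coefficient $a$ is H\"older by Proposition~\ref{prop:derivatives_v}. That is exactly the shortcut you mention at the very end of your write-up: $v\in W^{2,p}(\TT)$ with $|v|_{W^{2,p}}\le C_M$, hence $\p_\xi v\in W^{1,p}\hookrightarrow C^{\alpha}$, and $a=-(g^{-1}\p_\xi v)\cdot\T$ is a product of $C^\alpha$ functions. Your main argument, by contrast, estimates the kernel $R(\xi,\eta)=\T(\eta)\cdot\N(\xi)\,\frac{(\gamma(\xi)-\gamma(\eta))\cdot\N(\xi)}{|\gamma(\xi)-\gamma(\eta)|^{2}}\,g(\eta)$ directly from Lemma~\ref{lemma:simplification} and the building-block bounds of Lemma~\ref{lemma:building_blocks}, via the same inner/outer splitting used in Propositions~\ref{prop:F_L_holder}--\ref{prop:F_holder}. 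I checked the kernel bound $|R|\le C_M|\xi-\eta|^{2\alpha-1}$, the three Frenet terms in $\p_\xi R$ and the resulting estimate $|\p_\xi R|\le C_M(|\k(\xi)||\xi-\eta|^{\alpha-1}+|\xi-\eta|^{2\alpha-2})$, the H\"older step $\int_{\xi}^{\xi+\delta}|\k|\le C_M\delta^\alpha$, and the final integrations (including the marginal case $\alpha=\tfrac12$): all of this is correct, and the contributions are $\delta^{2\alpha}$ (inner) and $\delta^\alpha+\min(\delta,\delta^{2\alpha})$ (outer), each bounded by $C_M\delta^\alpha$.

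The genuine added value of your route is the $p=\infty$ case. The lemma is stated for $1<p\le\infty$, but Proposition~\ref{prop:derivatives_v} is proved only for $1<p<\infty$, so the reference the paper gives does not literally cover the endpoint. Your kernel argument does: at $p=\infty$ ($\alpha=1$) every step goes through with $|\k|_{L^\infty}\le C_M$. You also correctly pinpoint the structural reason this works: $\p_s v\cdot\T$ is precisely the component of $\p_s v$ that, by the divergence-theorem identity of Lemma~\ref{lemma:simplification}, has no principal-value kernel, so there is no Hilbert-transform contribution to obstruct an $L^\infty$/$C^\alpha$ bound, unlike $\p_s v\cdot\N$ which governs the curvature equation and is genuinely only $\BMO$-type at $p=\infty$. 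A small cautionary note: the different Frenet sign convention $\p_s\T=-\k\N$, $\p_s\N=\k\T$ in \eqref{Neq1222} must be respected when computing $\p_\xi R$ (you used $\p_\xi\N=\k g\T$, which is consistent), and you should make explicit that the constant $C_M$ in \eqref{eq:F_N_aux1}-type bounds does not degenerate as $p\to\infty$; it does not, since $\alpha=1-\tfrac1p$ is bounded away from $0$ once $p\ge 2$ and the torus is compact.
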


\subsection{A commutator estimate}

The last ingredient we need for the $C^2$ illposedness  is
\begin{lemma}\label{lemma:H_holder_commutator}
Let $1<p <  \infty$ and $\frac{1}{p} < \sigma \leq 1$. Suppose that $f  \in L^p(\TT)$ and $  h \in C^\sigma(\TT)$. Then the commutator satisfies the estimate
$$
| [\mathcal{H}, h] f |_{C^\beta(\TT) } \leq C (p,\sigma) |h|_{C^\sigma (\TT)} |f |_{L^p (\TT) }
$$
for $ \beta=\sigma -\frac{1}{p}$.
\end{lemma}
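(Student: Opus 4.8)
The plan is to prove the estimate directly from the kernel representation of the commutator; one could also deduce it from general Calder\'on commutator / paraproduct bounds, but on $\TT$ with these precise exponents a hands-on argument is cleaner and self-contained. Writing $[\mathcal{H},h]f=\mathcal{H}(hf)-h\,\mathcal{H}f$ in kernel form and setting
\[g(\xi):=[\mathcal{H},h]f(\xi)=\frac{1}{2\pi}\int_{\TT}\bigl(h(\eta)-h(\xi)\bigr)f(\eta)\cot\!\Bigl(\frac{\xi-\eta}{2}\Bigr)\,d\eta,\]
I would first note that $|h(\eta)-h(\xi)|\le|h|_{C^\sigma}|\xi-\eta|^\sigma$ and $|\cot(x/2)|\le C|x|^{-1}$ on $\TT$ make the integrand $\le C|h|_{C^\sigma}|\xi-\eta|^{\sigma-1}|f(\eta)|$, which is locally integrable because $\sigma-1>-1$; so the principal value is unnecessary. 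H\"older's inequality with the conjugate exponent $p'$ then gives
\[|g(\xi)|\le C|h|_{C^\sigma}|f|_{L^p}\Bigl(\int_{\TT}|\xi-\eta|^{(\sigma-1)p'}\,d\eta\Bigr)^{1/p'},\]
and the last integral is finite precisely when $(\sigma-1)p'>-1$, i.e. $\sigma>1/p$, which is the hypothesis; this is the $L^\infty$ half of the claim.

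For the H\"older seminorm I would fix small $\delta>0$, write $\Delta_\delta g(\xi)=g(\xi+\delta)-g(\xi)$ as the integral against $f(\eta)$ of the double difference
\[D(\xi,\eta):=\bigl(h(\eta)-h(\xi+\delta)\bigr)\cot\!\Bigl(\tfrac{\xi+\delta-\eta}{2}\Bigr)-\bigl(h(\eta)-h(\xi)\bigr)\cot\!\Bigl(\tfrac{\xi-\eta}{2}\Bigr),\]
and split the $\eta$-integral into the near region $|\xi-\eta|<2\delta$ and the far region $|\xi-\eta|\ge2\delta$, exactly as in the proofs of Propositions \ref{prop:F_L_holder} and \ref{prop:F_holder}. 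In the near region I would bound the two pieces of $D$ separately by absolute values, using $|h(\eta)-h(\xi)|\le C|h|_{C^\sigma}|\xi-\eta|^\sigma$ and its $\delta$-shifted analogue against $|\cot|\le C|\cdot|^{-1}$, so each piece is $\le C|h|_{C^\sigma}|\cdot|^{\sigma-1}|f(\eta)|$ over an interval of length $O(\delta)$; H\"older's inequality (again using $(\sigma-1)p'>-1$) then produces the bound $C|h|_{C^\sigma}|f|_{L^p}\,\delta^{(\sigma-1)+1/p'}=C|h|_{C^\sigma}|f|_{L^p}\,\delta^\beta$.

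In the far region I would regroup
\[D(\xi,\eta)=\bigl(h(\eta)-h(\xi)\bigr)\Bigl[\cot\!\bigl(\tfrac{\xi+\delta-\eta}{2}\bigr)-\cot\!\bigl(\tfrac{\xi-\eta}{2}\bigr)\Bigr]+\bigl(h(\xi)-h(\xi+\delta)\bigr)\cot\!\bigl(\tfrac{\xi+\delta-\eta}{2}\bigr),\]
and bound the bracket by the mean value theorem: since $|\xi-\eta|\ge2\delta$, the whole segment $[\xi,\xi+\delta]$ stays at distance $\ge|\xi-\eta|/2$ from $\eta$, so $\bigl|\cot(\tfrac{\xi+\delta-\eta}{2})-\cot(\tfrac{\xi-\eta}{2})\bigr|\le C\delta|\xi-\eta|^{-2}$; hence the first term of $D$ is $\le C|h|_{C^\sigma}\delta|\xi-\eta|^{\sigma-2}$ and the second is $\le C|h|_{C^\sigma}\delta^\sigma|\xi-\eta|^{-1}$. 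Integrating each against $|f(\eta)|$ over $|\xi-\eta|\ge2\delta$ via H\"older's inequality, the tail integrals of $|\xi-\eta|^{(\sigma-2)p'}$ and $|\xi-\eta|^{-p'}$ are each dominated by their lower endpoint $\sim\delta$ (both exponents lie below $-1$ since $p'>1$), yielding $\delta\cdot\delta^{(\sigma-2)+1/p'}=\delta^\beta$ and $\delta^\sigma\cdot\delta^{1/p'-1}=\delta^\beta$ respectively. Putting the two regions together gives $\sup_\xi|\Delta_\delta g(\xi)|\le C(p,\sigma)|h|_{C^\sigma}|f|_{L^p}\,\delta^\beta$ for all small $\delta$, which with the $L^\infty$ bound is the assertion. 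The one place that genuinely needs care is the exponent bookkeeping in the far region: the regrouping of $D$ must isolate a term whose kernel difference is honestly $O(\delta|\xi-\eta|^{-2})$, and the conjugate-exponent arithmetic must be checked to close to exactly $\beta=\sigma-1/p$ rather than a weaker exponent; everything else repeats the finite-difference technique already used twice in Section \ref{sec:illposedness}.
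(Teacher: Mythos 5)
Your proof is correct and follows essentially the same route as the paper's: identical kernel representation, the same near/far split at $|\xi-\eta|=2\delta$, the same regrouping of the double difference in the far region into a kernel increment times $h(\eta)-h(\xi)$ plus an $h$-increment times the shifted kernel, and the same mean-value and H\"older-inequality bounds. The exponent bookkeeping you flag at the end checks out, and matches what the paper does.
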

\begin{proof}
This follows from a standard computation in PDE and we sketch the details here.
Denote by $H(\xi) $ the kernel of the periodic Hilbert transform. We have
\begin{align*}
[\mathcal{H}, h] f(\xi) = \int_{\TT} \left[ h (  \eta)  - h (\xi)\right]f(\eta) H(\xi - \eta) \, d\eta  .
\end{align*}
Since  $H(\xi - \eta)\sim  (\xi -\eta)^{-1} $ for small $|\xi -\eta|>0 $ and $|h (  \eta)  - h (\xi) |\leq  |h|_{C^\sigma }|\xi - \eta|^{\sigma}$, we have $|[\mathcal{H}, h] f|_{L^\infty} \lesssim |f|_{L^p}|h|_{C^\sigma} $.

Next, we show the H\"older continuity with exponent $\beta= \sigma -\frac{1}{p}$. Denote by $\Delta_\delta$ the difference operator of spacing $\delta$, namely $\Delta_\delta f = f(\xi + \delta) - f(\xi ) $. Without loss of generality we assume $\delta>0$ and aim to prove $ |\Delta_\delta[\mathcal{H}, h] f| \lesssim \delta ^{\beta} |h|_{C^\sigma  } |f |_{L^p  } $. We split the integral $\Delta_\delta[\mathcal{H}, h] f$ in the following way:
\begin{align*}
\Delta_\delta[\mathcal{H}, h] f  &= \int_{\TT}  \left[ \Big( h (\eta )  - h (\xi+\delta ) \Big)  H(\xi -\eta+\delta )   -  \Big(h (\eta)  - h (\xi)  \Big)  H(\xi  -\eta)\right] f(\eta)  \, d\eta
\\
&:= I_1 +I_2
\end{align*}
where $I_1$ is the integral on $|\xi -\eta|<   2\delta $ and $I_2$ on $|\xi -\eta| \geq  2\delta $.

\noindent
\textbf{Case 1: $|\xi -\eta|<  2\delta $}

In this case, we first use the $\sigma$-H\"older continuity of $h$ to obtain that
\begin{align*}
I_1 & \leq \left| \int_{|\xi -\eta|<  2\delta }  \left[ \Big( h (\eta )  - h (\xi+\delta ) \Big)  H(\xi -\eta+\delta )   -  \Big(h (\eta)  - h (\xi)  \Big)  H(\xi  -\eta)\right] f(\eta)  \, d\eta \right| \\
&\leq  |h|_{C^\sigma}\int_{|\xi -\eta|<  2\delta} \left(  |\xi -\eta  + \delta |^{\sigma -1}  + |\xi -\eta|^{\sigma -1}\right) |f(\eta )| \, d\eta .
\end{align*}
Since $-1 <\sigma-1 \leq 0$, a direct computation using the H\"older inequality and the bound $|\xi -\eta|<  2\delta$ gives
\begin{align*}
I_1
& \lesssim   |h|_{C^\sigma} |f  |_{L^p} \left[ \int_{|\xi -\eta|<  4\delta}   |\xi -\eta|^{(\frac{p}{p-1})(\sigma -1)}   \, d\eta \right]^{\frac{p-1}{p}}.
\end{align*}
Since $(\frac{p}{p-1})  \sigma -1  >  -1$, we can evaluate the integral and obtain that
\begin{align*}
I_1 & \lesssim  |h|_{C^\sigma} |f  |_{L^p} \delta^{\sigma -1+ \frac{p-1}{p }} = |h|_{C^\sigma} |f  |_{L^p} \delta^{\beta} .
\end{align*}

\noindent
\textbf{Case 2: $|\xi -\eta| \geq  2\delta$}

In this case, we first rearrange the terms as follows.
\begin{align*}
I_2 & \leq  \Big| \int_{|\xi -\eta| \geq  2\delta}  \Big[   \big( h (\eta )- h(\xi ) \big)\big( H(\xi -\eta+\delta )    -  H(\xi -\eta  )  \big)\\
& \qquad +  \big(h (\xi)  - h (\xi+\delta)  \big)  H(\xi -\eta+\delta )  \Big] f(\eta)  \, d\eta \Big|  .
\end{align*}
Recall that for the kernel $H(\xi) = \frac{1}{2\pi} \cot(\frac{\xi}{2}) $, we have for all $|\xi -\eta| \geq  2\delta$ the bounds
\begin{equation}
\begin{aligned}\label{eq:lemma_commutator_H}
\left|H(\xi -\eta+\delta )    -  H(\xi -\eta  ) \right|  &\lesssim \frac{\delta}{|\xi -  \eta|^2}\\
\left|H(\xi -\eta+\delta )   \right|  &\lesssim \frac{1}{|\xi -  \eta| } .
\end{aligned}
\end{equation}
It then follows from \eqref{eq:lemma_commutator_H} and the $\sigma$-H\"older continuity of $h$ that
\begin{align*}
I_2 & \lesssim |h|_{C^\sigma}\left( \delta \int_{|\xi -\eta| \geq  2\delta} |\xi -\eta|^{\sigma-2}  f(\eta) \, d\eta  + \delta^{\sigma }\int_{|\xi -\eta| \geq  2\delta} |\xi -\eta|^{-1}  f(\eta)\, d\eta  \right)  \\
& \lesssim |h|_{C^\sigma} \delta^{\sigma }\int_{|\xi -\eta| \geq  2\delta} |\xi -\eta|^{-1}  f(\eta)\, d\eta   .
\end{align*}
From here we can apply H\"older's inequality (since $p<\infty $) and use the condition $\beta= \sigma - \frac{1}{p} $ to conclude that
\begin{align*}
I_2 & \lesssim |h|_{C^\sigma} |f  |_{L^p}  \delta^{\sigma }  \Big( \int_{|\xi -\eta|\geq 2\delta} |\xi -\eta |^{-{\frac{p}{p-1} }} \Big)^\frac{p-1}{p}= |h|_{C^\sigma} |f  |_{L^p} \delta^\beta .
\end{align*}
\end{proof}

\subsection{Proof of \texorpdfstring{$C^2$}{C2}  illposedness}
In what follows, we restrict ourselves to the case $4 \leq  p \leq \infty$ (away from $p = 2$) so that there exists a fixed small $\beta>0$ with the following property.

In the remainder of this section, we denote by $ C(M,T)$ a large constant depending only on $M$ and $T$ but not $p$ that may change from line to line.

\begin{lemma}\label{lemma:uniform_illposed}
Fix $\beta = \frac{1}{2}$. For any $M>1$ and $T>0$, there is a large constant $C=C(M,T)$ depending only on $M$ and $T$ such that for any $\g \in C([0,T];B^M_4 ) $ the uniform H\"older estimates hold
\begin{align}\label{eq:uniform_illposed_1}
\sup_{t\in [0,T]} | F |_{C^\beta (\TT)} + \sup_{t\in [0,T]} | a |_{C^\beta (\TT)} \leq   C(M ,T ),
\end{align}
and
\begin{align}\label{eq:uniform_illposed_2}
\sup_{t\in [0,T]} | [  \mathcal{H}, e^{-\int_0^t a}] \k |_{C^\beta (\TT)} \leq C(M,T).
\end{align}
\end{lemma}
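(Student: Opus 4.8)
The plan is to reduce \eqref{eq:uniform_illposed_1} and \eqref{eq:uniform_illposed_2} to the H\"older estimates already proven in Propositions \ref{prop:F_L_holder} and \ref{prop:F_holder}, Lemma \ref{eq:lemma:Holder_multiplicative_a}, and the commutator Lemma \ref{lemma:H_holder_commutator}, by freezing the exponent at $p=4$. This is precisely why the statement is phrased over $B^M_4$: once $p$ is fixed at $4$, the constants $C_M$ in those results (which a priori depend on $M$ \emph{and} $p$) and the constant $C(p,\sigma)$ in Lemma \ref{lemma:H_holder_commutator} become constants depending only on $M$, hence uniform in the large parameter $p$ that enters the illposedness construction later --- indeed any $\g\in B^M_p$ with $p\ge 4$ lies in $B^{c_0M}_4$ for an absolute $c_0$ by H\"older's inequality on $\TT$, so the reduction loses nothing downstream.

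For \eqref{eq:uniform_illposed_1}: since $\g\in C([0,T];B^M_4)$ and $4>2>\tfrac{3}{2}$, I would apply Proposition \ref{prop:F_L_holder} with $p=4$ (so its exponent $1-\tfrac{2}{p}$ equals $\tfrac{1}{2}$), Proposition \ref{prop:F_holder} with $p=4$ (its exponent $\alpha=1-\tfrac{1}{p}=\tfrac{3}{4}$), and Lemma \ref{eq:lemma:Holder_multiplicative_a} with $p=4$ (again $\alpha=\tfrac{3}{4}$), obtaining $\sup_t|F_L|_{C^{1/2}(\TT)}\le C_M$, $\sup_t|F_N|_{C^{3/4}(\TT)}\le C_M$, and $\sup_t|a|_{C^{3/4}(\TT)}\le C_M$ with $C_M=C(M)$. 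Since $F=F_L+F_N$ and $C^{3/4}(\TT)\hookrightarrow C^{1/2}(\TT)$ continuously, this gives $\sup_t|F|_{C^{1/2}}+\sup_t|a|_{C^{1/2}}\le C(M)\le C(M,T)$, which is \eqref{eq:uniform_illposed_1} with $\beta=\tfrac{1}{2}$.

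For \eqref{eq:uniform_illposed_2}: fix $t\in[0,T]$ and set $b_t:=\int_0^t a(\cdot,t')\,dt'$ and $h_t:=e^{-b_t}$. From the previous step and Minkowski's integral inequality, $|b_t|_{C^{3/4}(\TT)}\le\int_0^t|a(\cdot,t')|_{C^{3/4}}\,dt'\le TC_M$ and $|b_t|_{L^\infty}\le TC_M$; the mean value theorem applied to $\exp$ then gives the elementary composition estimate $|h_t|_{C^{3/4}(\TT)}\le e^{|b_t|_{L^\infty}}\bigl(1+|b_t|_{C^{3/4}}\bigr)\le C(M,T)$. I also use that $\g(t)\in B^M_4$ forces $\k(\cdot,t)\in L^4(\TT)$ with $|\k(\cdot,t)|_{L^4}\le C_M$ (as noted in the proof of Lemma \ref{lemma:building_blocks}, from $|g|_*\le M$ and $|\ddot\g|_{L^4}\le M$). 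Then Lemma \ref{lemma:H_holder_commutator} with $f=\k(\cdot,t)$, $h=h_t$, $p=4$, $\sigma=\tfrac{3}{4}$ --- so that $\beta=\sigma-\tfrac{1}{p}=\tfrac{1}{2}$ --- yields
\[
|[\mathcal{H},h_t]\,\k(\cdot,t)|_{C^{1/2}(\TT)}\le C(4,\tfrac{3}{4})\,|h_t|_{C^{3/4}(\TT)}\,|\k(\cdot,t)|_{L^4(\TT)}\le C(M,T),
\]
and taking the supremum over $t\in[0,T]$ gives \eqref{eq:uniform_illposed_2}.

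There is no genuine analytic difficulty here; the work is bookkeeping of constants. The one point that needs care is that every cited H\"older estimate carries a $p$-dependent constant, so the reduction to the fixed exponent $p=4$ --- and the observation that it is harmless --- is exactly what makes the bounds uniform in $p$. The only new (and trivial) ingredient is the composition bound for $e^{-\int_0^t a}$, through which the $T$-dependence of the final constant enters, via $\int_0^t|a(\cdot,t')|_{C^{3/4}}\,dt'\le TC_M$.
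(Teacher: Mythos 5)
Your proposal is correct and follows essentially the same route as the paper: fix $p=4$ so that $1-\tfrac{2}{p}=\tfrac12$ for the $F_L$ bound and $\alpha=\tfrac34$ for $F_N$, $a$, and then invoke the commutator lemma with $\sigma=\tfrac34$, $p=4$ to land at $\beta=\tfrac12$. The extra details you supply (Minkowski for $\int_0^t a$ and the composition bound for $e^{-\int_0^t a}$) are exactly what the paper compresses into the remark that $x\mapsto e^{-x}$ is smooth.
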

\begin{proof}
Since $1 -\frac{2}{4} =  \frac{1}{2}$, \eqref{eq:uniform_illposed_1} follows directly from Propositions \ref{prop:F_L_holder}, \ref{prop:F_holder} and Lemmas \ref{eq:lemma:Holder_multiplicative_a}, \ref{lemma:H_holder_commutator}.

Lemma \ref{lemma:H_holder_commutator} with $\sigma = 3/4$ and $p=4$ implies that
\begin{align*}
| [  \mathcal{H}, e^{-\int_0^t a}] \k |_{C^\frac{1}{2} (\TT)} & \leq C |\k |_{L^4  (\TT)}   |e^{-\int_0^t a} |_{C^\frac{3}{4}   (\TT)}\\
& \leq C (M, T),
\end{align*}
where  we have used $ a = \p_s v \cdot \T \in C([0,T];C^{  \frac{3}{4}} (\TT ))  $ with $|a|_{C^{\frac{3}{4}}(\TT)} \leq C_M$ due to $\g \in C([0,T];B^M_4 ) $ and also $x \mapsto e^{-x}$ is smooth.

\end{proof}

With Lemma \ref{lemma:uniform_illposed}, we are in position to prove $C^2$ illposedness. Define a new variable using a variation of parameters
\begin{equation}\label{eq:def_K}
K (\xi,t)= e^{-\int_0^t a(\xi,\tau) d\tau} \k(\xi,t),
\end{equation}
then by \eqref{eq:evo_curvature_illposedness} it satisfies the equation
$$
\p_t K =   \pi\mathcal{H}K -  \pi[  \mathcal{H}, e^{-\int_0^t a}]   \k + \widetilde{F}     ,
$$
where $\widetilde{F}: = e^{-\int_0^t a} F $ and $[A,B]:= AB -BA$ denote the commutator.

By Duhamel's principle, we recast the equation for $ K  $ into the integral form
\begin{equation}\label{eq:Duhamel_for_K}
K(t) = e^{t \pi \mathcal{H} } \k_0 + \int_0^t e^{(t-t') \pi\mathcal{H} } \left(-\pi[  \mathcal{H}, e^{-\int_0^{t'} a}]   \k + \widetilde{F} \right) \, dt'   ,
\end{equation}
By Lemma \ref{lemma:uniform_illposed}, for any $M>1$ and $T>0$ we have the following uniform bound for any $p\geq 4 $,
$$
|K (t)|_{L^p(\TT)}  \leq c(M,T) | \k (t)|_{L^p(\TT)} \quad \text{for all $t\in [0,T]$}.
$$
So it suffices to show the inflation for $K$ as $p \to \infty$.

For some suitable initial data $\g_0 \in C^2(\TT)$ with $\k_0 \in C(\TT)$, we will see that the linear dispersion yields $| e^{ t \pi \mathcal{H} } K_0 |_{L^p(\TT)} \to \infty $ as $p\to \infty$. So the task is to verify the integral terms in \eqref{eq:Duhamel_for_K} are well-controlled. 
\begin{lemma}\label{lemma:nonlinear_for_K}
For $M>1$ and any $T>0$, there exists a constant $C(M,T)>0$ such that for any $\g  \in C([0,T];B^M_p ) $, $4 \leq p \leq \infty$, we have
$$
\sup_{t\in[0,T]}\left| \int_0^t e^{(t-t') \pi\mathcal{H} } \left(-\pi[  \mathcal{H}, e^{-\int_0^{t'} a}]   \k + \widetilde{F} \right) \, d t'  \right|_{C^{1/2}(\TT) } \leq C(M,T) .
$$

\end{lemma}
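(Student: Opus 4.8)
The plan is to view the Duhamel integral as a $C^{1/2}(\TT)$-valued integral whose integrand has $C^{1/2}$-norm bounded uniformly in $t'\in[0,T]$ and in $p\in[4,\infty]$, after which the claim follows from Minkowski's integral inequality. Three inputs are needed: (i) $e^{s\pi\mathcal H}$ is bounded on $C^{1/2}(\TT)$ uniformly in $s\in\RR$; (ii) $\sup_{t'\in[0,T]}|[\mathcal H,e^{-\int_0^{t'}a}]\k|_{C^{1/2}(\TT)}\le C(M,T)$; (iii) $\sup_{t'\in[0,T]}|\widetilde F|_{C^{1/2}(\TT)}\le C(M,T)$, all with constants independent of $p$. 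For (i) we use the explicit formula \eqref{eq:etH}, $e^{s\pi\mathcal H}=\cos(\pi s)\Id+\sin(\pi s)\mathcal H$, together with the classical boundedness of the periodic Hilbert transform on $C^\alpha(\TT)$ for $0<\alpha<1$; since $|\cos(\pi s)|,|\sin(\pi s)|\le 1$, this gives $\|e^{s\pi\mathcal H}\|_{C^{1/2}\to C^{1/2}}\le 1+\|\mathcal H\|_{C^{1/2}\to C^{1/2}}=:C_0$, an absolute constant.

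For (ii) and (iii) the key observation is that everything reduces to Lemma~\ref{lemma:uniform_illposed}, which is stated for $\g\in C([0,T];B^M_4)$. Since $4\le p\le\infty$ and $\TT$ has finite measure, $W^{2,p}(\TT)\hookrightarrow W^{2,4}(\TT)$ with embedding constant at most $(2\pi)^{1/4}$, while the constraints $|g|_*\le M$ and $\Gamma\le M$ in the definition of $B^M_p$ do not involve $p$; hence there is an absolute constant $C_*$ with $B^M_p\subset B^{C_*M}_4$, so any $\g\in C([0,T];B^M_p)$ also lies in $C([0,T];B^{C_*M}_4)$. Then (ii) is exactly \eqref{eq:uniform_illposed_2} with $M$ replaced by $C_*M$. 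For (iii), \eqref{eq:uniform_illposed_1} gives $\sup_{t'}|F|_{C^{1/2}}\le C(M,T)$ and $\sup_{t'}|a|_{C^{1/2}}\le C(M,T)$ (recall $C^\alpha(\TT)\hookrightarrow C^{1/2}(\TT)$ for $\alpha\ge\frac12$, which covers the Hölder exponents $1-\frac2p,\,1-\frac1p$ from Propositions~\ref{prop:F_L_holder}--\ref{prop:F_holder} and Lemma~\ref{eq:lemma:Holder_multiplicative_a} when $p\ge4$); combining the latter with the elementary bounds $|\int_0^{t'}a(\cdot,\tau)\,d\tau|_{C^{1/2}}\le T\sup_\tau|a(\cdot,\tau)|_{C^{1/2}}$, $|e^{-h}|_{C^{1/2}}\le e^{|h|_{L^\infty}}(1+|h|_{C^{1/2}})$ for $h\in C^{1/2}(\TT)$ (immediate from the fundamental theorem of calculus applied to $x\mapsto e^{-x}$), and the Banach-algebra property of $C^{1/2}(\TT)$ under pointwise multiplication, we obtain $|\widetilde F|_{C^{1/2}}=|e^{-\int_0^{t'}a}F|_{C^{1/2}}\le C(M,T)$, uniformly in $t'$ and $p$.

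Finally we assemble the estimate. For each $t\in[0,T]$ the map $t'\mapsto e^{(t-t')\pi\mathcal H}\bigl(-\pi[\mathcal H,e^{-\int_0^{t'}a}]\k+\widetilde F\bigr)$ takes values in $C^{1/2}(\TT)$, depends measurably (indeed continuously) on $t'$ since $\k\in C([0,T];L^p)$ and $a\in C([0,T];C^\alpha)$, and by (i)--(iii) has $C^{1/2}$-norm at most $C_0(\pi+1)C(M,T)$ for all $t'\in[0,T]$. Minkowski's integral inequality then yields
\[
\Bigl|\int_0^t e^{(t-t')\pi\mathcal H}\bigl(-\pi[\mathcal H,e^{-\int_0^{t'}a}]\k+\widetilde F\bigr)\,dt'\Bigr|_{C^{1/2}(\TT)}\le \int_0^t C_0(\pi+1)C(M,T)\,dt'\le C(M,T),
\]
which is the assertion. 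Essentially all the analytic work is already contained in Lemma~\ref{lemma:uniform_illposed} (and the commutator Lemma~\ref{lemma:H_holder_commutator} behind it); the only points requiring care here are the uniform-in-$p$ reduction $B^M_p\subset B^{C_*M}_4$ and the fact that the evolution group and the commutator/nonlinear terms remain bounded on the Hölder scale, so there is no genuine obstacle beyond bookkeeping.
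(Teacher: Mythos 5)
Your proof is correct and follows essentially the same route as the paper: bound the Duhamel integral by $T$ times the supremum of the $C^{1/2}$-norm of the integrand, using the uniform boundedness of $e^{s\pi\mathcal H}$ on $C^{1/2}(\TT)$ (via \eqref{eq:etH}) and then Lemma~\ref{lemma:uniform_illposed}. You are somewhat more careful than the paper on two bookkeeping points the paper leaves implicit — the inclusion $B^M_p\subset B^{C_*M}_4$ for $p\geq 4$ (needed to invoke Lemma~\ref{lemma:uniform_illposed}, which is stated for $p=4$) and the Banach-algebra estimate passing from $|F|_{C^{1/2}},|a|_{C^{1/2}}$ to $|\widetilde F|_{C^{1/2}}$ — but these are elaborations rather than a different argument.
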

\begin{proof}
Observe that
\begin{align*}
  \left| \int_0^t e^{(t -t')  \pi \mathcal{H} }  \left(  -\pi [  \mathcal{H}, e^{-\int_0^{t'} a}]   \k + \widetilde{F} \right) \, d t'  \right|_{  C^{1/2} (\TT) }
 \leq & T \sup_t \left|      [  \mathcal{H}, e^{-\int_0^t a}]   \k  \right|_{  C^{1/2} (\TT) } \\
& + T \sup_t|  \widetilde{F}     |_{  C^{1/2}(\TT) },
\end{align*}
where in the first step we have used that the free evolution $e^{  t   \pi \mathcal{H} }  $ is bounded on  $C^\sigma (\TT)$ for any $ 0 <\sigma <1$ by \eqref{eq:etH}.

Then by \eqref{eq:uniform_illposed_1} and \eqref{eq:uniform_illposed_2} we have
\begin{align*}
\sup_{t\in[0,T]} \left| \int_0^t e^{(t -t')  \pi\mathcal{H} } \left(-\pi [  \mathcal{H}, e^{-\int_0^{t'} a}]   \k + \widetilde{F} \right) \, dt' \right|_{C^{1/2}(\TT) }  &\leq C(M,T) .
\end{align*}

\end{proof}

Then we choose the initial configuration such that the free linear evolution inflates, or more precisely $| e^{t \pi \mathcal{H} } K_0 |_{L^p(\TT)} \to \infty $ as $p\to \infty$.
\begin{lemma}\label{lemma:linear_for_K}
There exists initial data $\g_0 \in  C^2(\TT)$ which is a simple closed curve with curvature $\k_0 \in C(\TT)$ such that the following holds. For any   $t\in ( 0,1)$,
$$
\left| e^{t  \pi\mathcal{H} } \k_0  \right|_{L^p (\TT)} \geq C_0 \max\{t(1-t) \sqrt{p}, 1 \}  \quad \text{for all $p < \infty $  }
$$
where $C_0>0$ is a constant depending only on the initial data $\g_0$.
\end{lemma}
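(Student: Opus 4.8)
The plan is to read everything off from the formula $e^{t\pi\mathcal{H}}=\cos(\pi t)\,\Id+\sin(\pi t)\,\mathcal{H}$ in \eqref{eq:etH}, together with two inputs: a bounded continuous $\k_0$ whose conjugate $\mathcal{H}\k_0$ has a $(\log\tfrac{1}{|\xi|})^{1/2}$ blow-up at one point (so that $|\mathcal{H}\k_0|_{L^p(\TT)}\gtrsim\sqrt p$ for large $p$), and the fact that $e^{t\pi\mathcal{H}}$ is unitary on $L^2(\TT)$ (because $\mathcal{H}$ is skew-adjoint there). Granting these, I will estimate
\[
|e^{t\pi\mathcal{H}}\k_0|_{L^p(\TT)}\;\ge\;\sin(\pi t)\,|\mathcal{H}\k_0|_{L^p(\TT)}-|\k_0|_{L^p(\TT)}\;\ge\; c\,\sin(\pi t)\,\sqrt p - C\,\|\k_0\|_{L^\infty(\TT)},
\]
and, using $\sin(\pi t)\ge\pi t(1-t)\ge 3t(1-t)$ on $[0,1]$, deduce $|e^{t\pi\mathcal{H}}\k_0|_{L^p(\TT)}\ge C_0\,t(1-t)\sqrt p$ whenever $t(1-t)\sqrt p\ge A(\k_0)$, with $C_0,A$ depending only on $\k_0$. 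In the complementary range $t(1-t)\sqrt p\le A(\k_0)$ I will instead use unitarity: $|e^{t\pi\mathcal{H}}\k_0|_{L^p(\TT)}\ge(2\pi)^{-1/2}|e^{t\pi\mathcal{H}}\k_0|_{L^2(\TT)}=(2\pi)^{-1/2}|\k_0|_{L^2(\TT)}>0$ for $p\ge2$, which, after shrinking $C_0$, dominates $C_0\max\{t(1-t)\sqrt p,1\}$ in that range. (For $1<p<2$, not needed in the sequel, one uses the continuity and strict positivity of $t\mapsto|e^{t\pi\mathcal{H}}\k_0|_{L^p(\TT)}$, the latter because $\mathcal{H}$ has no real eigenfunctions.) Thus it remains to construct $\k_0$ and to realize it as the curvature of a simple closed $C^2$ curve.

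For the construction, I will take
\[
\k^\circ(\xi):=\sum_{n\ge2}\frac{\sin(n\xi)}{n\sqrt{\log n}} .
\]
Its coefficients decrease to $0$ and $n\cdot\tfrac1{n\sqrt{\log n}}=\tfrac1{\sqrt{\log n}}\to0$, so by the classical Chaundy--Jolliffe criterion the series converges uniformly; hence $\k^\circ\in C(\TT)$ and $\k^\circ$ is bounded. Its conjugate is, up to sign, $\mathcal{H}\k^\circ=-\sum_{n\ge2}\tfrac{\cos(n\xi)}{n\sqrt{\log n}}$, and by the standard comparison of a cosine series with monotone, slowly varying coefficients to its partial sums (cf.\ Zygmund's monograph),
\[
|\mathcal{H}\k^\circ(\xi)|\;\sim\;\sum_{2\le n\le 1/|\xi|}\frac1{n\sqrt{\log n}}\;\sim\;2\sqrt{\log\tfrac1{|\xi|}}\qquad(\xi\to0).
\]
Hence $\mathcal{H}\k^\circ\notin L^\infty(\TT)$, and a direct computation (substitute $\xi=e^{-u}$, then Stirling) gives $\int_{|\xi|<\delta_*}\big(\log\tfrac1{|\xi|}\big)^{p/2}\,d\xi\ge\tfrac12\big(\tfrac{p}{2e}\big)^{p/2}$ for all large $p$, so that $|\mathcal{H}\k^\circ|_{L^p(\TT)}\ge c\sqrt p$ for large $p$ (and $\ge c'>0$ for all $p\ge2$).

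Next I will realize this as a curvature by perturbing the unit circle. Working in the arc-length parametrization with $g_0\equiv1$, the remark after Proposition~\ref{prop:equivalence} says a pair $(\k_0,g_0)$ defines a closed curve iff $\int_\TT\k_0\,d\xi=2\pi$ and $\int_\TT(\cos\theta_0,\sin\theta_0)\,d\xi=0$ with $\theta_0(\xi)=\int_0^\xi\k_0$. I will set
\[
\k_0:=1+\varepsilon\,\k^\circ+\delta_1\cos\xi+\delta_2\sin\xi ,\qquad \varepsilon>0 \text{ small}.
\]
The added terms have zero mean, so $\int_\TT\k_0=2\pi$ automatically; they are smooth, so $\mathcal{H}(1+\delta_1\cos\xi+\delta_2\sin\xi)=\delta_1\sin\xi-\delta_2\cos\xi$ is bounded, whence $\mathcal{H}\k_0=\varepsilon\,\mathcal{H}\k^\circ+O(1)$ still satisfies $|\mathcal{H}\k_0|_{L^p(\TT)}\ge c(\varepsilon)\sqrt p$ for large $p$, while $\k_0\in C(\TT)$ is bounded. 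The Jacobian of $(\delta_1,\delta_2)\mapsto\big(\int_\TT\cos\theta_0,\int_\TT\sin\theta_0\big)$ at $\varepsilon=\delta_1=\delta_2=0$ equals $-\pi\,\Id$, so the implicit function theorem provides $\delta(\varepsilon)=O(\varepsilon)$ enforcing the closure condition, and for $\varepsilon$ small $\k_0\ge1-\varepsilon\|\k^\circ\|_{L^\infty(\TT)}-|\delta|>0$. Reconstructing $\g_0$ from $(\k_0,g_0)$ as in the proof of Proposition~\ref{prop:equivalence} yields a closed curve with strictly positive curvature and rotation index $1$, hence convex and in particular simple, which is $C^2$ since $g_0$ is smooth and $\k_0$ continuous; its curvature is the desired $\k_0\in C(\TT)$. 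Feeding this $\g_0$ into the reduction of the first paragraph completes the proof.

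I expect the main obstacle to be the harmonic-analytic input of the second paragraph: one needs simultaneously the continuity of $\k^\circ$ and the precise $(\log\tfrac1{|\xi|})^{1/2}$ rate of blow-up of $\mathcal{H}\k^\circ$ — the more familiar example $\sum_{n\ge2}\tfrac{\sin n\xi}{n\log n}$ is continuous but its conjugate only grows like $\log\log\tfrac1{|\xi|}$, giving $|\mathcal{H}\k^\circ|_{L^p}\sim\log p$, which is too weak for the stated inequality. The geometric closure adjustment and the assembly of the final estimate are then routine.
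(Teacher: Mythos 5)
Your proof is correct and rests on the same underlying mechanism as the paper's: choose a continuous (odd near one point) curvature $\k_0$ whose conjugate function has a $(\log\tfrac1{|\xi|})^{1/2}$ singularity, so that $|\mathcal{H}\k_0|_{L^p(\TT)}\gtrsim\sqrt p$ by a Gamma-function computation, and then read off the lemma from $e^{t\pi\mathcal{H}}=\cos(\pi t)\Id+\sin(\pi t)\mathcal{H}$. The difference is in how this singularity is produced. The paper defines $\k_0$ directly by the explicit local rule $\k_0(\xi)=\sign(\xi)\,(\log\tfrac1{|\xi|})^{-1/2}$ near $\xi=0$, smoothly extends it, and then estimates $\mathcal{H}\k_0$ pointwise by splitting the singular integral and exploiting the odd monotone structure. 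You instead build $\k_0$ from the lacunary-free sine series $\sum_{n\ge2}\frac{\sin n\xi}{n\sqrt{\log n}}$ (continuity via Chaundy--Jolliffe, conjugate asymptotics via the Zygmund comparison of a cosine series with monotone coefficients to $\sum_{n\le1/|\xi|}a_n$). Both constructions are standard and give essentially the same local profile (your $\k^\circ$ also behaves like $\sign(\xi)(\log\tfrac1{|\xi|})^{-1/2}$ near $0$); the Fourier route is more black-box but lets you cite the classical criteria, while the paper's explicit formula keeps the estimate self-contained. Your side remark that $\sum\frac{\sin n\xi}{n\log n}$ only gives $|\mathcal{H}\k^\circ|_{L^p}\sim\log p$ is a correct and useful sanity check on why the $\sqrt{\log n}$ weight is the right one. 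Two points where you go beyond what the paper writes explicitly: (i) the paper only asserts that a $C^2$ simple closed curve with the prescribed $\k_0$ exists by "smoothly extending," whereas your IFT perturbation of the circle (enforcing $\int_\TT(\cos\theta_0,\sin\theta_0)=0$ with the Jacobian $-\pi\,\Id$, and positivity of $\k_0$ for small $\varepsilon$) makes that step rigorous; and (ii) your use of $L^2$--unitarity to handle the regime $t(1-t)\sqrt p\lesssim1$ nails the "$\max\{\cdot,1\}$" part, which the paper leaves implicit. A minor caveat on (ii): the identity \eqref{eq:etH} uses $\mathcal{H}^2=-\Id$, which is only valid on the mean-zero subspace; since $\k_0$ has nonzero mean (total turning $2\pi$), $e^{t\pi\mathcal{H}}$ as given by that formula is not literally unitary on $L^2(\TT)$. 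This does not damage your argument, because $\mathcal{H}$ is skew-adjoint and annihilates constants, so $\langle\k_0,\mathcal{H}\k_0\rangle=0$ and $\lvert\cos(\pi t)\k_0+\sin(\pi t)\mathcal{H}\k_0\rvert_{L^2}^2=\cos^2(\pi t)\lvert\bar\k_0\rvert_{L^2}^2+\lvert\k_0-\bar\k_0\rvert_{L^2}^2\ge\lvert\k_0-\bar\k_0\rvert_{L^2}^2>0$; it is worth stating the lower bound this way rather than invoking unitarity outright.
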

\begin{proof}

It is well-known that the Hilbert transform is not bounded $C ( \TT)  \to L^\infty(\TT)$, and we may pick $ \g_0 \in  C^2$ with $k_0 \in C (\TT)$  such that $| \mathcal{H}  \k_0|_{L^p (\TT)}   \sim \sqrt{p} $  as $ p\to \infty $.
In fact, the rate of divergence in $p$ can be arbitrarily close to $p$ (and exactly $p$ for a function that is just bounded), but we choose a simple explicit $\k_0$ for simplicity.
Let us sketch this argument for the sake of completeness.
For instance, one can take $\ep>0$ small and a curve $\gamma_0$ whose curvature $\k_0 $ satisfies
\begin{equation}\label{eq:k_0_aux1}
 \k_0(\xi )=
\begin{cases}
 (\ln|\xi |^{-1}  )^{-\frac{1}{2}} &\quad \xi \in [0,\ep]\\
- (\ln|\xi |^{-1}  )^{-\frac{1}{2}} &\quad \xi \in [-\ep, 0]
\end{cases}
\end{equation}
Such initial data $\gamma_0 \in C^2(\TT)$ exists since \eqref{eq:k_0_aux1} is a local continuity condition near $\xi=0$. One can take a function $\k_0$ satisfying \eqref{eq:k_0_aux1} and then smoothly extend it on $[\ep, \pi] \cup [-\pi ,-\ep ]$
to obtain a curve $\gamma_0 \in C^2(\TT)$  with $\k_0 \in C(\TT)$ as its curvature.
For any $\xi \in ( 0, \ep/2]$, we have
\[ \pi \H \k_0(\xi) = \int_0^\xi \frac{\k_0(\eta)}{\xi-\eta}\,d\eta + \int_\xi^{2\xi} \frac{\k_0(\eta)}{\xi-\eta}\,d\eta +\int_{-\epsilon}^0 \frac{\k_0(\eta)}{\xi-\eta}\,d\eta+\int_{2\xi}^\epsilon \frac{\k_0(\eta)}{\xi-\eta}\,d\eta +R_H(\xi), \]
where $|R_H(\xi)| \leq C|\k_0|_{L^\infty(\TT)} \ln \epsilon^{-1}.$ Since $\k_0$ is increasing on $[0,\epsilon],$ the sum of the first two terms is negative. The third and fourth terms are also negative.
Therefore, for any $\xi \in  ( 0, \ep/2]$
\[ \pi |\H \k_0(\xi) | \geq   \int_{2\xi}^\epsilon \frac{\k_0(\eta)}{\eta-\xi}\,d\eta -C|\k_0|_{L^\infty(\TT)} \ln \epsilon^{-1} \geq \k_0(2\xi) \ln \xi^{-1} -C|\k_0|_{L^\infty(\TT)} \ln \epsilon^{-1}. \]
Thus by our choice \eqref{eq:k_0_aux1}, there exists a scale $0<\epsilon_1 \leq \epsilon$ so that $|\H \k_0(\xi) | \geq c (\ln \xi^{-1})^{1/2}$ for $\xi \in (0,\epsilon_1]$ (one can take $\epsilon_1 = \epsilon^n$ with sufficiently large $n$
depending on $C$ and $|\k_0|_{L^\infty(\TT)}$). Then
\[ |\H \k_0|_{L^p} \gtrsim \left( \int_0^{\epsilon_1} |\ln \xi^{-1}|^{p/2} d \xi \, \right)^{1/p} = \left( \int_{\ln \epsilon_1}^\infty z^{p/2} e^{-z}\,dz \right)^{1/p}  \gtrsim (\Gamma(p/2))^{1/p} \gtrsim \sqrt{p}. \]
With the initial data $\k_0$ chosen as above, we have
\begin{align*}
e^{t \pi \mathcal{H}} \k_0 = \sum_n \frac{(t \pi\mathcal{H})^n}{n !}\k_0 =  \cos(\pi t ) \k_0 +\sin(\pi t ) \mathcal{H}\k_0,
\end{align*}
and the conclusion follows immediately since $\sin( \pi t) \sim t $ when $t\in(0,\frac{1}{2}]$ and $\sin( \pi t) \sim 1-t $ when $t\in[\frac{1}{2} , 1 ) $.

\end{proof}

These two lemmas, combined with the global $W^{2,p}$ regularity, yield the main $C^2$ and $C^{1,1}$ illposedness result.

\begin{theorem} \label{thm:final_illposedness}
There exists initial data $ \g_0 \in  C^2(\TT)$ which is a simple closed curve with curvature $\k_0 \in C(\TT)$ such that the following holds. The unique solution $\g $ of \eqref{eq:CDE} with initial data $\g_0$ satisfies $ \g\in C ([0,\infty); X_p) $  for all $p< \infty $  and
$$
| \k (t) |_{L^\infty(\TT)}  =\infty \quad \text{for all $t  \in [0,\infty)\setminus \ZZ$}.
$$

\end{theorem}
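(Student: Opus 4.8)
The plan is to take $\g_0$ to be precisely the simple closed $C^2$ curve produced by Lemma~\ref{lemma:linear_for_K}, and to run a contradiction argument at each fixed noninteger time, using the Duhamel formula \eqref{eq:Duhamel_for_K} to transfer the $p$-dependent lower bound for $e^{t\pi\mathcal{H}}\k_0$ onto $\k(t)$. First I would invoke Theorem~\ref{thm:global_sobolev}: since $\g_0\in C^2(\TT)$ lies in $W^{2,p}(\TT)$ for every $p<\infty$ and is a simple closed curve, it lies in $X_p$ for every such $p$, so there is a unique global solution $\g\in C([0,\infty);X_p)\cap C^1([0,\infty);W^{2,p}(\TT))$; by uniqueness these solutions agree for all $p$, which already gives the claimed regularity $\g\in C([0,\infty);X_p)$ for all $p<\infty$.

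The heart of the argument is a contradiction at a fixed noninteger time. Fixing $t_0\in[0,\infty)\setminus\ZZ$, I would suppose $\k(t_0)\in L^\infty(\TT)$ and argue as follows. Setting $T=t_0$, Proposition~\ref{prop:C_1_alpha_regularity} bounds $|g|_*+\Gamma$ on $[0,t_0]$, while the Gronwall estimate inside the proof of Theorem~\ref{thm:global_sobolev} bounds $|\g(t)|_{W^{2,4}(\TT)}$ there; together these yield a constant $M=M(\g_0,t_0)>1$ with $\g\in C([0,t_0];B^M_4)$. Since $4>2$, Theorem~\ref{curveq9122} applies and $\k$ solves \eqref{eq:evo_curvature_illposedness} on $[0,t_0]$, so the renormalized variable $K=e^{-\int_0^t a}\k$ satisfies \eqref{eq:Duhamel_for_K}. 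Because $a\in C^{1/2}(\TT)$ uniformly on $[0,t_0]$ (Lemma~\ref{eq:lemma:Holder_multiplicative_a}), the factor $e^{-\int_0^{t_0}a}$ is bounded, so $\k(t_0)\in L^\infty$ forces $K(t_0)\in L^\infty(\TT)$, hence $|K(t_0)|_{L^p(\TT)}\lesssim |K(t_0)|_{L^\infty(\TT)}$ uniformly in $p$. On the other hand, Lemma~\ref{lemma:nonlinear_for_K} (whose proof uses only the $B^M_4$ bounds \eqref{eq:uniform_illposed_1}--\eqref{eq:uniform_illposed_2}) bounds the Duhamel integral term in $C^{1/2}(\TT)$, hence in $L^p(\TT)$, with a constant $C(M,t_0)$ independent of $p$. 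Rearranging \eqref{eq:Duhamel_for_K} then gives, for every $p<\infty$,
\begin{equation*}
\big| e^{t_0\pi\mathcal{H}}\k_0 \big|_{L^p(\TT)} \le |K(t_0)|_{L^p(\TT)} + \Big| \int_0^{t_0} e^{(t_0-t')\pi\mathcal{H}}\big( -\pi[\mathcal{H}, e^{-\int_0^{t'} a}]\k + \widetilde{F}\big)\, dt' \Big|_{L^p(\TT)} \le C(\g_0, t_0).
\end{equation*}
But \eqref{eq:etH} gives $e^{t_0\pi\mathcal{H}}\k_0=\cos(\pi t_0)\k_0+\sin(\pi t_0)\mathcal{H}\k_0$ with $\sin(\pi t_0)\ne 0$, and the curve chosen in Lemma~\ref{lemma:linear_for_K} has $|\mathcal{H}\k_0|_{L^p(\TT)}\gtrsim\sqrt{p}$, so $|e^{t_0\pi\mathcal{H}}\k_0|_{L^p(\TT)}\ge |\sin(\pi t_0)|\,|\mathcal{H}\k_0|_{L^p(\TT)}-|\k_0|_{L^\infty(\TT)}\to\infty$ as $p\to\infty$, a contradiction. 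Hence $|\k(t_0)|_{L^\infty(\TT)}=\infty$ for every noninteger $t_0$. (The complementary fact that $\g(t)$ is genuinely $C^2$ at integer times comes essentially for free, since $e^{n\pi\mathcal{H}}=(-1)^n\Id$, the Duhamel correction lies in $C^{1/2}(\TT)$, and $a\in C^{1/2}$, so $\k(n)=e^{\int_0^n a}K(n)\in C(\TT)$.)

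The only genuinely delicate point is keeping every constant uniform in $p$, and the main obstacle is the observation that $|\g(t)|_{W^{2,p}(\TT)}$ will in general grow with $p$ (indeed that growth is the whole phenomenon). The way around this is to fix the exponent $p=4$ throughout the Hölder bookkeeping — invoking Propositions~\ref{prop:F_L_holder}, \ref{prop:F_holder} and Lemmas~\ref{eq:lemma:Holder_multiplicative_a}, \ref{lemma:H_holder_commutator} at $p=4$, $\beta=1/2$, $\sigma=3/4$, and tracking $\g$ in a single ball $B^M_4$ on $[0,t_0]$ — so that the $p$ appearing in the conclusion refers only to the norm in which $\k$ is measured. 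Choosing $\g_0\in C^2$ (so $\k_0\in C(\TT)\subset L^4$) is exactly what makes this possible, and Lemmas~\ref{lemma:uniform_illposed} and \ref{lemma:nonlinear_for_K} then supply the required $p$-independent estimates. Everything else — rearranging the Duhamel formula and extending the $t\in(0,1)$ bound of Lemma~\ref{lemma:linear_for_K} to all noninteger times via \eqref{eq:etH} — is routine.
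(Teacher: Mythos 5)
Your argument is correct and follows the same route as the paper: contradiction at a fixed noninteger time, the Duhamel bound from Lemma~\ref{lemma:nonlinear_for_K}, and the $\sqrt{p}$ inflation from Lemma~\ref{lemma:linear_for_K}. You are slightly more explicit than the paper about producing the single ball $B^M_4$ on $[0,t_0]$ via Proposition~\ref{prop:C_1_alpha_regularity} and the Gronwall step, and you handle all noninteger times at once via $\sin(\pi t_0)\neq 0$ rather than reducing to $(0,1)$ as the paper does, but these are minor bookkeeping variations, not a different argument.
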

\begin{proof}
We only demonstrate how to show $| \k (t) |_{L^\infty(\TT)}  =\infty$ for $ 0<t< 1$ as the rest of the cases $t\in \RR\setminus \ZZ$ are similar.

First of all, since $ \g_0 \in  X_\infty$, by Theorem \ref{thm:global_sobolev}, the unique solution $\g   $ belongs to $ C([0,\infty) ;X_p) $  for any $   p < \infty $.

We proceed with a proof by contradiction. Suppose $ | \k (t^*) |_{L^\infty(\TT)} <\infty$ for some $t^* \in  (  0,1)$, then one must have
$$
\limsup_{p \to \infty }| \k (t^* ) |_{L^p(\TT)} <\infty ,
$$
and hence for the new variable $K$ introduced in \eqref{eq:def_K},
\begin{equation}\label{eq:contradiction}
\limsup_{p \to \infty }| K (t^* ) |_{L^p(\TT)} <\infty  .
\end{equation}
Then by \eqref{eq:Duhamel_for_K}, for all $p\geq 4 $ we have  that
\begin{align*}
| K (t^* ) |_{L^p(\TT)} \geq  \left| e^{t^* \pi \mathcal{H} } \k_0  \right|_{L^p(\TT)} - 2\pi \left| \int_0^{t^* } e^{(t^*-t') \pi \mathcal{H} } \left(  -\pi  [  \mathcal{H}, e^{-\int_0^{t'} a}]   \k + \widetilde{F} \right) \, d {t'}  \right|_{L^\infty(\TT)}.
\end{align*}
It follows from  Lemma \ref{lemma:nonlinear_for_K} that there exists a constant $0< C<\infty$ independent of $p$ such that
\begin{align*}
| K (t^*) |_{L^p(\TT)} \geq  \left| e^{t^* \pi \mathcal{H} } \k_0  \right|_{L^p(\TT)} - C \qquad \text{for all $p\geq 4$}.
\end{align*}
which is a contradiction to \eqref{eq:contradiction} by Lemma \ref{lemma:linear_for_K} since $t^*\in(0,1)$.

\end{proof}

\noindent{\bf{Data Availability Statements}:}
Data sharing is not applicable to this article as no datasets were generated or analyzed during the current study.

\noindent{\bf{Conflict of interest}:}
The authors have no competing interests to declare that are relevant to the content of this article.

\appendix

\bibliographystyle{alpha}
\bibliography{euler_patch}

\begin{thebibliography}{KRYZ16}

\bibitem[BC93]{MR1207667}
A.~L. Bertozzi and P.~Constantin.
\newblock Global regularity for vortex patches.
\newblock {\em Comm. Math. Phys.}, 152(1):19--28, 1993.

\bibitem[Ber91]{MR2686124}
Andrea~Louise Bertozzi.
\newblock {\em Existence, uniqueness, and a characterization of solutions to
  the contour dynamics equation}.
\newblock ProQuest LLC, Ann Arbor, MI, 1991.
\newblock Thesis (Ph.D.)--Princeton University.

\bibitem[BL15a]{MR3359050}
Jean Bourgain and Dong Li.
\newblock Strong ill-posedness of the incompressible {E}uler equation in
  borderline {S}obolev spaces.
\newblock {\em Invent. Math.}, 201(1):97--157, 2015.

\bibitem[BL15b]{MR3320889}
Jean Bourgain and Dong Li.
\newblock Strong illposedness of the incompressible {E}uler equation in integer
  {$C^m$} spaces.
\newblock {\em Geom. Funct. Anal.}, 25(1):1--86, 2015.

\bibitem[But89]{doi:10.1063/1.857353}
Thomas~F. Buttke.
\newblock The observation of singularities in the boundary of patches of
  constant vorticity.
\newblock {\em Physics of Fluids A: Fluid Dynamics}, 1(7):1283--1285, 1989.

\bibitem[Che93]{MR1235440}
Jean-Yves Chemin.
\newblock Persistance de structures g\'{e}om\'{e}triques dans les fluides
  incompressibles bidimensionnels.
\newblock {\em Ann. Sci. \'{E}cole Norm. Sup. (4)}, 26(4):517--542, 1993.

\bibitem[CZ54]{MR69310}
A.~P. Calder\'{o}n and A.~Zygmund.
\newblock Singular integrals and periodic functions.
\newblock {\em Studia Math.}, 14:249--271 (1955), 1954.

\bibitem[DM90]{MR1050012}
D.~G. Dritschel and M.~E. McIntyre.
\newblock Does contour dynamics go singular?
\newblock {\em Phys. Fluids A}, 2(5):748--753, 1990.

\bibitem[EM20]{MR4065655}
Tarek~M. Elgindi and Nader Masmoudi.
\newblock {$L^\infty$} ill-posedness for a class of equations arising in
  hydrodynamics.
\newblock {\em Arch. Ration. Mech. Anal.}, 235(3):1979--2025, 2020.

\bibitem[GP21]{MR4235799}
Francisco Gancedo and Neel Patel.
\newblock On the local existence and blow-up for generalized {SQG} patches.
\newblock {\em Ann. PDE}, 7(1):Paper No. 4, 63, 2021.

\bibitem[HKS98]{HKISjcp98}
Thomas~Y. Hou, Isaac Klapper, and Helen Si.
\newblock Removing the stiffness of curvature in computing {$3$}-{D} filaments.
\newblock {\em J. Comput. Phys.}, 143(2):628--664, 1998.

\bibitem[HLS94]{HLSjcp94}
Thomas~Y. Hou, John~S. Lowengrub, and Michael~J. Shelley.
\newblock Removing the stiffness from interfacial flows with surface tension.
\newblock {\em J. Comput. Phys.}, 114(2):312--338, 1994.

\bibitem[KRYZ16]{MR3549626}
Alexander Kiselev, Lenya Ryzhik, Yao Yao, and Andrej Zlato\v{s}.
\newblock Finite time singularity for the modified {SQG} patch equation.
\newblock {\em Ann. of Math. (2)}, 184(3):909--948, 2016.

\bibitem[KTZ92]{MR1145840}
Benjamin~B. Kimia, Allen Tannenbaum, and Steven~W. Zucker.
\newblock On the evolution of curves via a function of curvature. {I}. {T}he
  classical case.
\newblock {\em J. Math. Anal. Appl.}, 163(2):438--458, 1992.

\bibitem[Maj86]{MR861488}
Andrew Majda.
\newblock Vorticity and the mathematical theory of incompressible fluid flow.
\newblock volume~39, pages S187--S220. 1986.
\newblock Frontiers of the mathematical sciences: 1985 (New York, 1985).

\bibitem[MB02]{MR1867882}
Andrew~J. Majda and Andrea~L. Bertozzi.
\newblock {\em Vorticity and incompressible flow}, volume~27 of {\em Cambridge
  Texts in Applied Mathematics}.
\newblock Cambridge University Press, Cambridge, 2002.

\bibitem[MeY16]{MR3451386}
Gerard Misio\l~ek and Tsuyoshi Yoneda.
\newblock Local ill-posedness of the incompressible {E}uler equations in
  {$C^1$} and {$B^1_{\infty,1}$}.
\newblock {\em Math. Ann.}, 364(1-2):243--268, 2016.

\bibitem[MS13]{MR3052498}
Camil Muscalu and Wilhelm Schlag.
\newblock {\em Classical and multilinear harmonic analysis. {V}ol. {I}}, volume
  137 of {\em Cambridge Studies in Advanced Mathematics}.
\newblock Cambridge University Press, Cambridge, 2013.

\bibitem[Mul56]{Mullins56}
W.~W. Mullins.
\newblock Two‐dimensional motion of idealized grain boundaries.
\newblock {\em Journal of Applied Physics}, 27(8):900--904, 1956.

\bibitem[Mv01]{MR1824511}
Karol Mikula and Daniel \v{S}ev\v{c}ovi\v{c}.
\newblock Evolution of plane curves driven by a nonlinear function of curvature
  and anisotropy.
\newblock {\em SIAM J. Appl. Math.}, 61(5):1473--1501, 2001.

\bibitem[SD14]{SD1}
R.~K. Scott and D.~G. Dritschel.
\newblock Numerical simulation of a self-similar cascade of filament
  instabilities in the surface quasigeostrophic system.
\newblock {\em Phys. Rev. Lett.}, 112:144505, Apr 2014.

\bibitem[SD19]{SD2}
R.~K. Scott and D.~G. Dritschel.
\newblock Scale-invariant singularity of the surface quasigeostrophic patch.
\newblock {\em Journal of Fluid Mechanics}, 863:R2, 2019.

\bibitem[Ser94]{MR1270072}
Philippe Serfati.
\newblock Une preuve directe d'existence globale des vortex patches {$2$}{D}.
\newblock {\em C. R. Acad. Sci. Paris S\'{e}r. I Math.}, 318(6):515--518, 1994.

\bibitem[Ste70]{MR0290095}
Elias~M. Stein.
\newblock {\em Singular integrals and differentiability properties of
  functions}.
\newblock Princeton Mathematical Series, No. 30. Princeton University Press,
  Princeton, N.J., 1970.

\bibitem[Ver21]{verdera2021regularity}
Joan Verdera.
\newblock The regularity of the boundary of vortex patches revisited.
\newblock 2021.

\bibitem[Yud62]{MR0163529}
V.~I. Yudovich.
\newblock The flow of a perfect, incompressible liquid through a given region.
\newblock {\em Soviet Physics Dokl.}, 7:789--791, 1962.

\bibitem[ZHR79]{MR524163}
Norman~J. Zabusky, M.~H. Hughes, and K.~V. Roberts.
\newblock Contour dynamics for the {E}uler equations in two dimensions.
\newblock {\em J. Comput. Phys.}, 30(1):96--106, 1979.

\end{thebibliography}

\end{document}